\newcommand\blfootnote[1]{%
  \begingroup
  \renewcommand\thefootnote{}\footnote{#1}%
  \addtocounter{footnote}{-1}%
  \endgroup
}
\newcommand{\BC}{\mathbb{C}}
\newcommand{\BR}{\mathbb{R}}
\newcommand{\BN}{\mathbb{N}}
\newcommand{\bnu}{\boldsymbol{\nu}}
\newcommand{\bz}{\boldsymbol{z}}
\newtheorem{defin}{Definition}
\newtheorem{lemma}{Lemma}
\newtheorem{prop}{Proposition}
\newtheorem{theo}{Theorem}
\newtheorem{corol}{Corollary}
\begin{document}
\title{Maillet type theorem for nonlinear totally characteristic partial differential equations}

\author{{\bf Alberto Lastra$^1$, Hidetoshi Tahara$^2$}\\
\phantom{mmm}\\
$^1$University of Alcal\'{a}, Departamento de F\'{i}sica y Matem\'{a}ticas,\\
Ap. de Correos 20, E-28871 Alcal\'{a} de Henares (Madrid), Spain,\\
{\tt alberto.lastra@uah.es}\\
\phantom{mmm}\\
 $^2$Sophia University, Department of Information and Communication 
       Sciences,\\
Kioicho, Chiyoda-ku, Tokyo 102-8554, Japan.\\
{\tt h-tahara@sophia.ac.jp }}

\date{}
\maketitle

\blfootnote{A. Lastra is partially supported by the project MTM2016-77642-C2-1-P of Ministerio de Econom\'ia y Competitividad, Spain. H. Tahara is partially supported by JSPS KAKENHI Grant
Number 15K04966.}

\begin{abstract}
\noindent The paper discusses a holomorphic nonlinear singular partial 
differential equation
$(t \partial_t)^mu=F(t,x,\{(t \partial_t)^j
     \partial_x^{\alpha}u \}_{j+\alpha \leq m, j<m})$ under the 
assumption that the equation is of nonlinear totally 
characteristic type. By using the Newton Polygon at $x=0$,
the notion of the irregularity at $x=0$ of the equation is defined.
In the case where the irregularity is greater than one, it is proved 
that every formal power series solution belongs to a suitable formal 
Gevrey class. The precise bound of the order of the formal Gevrey 
class is given, and the optimality of this bound is also proved in a 
generic case.
\end{abstract}

\section{Introduction}

In 1903, Maillet \cite{maillet} showed that all formal power
series solutions of nonlinear algebraic ordinary differential 
equations are in some formal Gevrey class (see Definition~\ref{defi12}). In this 
paper, we achieve a Maillet type theorem for general nonlinear 
totally characteristic type partial differential equations.
\par

We first fix some notations, used through the present work. 

\noindent We write $\BN=\{0,1,2,\ldots \}$ and $\BN^*=\{1,2,\ldots \}$. For $m \in \BN^*$, we consider the sets
$I_m=\{(j,\alpha) \in \BN \times \BN \,;\, j+\alpha \leq m, j<m \}$, and $I_m(+)= \{(j,\alpha) \in I_m \,;\, \alpha>0 \}$. The pair $(t,x)$ stands for the variables in $\BC_t \times \BC_x$, and $\bz=\{z_{j,\alpha}\}_{(j,\alpha) \in I_m}$ in $\BC^N$ (with $N=\# I_m=m(m+3)/2$).

\noindent $\BC[[x]]$ denotes the ring of formal power series in $x$, and $\BC[[t,x]]$ denotes the ring of formal power 
series in $(t,x)$. Similarly, $\BC \{x\}$ denotes the ring of 
convergent power series in $x$, and $\BC \{t,x\}$ denotes the ring 
of convergent power series in $(t,x)$. 

\noindent Given $f(x)=\sum_{l \geq 0} f_lx^l \in \BC[[x]]$, we write
$f(x) \gg 0$ if $f_l \geq 0$ for all $l \geq 0$ and $|f|(x)$ denotes the formal power series $\sum_{j \geq 0}|f_j|x^j$.

\noindent For $R>0$ we write $D_R=\{x \in \BC \,;\, |x|<R \}$,
and $\overline{D}_R=\{x \in \BC \,;\, |x| \leq R \}$. We denote by
${\mathcal O}(D_R)$ the set of all holomorphic functions on $D_R$, 
and by ${\mathcal O}(\overline{D}_R)$ the set of all holomorphic 
functions in a neighborhood of $\overline{D}_R$. 

\noindent Given $x\in\mathbb{R}$, we denote $[x]$ the integer part of $x$, and $[x]_+= \max\{x,0 \}$.

Let $F(t,x,\bz)$ be a 
function defined in a polydisk $\Delta$ centered at the origin 
of $\BC_t \times \BC_x \times \BC_{\bz}^N$. In this paper, we consider the following nonlinear  partial differential equation
\begin{equation}\label{e11}
    (t \partial_t)^mu
   =F \bigl(t,x, \{(t \partial_t)^j
        \partial_x^{\alpha}u \}_{(j,\alpha) \in I_m} \bigr)
\end{equation}
under the assumptions
\begin{itemize}
\item[$\left.\hbox{A}_1\right)$] $F(t,x,\bz)$ is holomorphic in $\Delta$,
\item[$\left.\hbox{A}_2\right)$] $F(0,x,\boldsymbol{0}) \equiv 0$ in $\Delta_0=\Delta \cap \{t=0, \bz=\boldsymbol{0} \}$.
\end{itemize}
 
Under the previous assumptions, $F(t,x,\bz)$ can be expressed in the form 
$$
   F(t,x,\bz)=a(x)t + \sum_{(j,\alpha) \in I_m}
             b_{j,\alpha}(x) z_{j,\alpha} + R_2(t,x,\bz)
$$
where $R_2(t,x,\bz)$ is a holomorphic function on $\Delta$ 
whose Taylor expansion in $(t,\bz)$ has the form
\begin{equation}\label{e12}
     R_2(t,x,\bz)= \sum_{i+|\bnu| \geq 2}
         a_{i,\bnu}(x) t^i \bz^{\bnu},
\end{equation}
where $\bnu=\{\nu_{j,\alpha} \}_{(j,\alpha) \in I_m} \in \BN^N$,
$|\bnu|=\sum_{(j,\alpha) \in I_m}\nu_{j,\alpha}$ and 
$\bz^{\bnu}=\prod_{(j,\alpha) \in I_m}
         {z_{j,\alpha}}^{\nu_{j,\alpha}}$.
\par

Different studies have been developed in the study of equation (\ref{e11}), which can be structured into three different blocks:
\begin{itemize}
\item Type 1: $b_{j,\alpha}(x) \equiv 0$ on $\Delta_0$ for any $(j,\alpha) \in I_m(+)$, 
\item Type 2: $b_{j,\alpha}(0) \ne 0$ for some $(j,\alpha) \in I_m(+)$, 
\item Type 3: Cases not considered above.
\end{itemize}

Equation (\ref{e11}) under Type 1 condition deals with the so called nonlinear Fuchsian type 
partial differential equations. It has been studied by several authors such as
Baouendi-Goulaouic~\cite{baouendi}, G\'erard-Tahara~\cite{gt1,gt2}, 
Madi-Yoshino~\cite{yoshino}, Tahara-Yamazawa~\cite{yamazawa} and 
Tahara-Yamane~\cite{yamane}. A Gousart problem appears when considering equations within Type 2: G\'erard-Tahara~\cite{gt3} discussed a particular class of equations in Type 2 and proved the existence of holomorphic solutions and also singular solutions of (\ref{e11}). An equation of the form (\ref{e11}) under the conditions in Type 3 is called a nonlinear totally 
characteristic type partial differential equation. The main thema of this paper is to discuss Type 3 under the following condition:
\begin{itemize}
\item[$\left.\hbox{A}_3\right)$] $b_{j,\alpha}(x)=O(x^{\alpha})$ (as 
          $x \longrightarrow 0$)
     for any $(j,\alpha) \in I_m(+)$.
\end{itemize}
Under this condition, we write $b_{j,\alpha}(x):=x^{\alpha}c_{j,\alpha}(x)$ for some holomorphic 
functions $c_{j,\alpha}(x)$ in a neighborhood of $x=0 \in \BC$.
We set
\begin{align}
    &C(x; \lambda, \rho)= \lambda^m
       -\sum_{(j,\alpha) \in I_m} c_{j,\alpha}(x)
        \lambda^j \rho (\rho-1) \cdots (\rho-\alpha+1), \label{e13}\\
    &L(\lambda, \rho)= C(0; \lambda, \rho). \label{e14}
\end{align}
Then, equation (\ref{e11}) is written in the form
\begin{equation}\label{e15}
    C(x;t \partial_t,x \partial_x)u
    =a(x)t+ R_2\bigl(t,x, \{(t \partial_t)^j
        \partial_x^{\alpha}u \}_{(j,\alpha) \in I_m} \bigr).
\end{equation}
\begin{prop}\label{prop11} Assume that the non-resonance condition
$$
    L(k,l) \ne 0 \quad 
         \mbox{for any $(k,l) \in \BN^* \times \BN$}
    \leqno{\quad \,{\rm (N)}}
$$
is satisfied. Then, equation (\ref{e11}) admits a unique formal power series
solution $u(t,x) \in \BC[[t,x]]$, with $u(0,x) \equiv 0$.
\end{prop}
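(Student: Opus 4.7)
The plan is to look for a formal solution of the form $u(t,x) = \sum_{n \geq 1} u_n(x) t^n$ with $u_n \in \BC[[x]]$, and to determine the $u_n$ recursively by matching coefficients of powers of $t$ on both sides of the rewritten equation (\ref{e15}).

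For the left-hand side, using the Pochhammer identity $x\partial_x(x\partial_x-1)\cdots(x\partial_x-\alpha+1) = x^\alpha \partial_x^\alpha$ together with $(t\partial_t)^j[u_n(x)t^n] = n^j u_n(x) t^n$, I compute
$$C(x; t\partial_t, x\partial_x)\bigl[u_n(x)t^n\bigr] = t^n\, C(x; n, x\partial_x)\, u_n(x).$$
For the right-hand side, since $R_2(t,x,\bz)$ has valuation $\geq 2$ in $(t,\bz)$ and each $(t\partial_t)^j\partial_x^\alpha u$ vanishes at $t=0$, the $t^n$-coefficient of the expression obtained by substituting $u$ into $R_2$ is a polynomial in $\{u_k, \partial_x^\beta u_k\}_{1 \leq k \leq n-1}$ with coefficients in $\BC\{x\}$. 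Denoting this $x$-series by $g_n(x)$ (with $g_1(x) = a(x)$), the equation at order $t^n$ reduces to
\begin{equation*}
  C(x; n, x\partial_x)\, u_n(x) = g_n(x).
\end{equation*}

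To solve this in $\BC[[x]]$ for each fixed $n \geq 1$, I expand $u_n(x) = \sum_{l\geq 0} u_{n,l} x^l$ and $c_{j,\alpha}(x) = \sum_{k \geq 0} c_{j,\alpha,k} x^k$. Since $x^\alpha\partial_x^\alpha x^l = l(l-1)\cdots(l-\alpha+1)x^l$, the coefficient of $x^l$ on the left-hand side takes the form
$$L(n,l)\, u_{n,l} + \Psi_{n,l}(u_{n,0}, \ldots, u_{n,l-1}),$$
where $\Psi_{n,l}$ is a linear expression built from the $c_{j,\alpha,k}$ with $k \geq 1$. Under the non-resonance condition (N), $L(n,l) \ne 0$ for every $(n,l) \in \BN^* \times \BN$, so $u_{n,l}$ is uniquely determined from $g_{n,l}$ and the previously computed $u_{n,l'}$ with $l' < l$. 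Nested induction, first on $n$ and then on $l$, yields the unique formal series $u \in \BC[[t,x]]$.

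The construction is a standard formal double recursion; the one point that requires careful verification is that the $t^n$-coefficient of the substituted right-hand side indeed involves only $u_1, \ldots, u_{n-1}$. This follows from the observation that a monomial $t^i \bz^{\bnu}$ in $R_2$ (with $i + |\bnu| \geq 2$) becomes, after the substitution $z_{j,\alpha} \mapsto (t\partial_t)^j \partial_x^\alpha u$, a series of $t$-valuation at least $i + |\bnu| \geq 2$, so any factor $u_{k_r}$ contributing at order $t^n$ must satisfy $k_r \leq n-1$. No analytic estimate enters here, since only existence in $\BC[[t,x]]$ is claimed; the substantial analytic work of the paper is to quantify the growth of these coefficients, not to construct them.
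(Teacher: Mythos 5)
Your proposal is correct and follows essentially the same route the paper itself takes (implicitly): expanding $u=\sum_{k\ge 1}u_k(x)t^k$, reducing to the recurrences $C(x;k,x\partial_x)u_k=f_k(x)$ with $f_k$ depending only on $u_1,\dots,u_{k-1}$, and solving each of these in $\BC[[x]]$ coefficientwise using $L(k,l)\ne 0$, exactly as in Section~\ref{sec4} and the proof of Proposition~\ref{prop38}. Nothing further is needed.
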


About the convergence of this formal solution, nice results can be found in Chen-Tahara~\cite{chen} and Tahara~\cite{poincare}. In the case where the formal solution is divergent, to measure
the rate of divergence we use the following formal Gevrey classes:
\begin{defin}\label{defi12}
  (i) Let $s \geq 1$, $\sigma \geq 1$.
We say that the formal series
$f(t,x)=\sum_{k \geq 0,l \geq 0}a_{k,l}t^k x^l \in \BC[[t,x]]$
belongs to the formal Gevrey class $G\{t,x \}_{(s,\sigma)}$ 
of order $(s,\sigma)$ if the power series
$$
    \sum_{k \geq 0, l \geq 0}
     \frac{a_{k,l}}{k!^{s-1} l!^{\sigma-1}} t^k x^l 
$$
is convergent in a neighborhood of $(0,0) \in \BC_t \times \BC_x$.
\par
   (ii) Similarly, we say that the formal series
$f(x)=\sum_{l \geq 0}a_{l}x^l \in \BC[[x]]$
belongs to the formal Gevrey class $G\{x \}_{\sigma}$ 
of order $\sigma$ if the power series
$$
    \sum_{l \geq 0}
     \frac{a_{l}}{l!^{\sigma-1}} x^l 
$$
is convergent in a neighborhood of $0 \in \BC_x$.
\end{defin}

Let $u(t,x)$ be the formal solution of (\ref{e11}), whose existence is guaranteed in Proposition~\ref{prop11}, under condition (N). The main aim in the present study is to answer the following natural questions:
\begin{itemize}
\item[a)] Does $u(t,x)$ belong to $G\{t,x \}_{(s,\sigma)}$ for some $(s,\sigma)$ ?
\item[b)] If the answer is affirmative, determine the precise bound of the order
$(s,\sigma)$.
\end{itemize}

    In the case $m=1$, this problem was solved by Chen-Luo-Tahara
\cite{CLT}; in the case $m \geq 2$, Chen-Luo~\cite{CL2} has given 
a partial answer. The purpose of this paper is to give a final result 
in the general case. 
\par
Similar equations and problems are studied also in Chen-Luo~\cite{CL}, Shirai~\cite{shirai1, shirai2} and Yamazawa~\cite{yama}.

The paper is organized as follows. In Section~\ref{sec2}, we describe the construction of the Newton polygon associated to the main equation, and related elements and properties. In Section~\ref{sec24}, we state the two main results of the present work, namely Theorem~\ref{maintheorem}, and Theorem~\ref{teo27}. In Section~\ref{sec3}, we present some preparatory discussions which are needed in the proof of (ii) of Theorem~\ref{maintheorem}. After that, in Section~\ref{sec4} we give a proof of (ii) of Theorem~\ref{maintheorem}, and in Section~\ref{secproofmaintheo} we give a proof of Theorem~\ref{teo27}. In the last section, Section~\ref{sec6}, we give a slight generalization of the above results.

\section{Main result}\label{sec2}

In this section, we first recall the definition of
the Newton polygon ${\mathcal N}_0$ of equation (\ref{e11}) at $x=0$ and the 
generalized Poincar\'e condition (GP), in~\cite{poincare}. Then 
we define a notion of the irregularity $\sigma_0$ of (\ref{e11}) 
at $x=0$. After that, we give our main theorem, and the 
optimality of our condition.

\subsection{On the Newton polygon associated to the main equation}
\par
\medskip
Assume the conditions $\mbox{A}_1)$, $\mbox{A}_2)$ and 
$\mbox{A}_3)$ hold, and define $c_{j,\alpha}(x)$ ($(j,\alpha) \in I_m$) as in~(\ref{e13}).  Set $c_{m,0}(x)=-1$, and 
$$    \Lambda_0 = \{(m,0) \} \cup \{(j,\alpha) \in I_m \,;\,
             c_{j,\alpha}(0) \ne 0 \}.$$
For $(a,b) \in \BR^2$, we write 
$C(a,b)=\{(x,y) \in \BR^2 \,;\, x \leq a, y \leq b \}$.  Then, the
{\it Newton polygon ${\mathcal N}_0$ at $x=0$} of equation (\ref{e11}) 
is defined by the convex hull of the union of sets $C(j, \alpha)$ 
($(j,\alpha) \in \Lambda_0$) in $\BR^2$; that is, 
$$
   {\mathcal N}_0 = \mbox{the convex hull of} 
        \enskip \bigcup_{(j,\alpha) \in \Lambda_0} 
         C(j, \alpha) 
$$
(see Section 2 in~\cite{poincare}). An example of Newton polygon is illustrated in Figure~\ref{Fig1}.

\begin{figure}
	\centering
		\includegraphics{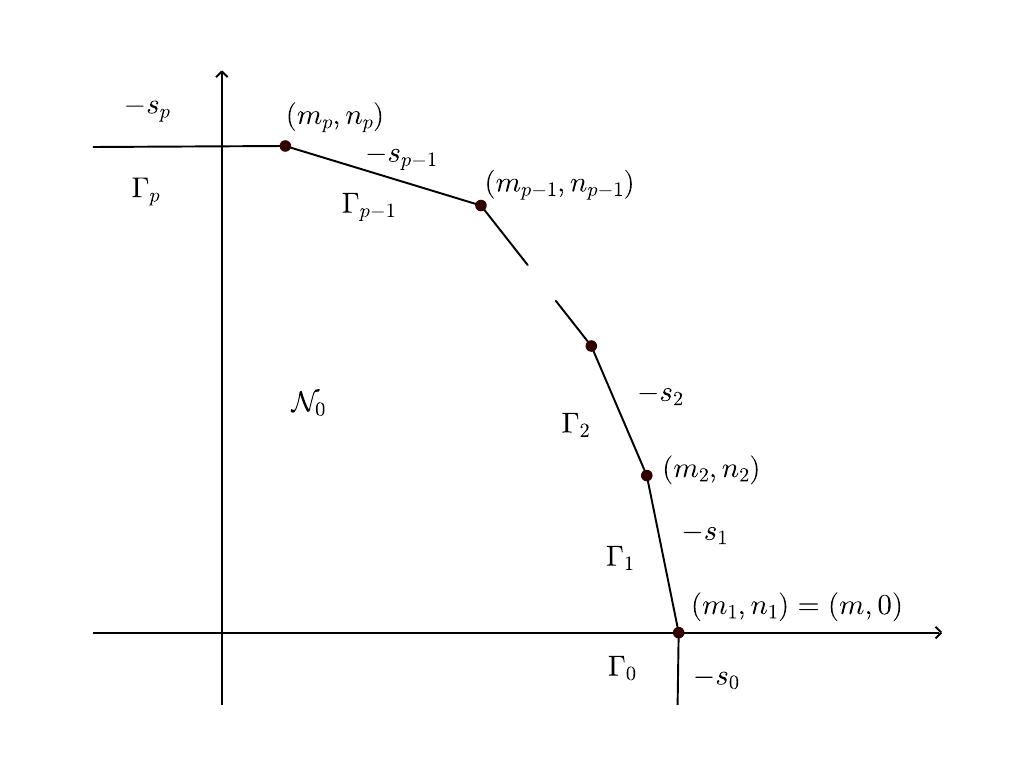}
\caption{Newton polygon ${\mathcal N}_0$ at $x=0$}\label{Fig1}
\end{figure}

   As is seen in Figure~\ref{Fig1}, the vertices of ${\mathcal N}_0$ 
consist of $p$ points
$$
    (m_1,n_1)=(m,0), \enskip (m_2,n_2), 
    \enskip \cdots, \enskip (m_{p-1},n_{p-1}),
    \enskip (m_p,n_p),
$$
and the boundary of ${\mathcal N}_0$ consists of a vertical half 
line $\Gamma_0$, $(p-1)$-segments 
$\Gamma_1, \Gamma_2, \ldots, \Gamma_{p-1}$, and a horizontal 
half line $\Gamma_p$.  We denote the slope of $\Gamma_i$ by 
$-s_i$ ($i=0,1,2,\ldots,p$), and have 
$$
    s_0=\infty >s_1>s_2> \cdots >s_{p-1}>s_p=0.
$$ 
\par
   Let us recall the following definition (see Definition 1 
in~\cite{poincare}).

\begin{defin}\label{defi21}
We say that equation (\ref{e11}) has a regular singularity at $x=0$ if the following condition is 
satisfied:
$$
       \mbox{if $c_{j,\alpha}(0)=0$ and 
          $c_{j,\alpha}(x) \not\equiv 0$, 
               then $(j,\alpha) \in {\mathcal N}_0$}.
        \leqno{\quad \,{\rm (R)}}
$$
Otherwise, that is, if (R) is not satisfied then we say that 
equation (\ref{e11}) has an irregular singularity at $x=0$.
\end{defin}

\subsection{Generalized Poincar\'e condition}\label{sec22}

For $1 \leq i \leq p-1$ we define the characteristic polynomial on 
$\Gamma_i$ by
$$    P_i(X) = \sum_{(j,\alpha) \in \Lambda_0 \cap \Gamma_i}
           c_{j,\alpha}(0) X^{j-m_{i+1}}= c_{m_i,n_i}(0)X^{m_i-m_{i+1}} + \cdots
            + c_{m_{i+1},n_{i+1}}(0).$$
We denote $\lambda_{i,h}$
($1 \leq h \leq m_i-m_{i+1}$) the roots of $P_i(X)=0$ which are 
called the characteristic roots on $\Gamma_i$. In the case 
$i=p$, the characteristic polynomial on $\Gamma_p$ is defined 
by $P_p(X)=1$ if $m_p=0$, and by
$$
    P_p(X)=  \sum_{(j,\alpha) \in \Lambda_0 \cap \Gamma_p}
           c_{j,\alpha}(0) X^{j}
        = c_{m_p,n_p}(0)X^{m_p}+ \cdots, 
          \quad\mbox{if $m_p \geq 1$}.
$$
In the case $m_p \geq 1$, the roots $\lambda_{p,h}$ 
($1 \leq h \leq m_p$) of $P_p(X)=0$ are called the 
characteristic roots on $\Gamma_p$. We define 
{\it the generalized Poincar\'e condition} in the 
following way:
\par
\medskip
   (GP)(Generalized Poincar\'e condition) 
\par
   \quad (i) $\lambda_{i,h} \in \BC \setminus [0,\infty)$
       for all $1 \leq i \leq p-1$ and 
       $1 \leq h \leq m_i-m_{i+1}$, 
\par
   \quad (ii) $\lambda_{p,h} \in \BC \setminus \BN^*$
        for $1 \leq h \leq m_p$.

\medskip
\noindent\textbf{Remark.} For $p=1$, we have ${\mathcal N}_0=\{(x,y) \in \BR^2 \,;\,
x \leq m, y \leq 0 \}$. Therefore, (GP) is reduced to its second statement, and (GP) is equivalent to (N).

\medskip
   We set
\begin{equation}\label{e22}
    \phi(\lambda,\rho)=\sum_{i=1}^p
             \lambda^{m_i} \rho^{n_i}
       = \lambda^m + \lambda^{m_2} \rho^{n_2}
           + \cdots + \lambda^{m_p} \rho^{n_p}. 
\end{equation}

The following results can be found in Proposition 1 and Theorem 2,~\cite{poincare}, respectively.

\begin{lemma}\label{lema22} The following two conditions are equivalent:
\begin{itemize}
\item (N) and (GP) hold.
\item There exists $c_0>0$ such that
\begin{equation}\label{e23}
     |L(k,l)| \geq c_0 \phi(k,l),
\end{equation}		
for every $(k,l) \in \BN^* \times \BN$.
\end{itemize}
\end{lemma}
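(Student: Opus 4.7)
The easier implication (ii) $\Rightarrow$ (i) follows by testing (\ref{e23}) on well-chosen sequences of lattice points. Non-vanishing of $L$ on $\BN^* \times \BN$ is immediate, giving (N). For (GP)(i), along sequences $(k_n, l_n) \to \infty$ in $\BN^* \times \BN^*$ with $k_n / l_n^{s_i} \to \lambda \in [0, \infty)$ on an interior edge $\Gamma_i$, the ratio $L(k_n, l_n)/(k_n^{m_{i+1}} l_n^{n_{i+1}})$ converges to $-P_i(\lambda)$, and (\ref{e23}) forces $P_i(\lambda) \neq 0$. The horizontal edge $\Gamma_p$ (sequences with $l_n$ bounded) similarly yields (GP)(ii).

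For the main direction (i) $\Rightarrow$ (ii), the key identity is
$$\sum_{(j, \alpha) \in \Lambda_0 \cap \Gamma_i} c_{j, \alpha}(0) \, k^j l^\alpha = k^{m_{i+1}} l^{n_{i+1}} P_i(X), \qquad X := k/l^{s_i},$$
valid for $1 \leq i \leq p-1$ (and a horizontal analogue with $X := k$ for $i = p$), obtained from the relation $\alpha = n_{i+1} + s_i (m_{i+1} - j)$ satisfied by every $(j, \alpha)$ on $\Gamma_i$. I would partition $\BN^* \times \BN$ into overlapping regions $\Omega_i$ (one per vertex $(m_i, n_i)$, $1 \leq i \leq p$), defined by $l^{s_i} \lesssim k \lesssim l^{s_{i-1}}$ with $s_0 := \infty$, in which $\phi(k, l) \asymp k^{m_i} l^{n_i}$. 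Inside $\Omega_i$ one factors $L(k, l) = k^{m_{i+1}} l^{n_{i+1}} (P_i(X) + E_i(k, l))$, where the error $E_i$ collects contributions from (a) lattice points of $\Lambda_0$ strictly interior to ${\mathcal N}_0$, (b) the lower-order corrections from the expansion of $\rho(\rho-1)\cdots(\rho-\alpha+1)$ in powers of $\rho$, and (c) edges $\Gamma_j$ with $j \neq i, i-1$. The convexity of ${\mathcal N}_0$ combined with the strict inequalities $s_{i-1} > s_i$ forces $|E_i|$ to be $o(X^{m_i - m_{i+1}})$ as $(k, l) \to \infty$ inside $\Omega_i$.

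Condition (GP)(i) supplies the crucial lower bound $|P_i(X)| \geq c \,(1 + X)^{m_i - m_{i+1}}$ on $X \in [0, \infty)$, since $P_i$ has no nonnegative real roots and behaves like $|c_{m_i, n_i}(0)|\, X^{m_i - m_{i+1}}$ at infinity. Multiplying by $k^{m_{i+1}} l^{n_{i+1}}$ gives $|L(k, l)| \geq c' k^{m_i} l^{n_i} \asymp c' \phi(k, l)$ throughout the asymptotic part of $\Omega_i$. The case $i = p$ replaces (GP)(i) by (GP)(ii) and uses the polynomial growth of $P_p$ on $\BN^*$. The finitely many lattice pairs lying outside the asymptotic regime cause no trouble: (N) ensures pointwise non-vanishing, and a finite set is automatically compact.

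The main obstacle will be the uniform matching of estimates across the regional decomposition: near each edge slope $s_i$, two adjacent vertices contribute comparably to $\phi$, and the dominant behavior of $L$ is governed by the full edge polynomial $P_i$ rather than by either endpoint. One must therefore establish the lower bound $|P_i(X)| \geq c\,(1 + X)^{m_i - m_{i+1}}$ uniformly on the entire ray $[0, \infty)$ (not merely on compact pieces), and quantify the decay rate of $E_i$ using the slope gap $s_{i-1} - s_i > 0$ so that the error is absorbed into the $P_i$-bound uniformly throughout $\Omega_i$.
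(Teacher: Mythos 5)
Your overall plan --- decompose $\BN^*\times\BN$ according to the scalings $k\asymp l^{s_i}$ dictated by the Newton polygon, factor out the edge polynomials $P_i$, bound them below via (GP), and absorb the remaining terms --- is the natural one (the paper itself gives no proof of Lemma~\ref{lema22}; it quotes Proposition~1 of \cite{poincare}, whose argument is of exactly this Newton--polygon type). But as written the proposal has two genuine gaps. First, in the direction (\ref{e23})$\Rightarrow$(GP) your regime for the horizontal edge is the wrong one: with $l_n$ bounded and $k_n\to\infty$ one simply gets $L(k_n,l_n)\sim k_n^m\asymp\phi(k_n,l_n)$, which detects nothing about $P_p$. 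To exclude a root $k_0\in\BN^*$ of $P_p$ you must fix $k=k_0$ and let $l\to\infty$: then $L(k_0,l)=-P_p(k_0)\,l^{n_p}+O(l^{n_p-1})=O(l^{n_p-1})$ while $\phi(k_0,l)\ge k_0^{m_p}l^{n_p}$, contradicting (\ref{e23}). This is fixable, but the statement as given is incorrect.

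Second, and more seriously, in the main direction the claim that $|E_i|=o\bigl(X^{m_i-m_{i+1}}\bigr)$ throughout $\Omega_i=\{\,l^{s_i}\lesssim k\lesssim l^{s_{i-1}}\,\}$ is false, and it is exactly the point you defer to at the end as ``the main obstacle''. Near the upper boundary $k\asymp l^{s_{i-1}}$ (for $i\ge 2$) the contribution of the adjacent vertex alone, $c_{m_{i-1},n_{i-1}}(0)\,k^{m_{i-1}}l^{n_{i-1}}$, has size $\asymp k^{m_i}l^{n_i}=k^{m_{i+1}}l^{n_{i+1}}X^{m_i-m_{i+1}}$, i.e.\ it is comparable to the main term and of uncontrolled phase; moreover your bookkeeping ``(c) edges $\Gamma_j$ with $j\neq i,i-1$'' leaves the $\Gamma_{i-1}$ terms unaccounted for in the factorization altogether. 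The uniform bound $|P_i(X)|\ge c(1+X)^{m_i-m_{i+1}}$ on $[0,\infty)$ (which is correct under (GP)(i)) cannot absorb an error of the same order, so a single edge factorization cannot be stretched across a whole vertex region. The standard repair is a finer decomposition: compact edge windows $\delta\le k/l^{s_i}\le\Delta$, where $|P_i|\ge c>0$ and all error terms (interior points of ${\mathcal N}_0$, the corrections from $\rho(\rho-1)\cdots(\rho-\alpha+1)$ versus $\rho^\alpha$, the other edges) are $o(1)$ relative to $l^{s_im_i+n_i}\asymp\phi$; intermediate vertex windows $\Delta\,l^{s_i}\le k\le\delta\,l^{s_{i-1}}$, where for $\Delta$ large and $\delta$ small the single monomial $c_{m_i,n_i}(0)k^{m_i}l^{n_i}$ strictly dominates every other term of $L$, so no root condition is needed there; the regime of bounded $k$, $l\to\infty$, governed by $P_p$ (here (GP)(ii) only excludes integer roots, which suffices because the relevant variable $X=k$ runs over $\BN^*$); the regime of bounded $l$, where $L\sim k^m$; and the finitely many remaining points, where (N) gives a positive minimum. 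With this matching mechanism spelled out your outline closes; without it the hard implication is not proved.
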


\begin{theo}[Theorem 2 in~\cite{poincare}]\label{teo23}
If (N), (R) and (GP) hold, the unique formal power series 
solution in Proposition~\ref{prop11} is convergent in a neighborhood of
$(0,0) \in \BC_t \times \BC_x$.
\end{theo}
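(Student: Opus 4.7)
The plan is to prove convergence by the method of majorants, working coefficient-by-coefficient on the formal expansion $u(t,x)=\sum_{k\ge 1,\,l\ge 0}u_{k,l}\,t^k x^l$ (the $k=0$ slice vanishes by Proposition~\ref{prop11}). Projecting the rewritten equation (\ref{e15}) onto $t^k x^l$ and expanding $c_{j,\alpha}(x)=\sum_{l'\ge 0}c_{j,\alpha,l'}\,x^{l'}$, one obtains a triangular recursion of the shape
\[
   L(k,l)\,u_{k,l} \;=\; \bigl(\text{linear correction from } c_{j,\alpha}(x)-c_{j,\alpha}(0)\bigr) + \bigl(\text{nonlinear contribution from } R_2\bigr),
\]
which determines each $u_{k,l}$ uniquely via division by $L(k,l)$, nonzero by (N). Lemma~\ref{lema22} supplies the crucial arithmetic lower bound $|L(k,l)|\ge c_0\,\phi(k,l)$ on $\BN^*\times\BN$, so inversion at each step loses at most a factor $1/\phi(k,l)$.

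Next I would set up a majorant equation of the form
\[
   \phi(t\partial_t,x\partial_x)\,U \;=\; |a|(x)\,t \;+\; \Phi\bigl(x,\{(t\partial_t)^j\partial_x^\alpha U\}_{(j,\alpha)\in I_m}\bigr),
\]
where $\Phi$ is assembled from positive majorants of the non-constant parts of the $c_{j,\alpha}$ and of $R_2/c_0$. The point is that $\phi(t\partial_t,x\partial_x)$ acts diagonally on the monomials $t^k x^l$ with eigenvalue $\phi(k,l)$, hence is freely invertible on $t\,\BC[[t,x]]$ with no Gevrey loss, since $\phi$ is polynomial. The main obstacle, and the place where (R) must be used, is the control of the linear corrections coming from $c_{j,\alpha}(x)-c_{j,\alpha}(0)$: these shift $l\mapsto l+l'$ with $l'\ge 1$, and one needs the symbols $k^j\,l(l-1)\cdots(l-\alpha+1)$ to be dominated by $\phi(k,l)$ up to a constant. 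Such a domination holds precisely when $(j,\alpha)\in\mathcal{N}_0$, which is exactly what (R) guarantees for every $(j,\alpha)$ with $c_{j,\alpha}\not\equiv 0$. Combined with the geometric decay of $|c_{j,\alpha,l'}|$ and the elementary monotonicity $\phi(k,l)\le\phi(k,l+l')$, this lets the linear corrections be absorbed as a perturbation of arbitrarily small operator norm on a sufficiently small disk in $x$.

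The argument then closes with a standard implicit-function / contraction-mapping step in a Banach space of holomorphic functions on a small polydisk centered at the origin: after dividing the majorant equation through by $\phi(t\partial_t,x\partial_x)$, the right-hand side maps a small ball into itself and is a contraction there, since $R_2$ is of order $\ge 2$ in $(t,\bz)$ (so its contribution to the majorant is at least quadratic in $U$ and its derivatives), and the linear corrections are controlled by (R) as above. This produces a holomorphic solution $U\in\BC\{t,x\}$ of the majorant equation, and the coefficientwise domination $u\ll U$ built into the construction then yields $u\in\BC\{t,x\}$, proving convergence in a neighborhood of $(0,0)$.
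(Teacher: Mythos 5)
First, note that the paper does not prove this statement at all: Theorem~\ref{teo23} is quoted verbatim as Theorem 2 of~\cite{poincare}, so the only comparison available is with that cited proof and with the closely related machinery the paper deploys in Sections~\ref{sec3}--\ref{sec4}.

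Your sketch is sound in its treatment of the linear part: under (R) every $(j,\alpha)$ with $c_{j,\alpha}\not\equiv 0$ lies in ${\mathcal N}_0$, Proposition~\ref{prop35}(i) gives $k^j l^\alpha\le\phi(k,l)$, and together with Lemma~\ref{lema22} and the factor $x^{l'}$, $l'\ge 1$, the corrections $c_{j,\alpha}(x)-c_{j,\alpha}(0)$ are indeed absorbable as a small perturbation on a small disk. The genuine gap is in the final ``standard contraction in a Banach space of holomorphic functions on a small polydisk''. The nonlinear right-hand side involves $\partial_x^{\alpha}U$ for \emph{all} $(j,\alpha)\in I_m$, and division by $\phi(t\partial_t,x\partial_x)$ does not dominate these symbols: the bound $k^jl^{\alpha}\le C\phi(k,l)$ holds only for $(j,\alpha)\in{\mathcal N}_0$, and nothing forces the arguments of $R_2$ into ${\mathcal N}_0$. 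In the extreme (and typical totally characteristic) case where ${\mathcal N}_0=\{x\le m,\,y\le 0\}$ one has $\phi(k,l)=k^m$, so the inversion yields no gain in $l$ at all; smallness of $U$ in sup norm on a fixed polydisk gives no smallness of $\partial_x^{\alpha}U$ on the same polydisk, so the map you propose is neither a self-map nor a contraction. This loss of $x$-derivatives is precisely the main analytic difficulty, and it is resolved in \cite{chen,poincare} (and mirrored in Section~\ref{sec4} of this paper in the Gevrey setting) by expanding $u=\sum_{k\ge1}u_k(x)t^k$, using that $R_2$ has valuation $\ge 2$ so that $f_k$ depends only on $u_1,\dots,u_{k-1}$, and compensating the derivative loss $\partial_x^{\alpha}u_{k'}$ by majorants whose pole order grows linearly in $k$ (compare Lemma~\ref{lema33}, Corollary~\ref{coro34} and the majorants $Y_k=C_k/(R-x)^{m\sigma_0(3k-2)}$), or equivalently by a Cauchy--Kowalevski type scheme in a \emph{scale} of Banach spaces; this still yields convergence in a full neighborhood of $(0,0)$, but it cannot be replaced by a fixed-point argument in a single space as written. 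A secondary point you also gloss over is the accumulation of the factors $k_h^{j_h}$ in products coming from $R_2$ (the total $\sum_h j_h$ may exceed $m$, so the gain $k^m$ from $\phi$ does not trivially absorb them); handling this requires the kind of combinatorial bookkeeping done in Lemma~\ref{lema42}.
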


\subsection{On the irregularity at $x=0$}\label{sec23}
We set $\Lambda = \{(j,\alpha) \in I_m \,;\, c_{j,\alpha}(x) \not\equiv 0 \}$ and $\Lambda_1 = \{(j,\alpha) \in I_m \,;\, c_{j,\alpha}(0)=0, \,\, c_{j,\alpha}(x) \not\equiv 0 \}$. It holds that $\Lambda \cup \{(m,0)\}=\Lambda_0 \cup \Lambda_1$.
For $(j,\alpha) \in \Lambda_1$ we define
\begin{align*}
    &p_{j,\alpha}= \mbox{the order of the zeros 
              of $c_{j,\alpha}(x)$ at $x=0$} \quad (\geq 1), \\
    &d_{j,\alpha} 
      = \min \{y \in \BR \,;\, (j,\alpha-y) \in {\mathcal N}_0 \}.
\end{align*}
Observe that $p_{j,\alpha}\ge1$ and $d_{j,\alpha}$ is well-defined, for every $(j,\alpha)\in\Lambda_1$. Moreover, one has $(j,\alpha) \in {\mathcal N}_0$ if and only if
$d_{j,\alpha} \leq 0$. We define the {\it irregularity $\sigma_0$ at $x=0$} of (\ref{e11}) by
\begin{equation}\label{e24}
      \sigma_0 
       = \max \Bigl[ \, 1, \, \max_{(j,\alpha) \in \Lambda_1}
         \frac{p_{j,\alpha}+d_{j,\alpha}}{p_{j,\alpha}}
             \Bigr].   
\end{equation}
The reason why we call this ``the irregularity at $x=0$'' is explained by the following lemma:
\begin{lemma}\label{lema24} The regular singularity condition (R) (see Definition~\ref{defi21}) is satisfied if and only if $\sigma_0=1$.
\end{lemma}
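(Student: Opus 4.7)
The proof is a direct unpacking of the definitions, so my plan is to simply chain two equivalences through the intermediate condition ``$d_{j,\alpha}\le 0$ for every $(j,\alpha)\in\Lambda_1$''.

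First, I would reformulate condition (R). By its statement, (R) requires that every $(j,\alpha)$ with $c_{j,\alpha}(0)=0$ and $c_{j,\alpha}(x)\not\equiv 0$ — that is, every $(j,\alpha)\in\Lambda_1$ — lies in $\mathcal{N}_0$. Using the observation made right after the definition of $d_{j,\alpha}$, namely that $(j,\alpha)\in\mathcal{N}_0$ if and only if $d_{j,\alpha}\le 0$, condition (R) becomes
\[
  d_{j,\alpha}\le 0 \quad \text{for every } (j,\alpha)\in\Lambda_1.
\]
(If $\Lambda_1=\emptyset$, the condition is vacuous and (R) automatically holds.)

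Next, I would reformulate $\sigma_0=1$. By (\ref{e24}), $\sigma_0\ge 1$ always. The equality $\sigma_0=1$ is equivalent to
\[
  \frac{p_{j,\alpha}+d_{j,\alpha}}{p_{j,\alpha}}\le 1 \quad \text{for every } (j,\alpha)\in\Lambda_1,
\]
with the same vacuous-case convention. Since $p_{j,\alpha}\ge 1>0$, clearing denominators shows this is equivalent to $d_{j,\alpha}\le 0$ for every $(j,\alpha)\in\Lambda_1$.

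Combining the two reformulations yields the claimed equivalence. There is no real obstacle here: the only subtlety worth mentioning explicitly is that $p_{j,\alpha}$ is a strictly positive integer (guaranteed because $(j,\alpha)\in\Lambda_1$ means $c_{j,\alpha}$ has a zero of order at least $1$ at the origin), which is what allows the inequality $(p_{j,\alpha}+d_{j,\alpha})/p_{j,\alpha}\le 1$ to be rewritten as $d_{j,\alpha}\le 0$ without changing direction. I would therefore keep the proof to three short lines, essentially the two equivalences above joined by a transitivity remark.
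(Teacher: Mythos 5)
Your proof is correct and follows exactly the route the paper intends: the paper states the lemma without a formal proof, regarding it as an immediate consequence of the observation made just before (\ref{e24}) that $(j,\alpha)\in{\mathcal N}_0$ if and only if $d_{j,\alpha}\le 0$, which is precisely the pivot of your argument. Your chaining of (R) $\Leftrightarrow$ ($d_{j,\alpha}\le 0$ for all $(j,\alpha)\in\Lambda_1$) $\Leftrightarrow$ ($\sigma_0=1$), using $p_{j,\alpha}\ge 1$ to clear denominators, is exactly what is needed, including the vacuous case $\Lambda_1=\emptyset$.
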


\subsection{Main results}\label{sec24}
Given a formal power series $f(t,\bz)= \sum_{i+|\bnu| \geq 0} f_{i,\bnu} t^i \bz^{\bnu}\in \BC [[t,\bz]]$,
we define the {\it valuation $\mathrm{val}(f)$ of $f(t,\bz)$} by 
$$ \mathrm{val}(f)= \min \{ i+|\bnu| \,;\, f_{i,\bnu} \ne 0 \}.$$
If $f(t,\bz) \equiv 0$ we set $\mathrm{val}(f)=\infty$. The previous definition is naturally extended to a holomorphic function defined in a neighborhood of 
$(0,0) \in \BC_t \times \BC_{\bz}^N$ by means of its Taylor expansion at the origin.

Let $\sigma_0$ be the irregularity at $x=0$ of (\ref{e11}), and let
$R_2(t,x,\bz)$ be as in (\ref{e12}). We put
$$
    L_{\mu,j,\alpha} = \mathrm{val} \bigl(
          ( \partial_{z_{j,\alpha}}
           \partial_x^{\, \, \mu} R_2)(t,0,\bz) \bigr), 
       \quad  \mu \in \BN, \enskip 
                              (j,\alpha) \in I_m
$$
and set
\begin{equation}\label{e25}
     s_0= 1 + \max \biggl[ \, 0, \, \max_{0 \leq \mu<m}
          \Bigl( \max_{(j,\alpha) \in I_m}
              \frac{j+\mu+ \sigma_0(\alpha-\mu)-m}
                   {L_{\mu,j,\alpha}} \Bigr) 
           \biggr].
\end{equation}

\begin{theo}[Main Theorem]\label{maintheorem} Assume the conditions
$\mbox{A}_1)$, $\mbox{A}_2)$, $\mbox{A}_3)$, (N) and (GP) hold. Let 
$u(t,x) \in \BC[[t,x]]$ be the unique formal solution of (\ref{e11}) 
satisfying $u(0,x) \equiv 0$. Then, the following results hold:
\begin{itemize}
\item[(i)] If $\sigma_0 =1$, then $u(t,x)$ is convergent in a neighborhood of
$(0,0) \in \BC_t \times \BC_x$.
\item[(ii)] If $\sigma_0>1$, then $u(t,x) \in G\{t,x \}_{(s,\sigma)}$ 
for any $s \geq s_0$ and $\sigma \geq \sigma_0$.
\end{itemize}
\end{theo}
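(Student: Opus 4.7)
Part (i) follows immediately: by Lemma~\ref{lema24}, $\sigma_0 = 1$ is equivalent to the regular singularity condition (R), and then Theorem~\ref{teo23} gives the convergence of the formal solution. All the substance of the theorem lies in part (ii), and my plan for it is to derive a recursion for the coefficients of $u(t,x) = \sum_{k \geq 1, l \geq 0} u_{k,l} t^k x^l$ (the sum starts at $k \geq 1$ because $u(0,x) \equiv 0$), and then run a majorant argument in a Gevrey-weighted norm using the key estimate from Lemma~\ref{lema22}.

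Substituting the ansatz into (\ref{e15}) and identifying the coefficient of $t^k x^l$ produces a recursion
\[
L(k,l)\, u_{k,l} = A_{k,l}\bigl(\{u_{k',l'}\}\bigr),
\]
where the right-hand side $A_{k,l}$ collects three contributions: the ``error term'' $(L(k,l) - C(x;k,l))\, u$, which involves only $u_{k,l'}$ with $l' < l$ since each $c_{j,\alpha}(x) - c_{j,\alpha}(0) = O(x)$; the inhomogeneity $a(x) t$, which contributes only at $k = 1$; and the nonlinear term $R_2$, which involves only $u_{k',l'}$ with $k' + l' < k + l$. Under (N) and (GP), Lemma~\ref{lema22} yields $|L(k,l)| \geq c_0 \phi(k,l)$, so
\[
|u_{k,l}| \leq \frac{|A_{k,l}|}{c_0\, \phi(k,l)}.
\]
The remainder of the proof is an induction on $k + l$ using this estimate.

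Fix $s \geq s_0$ and $\sigma \geq \sigma_0$, and consider the weighted series
\[
v(t,x) = \sum_{k \geq 1,\, l \geq 0} \frac{|u_{k,l}|}{(k!)^{s-1}(l!)^{\sigma-1}}\, t^k x^l,
\]
whose convergence in a neighborhood of the origin is precisely the statement $u \in G\{t,x\}_{(s,\sigma)}$. The strategy is to construct a holomorphic $\Phi(t,x,w) \gg 0$ with $\Phi(0,x,0) \equiv 0$ so that $v \ll V$, where $V$ is the unique holomorphic solution of the functional equation $V = \Phi(t,x,V)$ produced by the analytic implicit function theorem. The weights must absorb two sources of growth coming from the recursion. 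First, the factor $\sim l^{\alpha}$ produced by $\partial_x^{\alpha} u$ must be offset, after division by $\phi(k,l)$, against the weight ratio $l!^{\sigma-1}/(l+\alpha)!^{\sigma-1}$; this works exactly when $\sigma \geq \sigma_0$, by the definition of $\sigma_0$ through $p_{j,\alpha}$ and $d_{j,\alpha}$. Second, the shift in $t$-degree and the factor $k^{j}$ coming from $(t\partial_t)^j$ in a monomial $a_{i,\bnu}(x)t^i \bz^{\bnu}$ of $R_2$ must be offset against $k!^{s-1}/(k+i)!^{s-1}$ together with the correct $x$-degree; this works precisely when $s \geq s_0$, by the definition (\ref{e25}) via the valuations $L_{\mu,j,\alpha}$.

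The principal obstacle is the bookkeeping that converts these pointwise weight-ratio estimates into a concrete, convergent majorant equation. The crucial technical ingredient, which I expect Section~\ref{sec3} to develop, is a comparison between $\phi(k,l)$ and monomials $k^{j'} l^{\alpha'}$ associated with Newton-polygon vertices in $\Lambda_0$: one needs to show that for every $(j,\alpha) \in \Lambda$, the quotient $k^{j} l^{\alpha}/\phi(k,l)$ is bounded by a sum of terms of the form $l^{\alpha'}$ with $\alpha' \leq d_{j,\alpha}$, so that the additional factor $x^{p_{j,\alpha}}$ hidden in $c_{j,\alpha}(x) = O(x^{p_{j,\alpha}})$ for $(j,\alpha) \in \Lambda_1$, combined with the Gevrey weight $(l!)^{\sigma_0-1}$, suffices to make everything holomorphic. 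Once this combinatorial comparison and the corresponding estimates for $R_2$ (using $L_{\mu,j,\alpha}$) are in place, the fixed-point step assembling $\Phi$ and applying the implicit function theorem should proceed along standard lines in Section~\ref{sec4}.
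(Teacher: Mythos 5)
Your part (i) is exactly the paper's argument (Lemma~\ref{lema24} plus Theorem~\ref{teo23}), so that much is fine. For part (ii), however, your outline has two concrete problems. First, the induction you propose is not well-founded: the monomials $a_{i,\bnu}(x)t^i\bz^{\bnu}$ of $R_2$ are composed with $\partial_x^{\alpha}u$ \emph{without} a compensating factor $x^{\alpha}$, so the coefficient $u_{k,l}$ depends on coefficients $u_{k',l'}$ with $k'<k$ but $l'>l$, and an induction on $k+l$ does not close. The correct ordering (and the one the paper uses) is an outer induction on the $t$-degree $k$, solving at each stage the linear equation $C(x;k,x\partial_x)u_k=f_k$ for the whole series $u_k(x)$, which is triangular in $l$ only because of A${}_3$); this is Proposition~\ref{prop38}, whose quantitative output (the estimate through the truncated Borel operator ${\mathcal B}^{(m)}_{\sigma}$, resting on Proposition~\ref{prop35}) is where $\sigma\geq\sigma_0$ actually enters.

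Second, and more seriously, the fixed-point step you appeal to cannot proceed along ``standard lines'': a majorant equation $V=\Phi(t,x,V)$ with $\Phi$ holomorphic in $(t,x,V)$ cannot see the operators $\partial_x^{\alpha}$ acting on the unknown inside the nonlinearity, so before any implicit-function-theorem argument one must trade every $x$-derivative loss for Gevrey weight in a way compatible with products of factors whose $t$-degrees split. That trade is precisely the technical core of the paper: the Nagumo-type Lemma~\ref{lema33} and Corollary~\ref{coro34} (each $x$-derivative beyond the truncation costs a factor $(R-x)^{-\sigma_0}$), the reformulation of $s_0$ in Lemma~\ref{lema39} via $m_{\bnu,\mu}$, the factorial bookkeeping of Lemma~\ref{lema42}, and the tuned majorant coefficients $Y_k=C_k(R-x)^{-m\sigma_0(3k-2)}$ in the induction of Lemma~\ref{lema41}; in particular the exponent $\mu+\sigma_0(\alpha-\mu)$ in $s_0$ arises exactly from the interplay of the $x^{\mu}$ Taylor block of $a_{i,\bnu}$ with $\partial_x^{\alpha}$, which your sketch compresses into ``together with the correct $x$-degree''. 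You name the right ingredients ($|L(k,l)|\geq c_0\phi(k,l)$, the Newton-polygon comparison through $p_{j,\alpha}$ and $d_{j,\alpha}$, the valuations $L_{\mu,j,\alpha}$), but everything you defer as bookkeeping is where the proof lives, so as written the attempt establishes only part (i).
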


Since $\sigma_0=1$ is equivalent to condition (R) (see Lemma~\ref{lema24}), the first part in the previous result is a known fact, which can be found in Theorem~\ref{teo23}. In the case $m=1$, the 
second statement of the previous result was proved by H. Chen, Z. Luo and the second author~\cite{CLT}. The proof of such statement under general settings is put forward in Section~\ref{sec4}. It is worth mentioning that indices close to $\sigma_0$ and $s_0$ are defined in the work by H. Chen and Z. Luo~\cite{CL2}.

For $0 \leq \mu<m$ we set 
$$
     I_{m,\mu}= \bigl\{(j,\alpha) \in I_m \,;\, \alpha >\mu, 
      \, (\partial_{z_{j,\alpha}}
     \partial_x^{\, \, \mu}R_2)(t,0,\bz) \not\equiv 0 \bigr\}.
$$

The next result is a direct consequence of Theorem~\ref{maintheorem}.

\begin{corol}\label{coro26} Under the assumption
\begin{equation}\label{e26}
    \sigma_0 \leq \frac{m-j-\mu}{\alpha-\mu} \quad  
     \mbox{for $0 \leq \mu <m$ and $(j,\alpha) \in I_{m,\mu}$}
\end{equation}
one has $u(t,x) \in G\{t,x \}_{(1,\sigma_0)}$.
\end{corol}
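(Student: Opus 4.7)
The plan is to deduce this corollary directly from the Main Theorem by verifying that the stated hypothesis forces $s_{0}=1$; part (ii) of Theorem~\ref{maintheorem} applied with $s=s_{0}=1$ and $\sigma=\sigma_{0}$ then yields the desired conclusion. First I would dispose of the trivial case $\sigma_{0}=1$: by Lemma~\ref{lema24} this is equivalent to condition~(R), so Theorem~\ref{teo23} gives that $u(t,x)$ is convergent and hence certainly lies in $G\{t,x\}_{(1,1)}=G\{t,x\}_{(1,\sigma_{0})}$.

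Assume now $\sigma_{0}>1$. Since $s_{0}\geq 1$ by the outer $\max$ with $0$ in~(\ref{e25}), it suffices to prove $s_{0}\leq 1$, that is,
\[
j+\mu+\sigma_{0}(\alpha-\mu)-m \leq 0
\]
for every $0\leq \mu<m$ and every $(j,\alpha)\in I_{m}$ with $L_{\mu,j,\alpha}<\infty$. I would split this according to the sign of $\alpha-\mu$.

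If $\alpha\leq \mu$, then $\alpha-\mu\leq 0$ combined with $\sigma_{0}\geq 1$ gives $\sigma_{0}(\alpha-\mu)\leq \alpha-\mu$, whence
\[
j+\mu+\sigma_{0}(\alpha-\mu)-m \leq j+\alpha-m \leq 0,
\]
the last inequality being just the defining condition of $I_{m}$. If instead $\alpha>\mu$, then $L_{\mu,j,\alpha}<\infty$ forces $(\partial_{z_{j,\alpha}}\partial_{x}^{\mu}R_{2})(t,0,\bz)\not\equiv 0$, so $(j,\alpha)\in I_{m,\mu}$; multiplying the hypothesis $\sigma_{0}\leq (m-j-\mu)/(\alpha-\mu)$ through by the positive quantity $\alpha-\mu$ rearranges to precisely the required inequality.

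These two subcases exhaust the possibilities, so $s_{0}=1$ and the Main Theorem delivers $u(t,x)\in G\{t,x\}_{(1,\sigma_{0})}$. The argument is essentially a bookkeeping exercise comparing the numerator in~(\ref{e25}) against the hypothesis; there is no serious obstacle, since all of the analytic difficulty is absorbed in the proof of Theorem~\ref{maintheorem} itself. The only point requiring any care is the case $\alpha\leq\mu$, where one must use both $\sigma_{0}\geq 1$ and $(j,\alpha)\in I_{m}$ to control the numerator.
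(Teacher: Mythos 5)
Your proof is correct and follows the route the paper intends: the paper states Corollary~\ref{coro26} as a direct consequence of Theorem~\ref{maintheorem}, and your verification that hypothesis (\ref{e26}) forces every numerator $j+\mu+\sigma_0(\alpha-\mu)-m$ in (\ref{e25}) to be nonpositive (splitting on $\alpha\leq\mu$ versus $\alpha>\mu$, the latter being exactly membership in $I_{m,\mu}$ when $L_{\mu,j,\alpha}<\infty$) gives $s_0=1$, after which part (ii) of the Main Theorem (or part (i) in the trivial case $\sigma_0=1$) yields $u(t,x)\in G\{t,x\}_{(1,\sigma_0)}$. This is essentially the same argument, just written out in detail.
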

%\begin{proof}
%The result follows directly from Theorem~\ref{maintheorem}. If $0 \leq \alpha \leq \mu$, we have
%$j+\mu+ \sigma_0(\alpha-\mu)-m = j+\alpha-(\sigma_0-1)(\mu-\alpha)-m\leq j+\alpha-m \leq 0$. 
%If $\alpha>\mu$ and $L_{\mu,j,\alpha}<\infty$, by (\ref{e26}) we have 
%$j+\mu+ \sigma_0(\alpha-\mu)-m \leq 0$. 
%If $\alpha>\mu$ and $L_{\mu,j,\alpha} = \infty$, we have
%$(j+\mu+ \sigma_0(\alpha-\mu)-m)/L_{\mu,j,\alpha}=0$.
%Hence, by (\ref{e25}) we have $s_0=1$. 
%\end{proof} 

Corollary~\ref{coro26} implies that the unique formal power series 
solution $u(t,x)$ of (\ref{e11}) is holomorphic in the variable $t$.

The following theorem asserts that our condition in Theorem~\ref{maintheorem} 
is optimal in a generic case.

\begin{theo}[Optimality]\label{teo27} Assume the conditions
$\mbox{A}_1)$, $\mbox{A}_2)$ and $\mbox{A}_3)$ hold. In addition to that, we adopt the following conditions:
\begin{itemize}
\item[$c_1)$] $(\partial_x^{\,\,\mu}a)(0)>0$ for $0 \leq \mu \leq m$ and $a(x) \gg 0$,
\item[$c_2)$] $c_{j,\alpha}(0) \leq 0$  for every $(j,\alpha) \in I_m$, 
\item[$c_3)$] $c_{j,\alpha}(x)-c_{j,\alpha}(0) \gg 0$ for every $(j,\alpha) \in I_m$, 
\item[$c_4)$] $a_{i,\bnu}(x) \gg 0$ for all $(i,\bnu) \in \BN \times \BN^N$ with $i+|\bnu| \geq 2$.
\end{itemize}
Then, equation (\ref{e11}) has a unique formal solution 
$u(t,x) \in \BC[[t,x]]$ satisfying $u(0,x) \equiv 0$. Moreover, $u(t,x)\in G\{t,x \}_{(s,\sigma)}$ if and only if $(s,\sigma)$ is such that $s \geq s_0$ and $\sigma \geq \sigma_0$.
\end{theo}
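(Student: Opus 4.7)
My plan splits the theorem into the existence/uniqueness statement (which requires verifying (N) and (GP)), the positivity of the coefficients, and the ``if'' and ``only if'' parts of the Gevrey characterization. Under hypothesis $c_2)$ together with the convention $c_{m,0}(x)\equiv -1$, for every $(k,l)\in\BN^*\times\BN$ one has
\[
   L(k,l)=k^m-\sum_{(j,\alpha)\in I_m}c_{j,\alpha}(0)\,k^j\,l(l-1)\cdots(l-\alpha+1)\ge k^m\ge 1,
\]
because $-c_{j,\alpha}(0)\ge 0$ and $l(l-1)\cdots(l-\alpha+1)\ge 0$. An analogous sign check shows $P_i(X)<0$ on $[0,\infty)$ for every edge $\Gamma_i$ of the Newton polygon, so (GP) holds as well. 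Proposition~\ref{prop11} then provides the unique formal solution $u=\sum_{k\ge 1,l\ge 0}u_{k,l}\,t^kx^l$, and the ``if'' direction follows from Theorem~\ref{maintheorem}. Matching the coefficient of $t^kx^l$ in (\ref{e15}) gives a recurrence of the schematic form $L(k,l)\,u_{k,l}=A_{k,l}+\sum\gamma_{k',l'}u_{k',l'}+\sum\delta_{\cdot\cdot\cdot}\prod_r u_{k_r,l_r}$, whose constants are all nonnegative by $c_1)-c_4)$ after expanding $c_{j,\alpha}(x)=c_{j,\alpha}(0)+(c_{j,\alpha}(x)-c_{j,\alpha}(0))$; induction on $k+l$ gives $u_{k,l}\ge 0$. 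The task therefore reduces to the ``only if'' direction.

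For $\sigma\ge\sigma_0$, extract the coefficient of $t^1$ in (\ref{e15}): writing $v_1(x):=\sum_{l\ge 0}u_{1,l}x^l$, one obtains the inhomogeneous linear ODE
\[
   v_1(x)-\sum_{(j,\alpha)\in I_m}c_{j,\alpha}(x)\,x^\alpha\,\partial_x^\alpha v_1(x)=a(x).
\]
This ODE has an irregular singularity at $x=0$, and inspection of the Newton polygon of its coefficients (the pairs $(\alpha,\alpha+p_{j,\alpha})$ for $(j,\alpha)\in\Lambda_1$ against the vertices of ${\mathcal N}_0$) identifies its classical Gevrey order with precisely $\sigma_0$ as defined in (\ref{e24}). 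Since $u\in G\{t,x\}_{(s,\sigma)}$ implies $v_1\in G\{x\}_\sigma$, the required bound will come from a matching \emph{lower} estimate on $u_{1,l}$. Pick $(j_*,\alpha_*)\in\Lambda_1$ attaining $(p_{j_*,\alpha_*}+d_{j_*,\alpha_*})/p_{j_*,\alpha_*}=\sigma_0$; because every term in the recursion for $v_1$ is now nonnegative (by $c_1)-c_3)$), iterating $n$ times the single positive operator $c_{j_*,\alpha_*}(x)\,x^{\alpha_*}\,\partial_x^{\alpha_*}$ against the source $a(x)\gg 0$ produces, along an arithmetic progression $l_n\to\infty$,
\[
   u_{1,l_n}\ge c_0\,A_0^{l_n}\,(l_n!)^{\sigma_0-1},
\]
which forces $\sigma\ge\sigma_0$.

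For $s\ge s_0$, assume $s_0>1$ and choose $(\mu_*,j_*,\alpha_*)$ realising the maximum in (\ref{e25}); by $c_4)$ and the very definition of $L_{\mu_*,j_*,\alpha_*}$ there exists a monomial $\gamma\,x^{\mu_*}t^{i_*}\bz^{\bnu_*}$ in the Taylor expansion of $R_2$ with $\gamma>0$, $i_*+|\bnu_*|-1=L_{\mu_*,j_*,\alpha_*}$, and with the exponent of $z_{j_*,\alpha_*}$ in $\bz^{\bnu_*}$ at least one. Feeding the lower bound of the previous paragraph back into the full recursion through this single extremal monomial, after $n$ iterations one produces a sequence $(k_n,l_n)$ with $k_n\to\infty$ satisfying
\[
   u_{k_n,l_n}\ge c\,A^{k_n+l_n}\,(k_n!)^{s_0-1}\,(l_n!)^{\sigma_0-1}.
\]
The arithmetic identity $j_*+\mu_*+\sigma_0(\alpha_*-\mu_*)-m=(s_0-1)\,L_{\mu_*,j_*,\alpha_*}$, which is exactly the equality case of (\ref{e25}), is what makes the Gevrey exponents in $t$ and $x$ balance simultaneously. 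Any $u\in G\{t,x\}_{(s,\sigma)}$ would give $u_{k,l}\le CR^{k+l}(k!)^{s-1}(l!)^{\sigma-1}$, contradicting the display above as soon as $s<s_0$.

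The decisive difficulty is the iterative step in the last paragraph. The PDE recursion for $u_{k,l}$ is a convolution over \emph{all} monomials of $R_2$, so one cannot drop terms cavalierly; one must design an explicit self-reproducing scheme in which the extremal monomial is used at every stage, track the combinatorial factors produced by $(t\partial_t)^{j_*}\partial_x^{\alpha_*}$ exactly, and verify that division by $L(k_n,l_n)\asymp\phi(k_n,l_n)$ from Lemma~\ref{lema22} does not damp the growth below $(k_n!)^{s_0-1}(l_n!)^{\sigma_0-1}$. The positivity granted by $c_1)-c_4)$ is indispensable here, since it reduces the whole argument to a pure minorant one: all contributions not used in the extremal chain can only increase $u_{k,l}$. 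A secondary, milder point is the sharpness of the classical Maillet theorem for the linear ODE above, which is a standard minorant argument under positive data.
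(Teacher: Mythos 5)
Your verification of (N), the positivity of the formal solution, and the ``if'' direction are fine (your way of checking (GP) through the sign of the characteristic polynomials $P_i$ on $[0,\infty)$ is a legitimate variant of the paper's direct estimate $L(k,l)\geq c\,\phi(k,l)$). The genuine gap is in the ``only if'' direction, and it starts with the $\sigma$-part: the coefficient of $t^1$ does \emph{not} detect $\sigma_0$ in general. The irregularity (\ref{e24}) is measured by the vertical distance $d_{j,\alpha}$ from $(j,\alpha)$ to the two-dimensional Newton polygon ${\mathcal N}_0$ at abscissa $j$, whereas in the ODE $C(x;1,x\partial_x)u_1=a(x)$ every power $(t\partial_t)^j$ has been replaced by $1$, so the only comparison available is against $\phi(1,l)\sim l^{n_p}$, i.e.\ against the horizontal part of the polygon. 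Concretely, in the paper's own example (\ref{e27}) one has $\sigma_0=2$, yet the $t^1$-equation reads $(1+(x\partial_x)^2-x(x\partial_x)^2)u_1=a(x)$, whose coefficient recursion $(1+l^2)u_{1,l}=a_l+(l-1)^2u_{1,l-1}$ produces a \emph{convergent} $u_1$; your claimed lower bound $u_{1,l_n}\gtrsim(l_n!)^{\sigma_0-1}$ is false there. The divergence rate $(l!)^{\sigma_0-1}$ only appears along sequences in which the $t$-degree grows together with the $x$-degree, $k\sim l^{s_i}$ with $-s_i$ the slope of the relevant edge; this is exactly what the paper's reduction to the three-term model equation (\ref{e54}) and Steps 2--5 of Section~\ref{secproofmaintheo} (via Lemma~\ref{lema52}(iii) and Lemma~\ref{lema59}) are designed to capture, and it is not recoverable from $u_1$ alone.

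The $s$-part of your argument then inherits this defect, since it feeds on the (unavailable) lower bound for $u_{1,l}$, and it has a second problem of its own: the lower bound you aim for, $u_{k_n,l_n}\gtrsim (k_n!)^{s_0-1}(l_n!)^{\sigma_0-1}$ with \emph{both} indices tending to infinity, does not by itself force $s\geq s_0$, because for $s<s_0$ and $\sigma$ very large the admissible upper bound $C R^{k+l}(k!)^{s-1}(l!)^{\sigma-1}$ can dominate your minorant unless $l_n$ grows strictly sublinearly in $k_n$. The paper avoids this by a two-phase scheme in Proposition~\ref{prop56}: first the $B$-chain builds the $x$-divergence in the coefficients $A_{k,lp+d_k}$, then the $C$-chain (Steps 6--8) converts it into $t$-divergence while \emph{descending} in the $x$-degree to the bounded value $d_{k_l}\leq m+p$, so that the final contradiction isolates $s$; the conversion rate is what produces the exponent $K=j+\alpha+(d/p)(\alpha-\mu)-m$. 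You correctly identify the iterative step as ``the decisive difficulty,'' but that step is precisely what is missing: without the model-equation reduction, the choice $k_l=[(\,\cdot\,)^{s_i}]$, and the bounded-$x$-degree endgame, the proposal does not yield the necessity of $s\geq s_0$ and $\sigma\geq\sigma_0$.
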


In view of the previous result, we may say that the index $(s_0,\sigma_0)$ defined in (\ref{e24}) 
and (\ref{e25}) is {\it the formal Gevrey index of the equation {\rm (\ref{e11})}}. The proof of Theorem~\ref{teo27} is given in detail in Section~\ref{secproofmaintheo}. 
\par

\vspace{3mm}

\noindent \textbf{Example:} We consider the equation
\begin{equation}\label{e27}
    ((t\partial_t)^4+(x \partial_x)^2)u
     =a(x)t+ x (t\partial_t)^2(x \partial_x)^2u
      +x^{\mu}t^i ((t\partial_t)^j\partial_x^{\alpha}u)^n,
\end{equation}
where $a(x) \in \BC\{x \}$, $(j,\alpha) \in I_4$, and 
$\mu,i,n \in \BN$ with $i+n \geq 2$ and $n \geq 1$. Suppose the 
conditions $(\partial_xa)(0)>0$,
$(\partial_x^{\, \alpha}a)(0)>0$ (only in the case $n \geq 2$), 
and $a(x) \gg 0$ hold. Then we have:
\begin{itemize}
\item $\sigma_0=2$ and $s_0= 1 + \max \left[ \, 0, \, 
    \frac{j+ 2\alpha-\mu-4}{i+n-1} \right].$
\item The equation (\ref{e27}) has a unique formal solution
$u(t,x) \in \BC[[t,x]]$ satisfying $u(0,x) \equiv 0$, and it 
belongs to the clsass $G\{t,x \}_{(s,\sigma)}$ if and only if
$s \geq s_0$ and $\sigma \geq 2$.
\item The formal solution $u(t,x)$ belongs to $G\{t,x \}_{(1,2)}$,
if and only if one of the following conditions 1)$\sim$5) are satisfied:
\begin{align*}
   {\rm 1)} &\enskip \mu \geq 4, \\
   {\rm 2)} &\enskip \mbox{$\mu=3$ and $\alpha \leq 3$}, \\
   {\rm 3)} &\enskip \mbox{$\mu=2$ and 
                $(j,\alpha) \in \{(k,\beta) \in I_4 \,;\,
                \beta \leq 2 \} \cup \{(0,3) \}$}, \\
   {\rm 4)} &\enskip \mbox{$\mu=1$ and 
                $(j,\alpha) \in \{(k,\beta) \in I_4 \,;\,
                \beta \leq 1 \} \cup \{(0,2), (1,2) \}$}, \\
   {\rm 5)} &\enskip \mbox{$\mu=0$ and 
                $(j,\alpha) \in \{(k,\beta) \in I_4 \,;\, 
         k+\beta \leq 2\}
             \cup \{(2,1), (3,0) \}$}.
\end{align*}
\end{itemize}

\section{Some preparatory discussions}\label{sec3}
In this section, we present some preparatory discussions which are needed in the proof of (ii) of Theorem~\ref{maintheorem}. 

%
%\begin{lemma}\label{lema31}
%The following statements hold:
%\begin{itemize}
%\item Let $m,\mu \in \BN$ with $\mu<m$. Then, $[j-\mu]_+ \geq [j-m]_+ \geq 0$ for any $j \in \BN$.
%\item Let $m,n \in \BN$ and $\mu \in \BN^*$. If $m \leq n$ we have $[m-\mu]_+ \leq [n-\mu]_+$.
%\item Let $p \in \BN^*$ and $\mu \in \BN^*$. Then, there exists $C_{p,\mu}>0$ such that
%$$
%      \frac{n!}{(n-p)!} 
%    \leq C_{p,\mu} \frac{[n-\mu]_+!}{[n-p-\mu]_+!}
%    \quad \mbox{for $n \geq p$}.
%$$
%\item Let $m,n \in \BN$, and $\mu \in \BN^*$. Then, one has $[m-\mu]_+! \times [n-\mu]_+! \leq [m+n-\mu]_+!.$
%\end{itemize}
%\end{lemma}

%We omit the proof, which is a direct consequence of the definition of $[\cdot]_+$.

%   {\it Proof.} (1) and (2) are clear from the definition.
%Since $n!/(n-p)!$ and $[n-\mu]_+!/[n-p-\mu]_+!$ are both polynomials of degree $p$ with respecto to $n$ (for $n \geq p+\mu$), we have (3).

%Let us show (4). If $m>\mu$ we have $[m+n-\mu]_+!=((m-\mu)+n)!\geq (m-\mu)! n! \geq (m-\mu)![n-\mu]_+!=[m-\mu]_+![n-\mu]_+!$.
%If $n>\mu$ we have $[m+n-\mu]_+! \geq [m-\mu]_+![n-\mu]_+!$ in the same way. If $m \leq \mu$ and $n \leq \mu$ hold, we have $[m+n-\mu]_+! \geq 0! =0! \times 0!=[m-\mu]_+![n-\mu]_+!$. Thus, we have seen the result (4). 

Let $m \in \BN$, $\sigma \geq 1$ and $f(x)=\sum_{j \geq 0}f_jx^j \in \BC[[x]]$. In \cite{CLT},  the authors make use of the formal Borel operator ${\mathcal B}_{\sigma}$ defined by
$$
     {\mathcal B}_{\sigma}[f](x)
         = \sum_{j \geq 0} \frac{f_j}{j!^{\sigma-1}} x^j
$$
in order to achieve a Maillet-type result. In this paper, we need a refinement. For this purpose, we define the operator ${\mathcal B}_{\sigma}^{(m)}$ by
$$
   {\mathcal B}_{\sigma}^{(m)}[f](x)= f_0+f_1x+ \cdots+f_{m-1}x^{m-1}
               + \sum_{j \geq m}\frac{f_j}{(j-m)!^{\sigma-1}}x^j= \sum_{j \geq 0} \frac{f_j}{[j-m]_+!^{\sigma-1}} x^j.
$$

%If we write $f(x)= f_0+f_1x+ \cdots+f_{m-1}x^{m-1}+f_*(x)x^m$
%then we have ${\mathcal B}_{\sigma}^{(m)}[f](x)=
%f_0+f_1x+ \cdots+f_{m-1}x^{m-1}+ x^m {\mathcal B}_{\sigma}[f_*](x)$.
%
%
%

\begin{lemma}\label{lema32} Let $f(x), g(x) \in \BC[[x]]$. We also take $\sigma \geq 1$ 
and $m \in \BN$. The following statements hold:
\begin{align*}
   &{\mathcal B}_{\sigma}[|f|](x)=
    {\mathcal B}_{\sigma}^{(0)}[|f|](x) 
           \ll {\mathcal B}_{\sigma}^{(1)}[|f|](x)
           \ll {\mathcal B}_{\sigma}^{(2)}[|f|](x) \ll \ldots;\\
   &{\mathcal B}^{(m)}_{\sigma}[fg](x) \ll 
        {\mathcal B}^{(m)}_{\sigma}[|f|](x)
                   \times {\mathcal B}^{(m)}_{\sigma}[|g|](x);\\
   &{\mathcal B}^{(m)}_{\sigma}[x^kf](x)
      = x^k {\mathcal B}^{(m-k)}_{\sigma}[f](x) \quad
       \mbox{for $1 \leq k \leq m$}. 
\end{align*}
\end{lemma}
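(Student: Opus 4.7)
The plan is to prove each of the three assertions by direct coefficient comparison, exploiting the explicit formula for $\mathcal{B}_{\sigma}^{(m)}$. None of the steps requires analytic input beyond bookkeeping with factorials and the definition of the majorant relation $\ll$.

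For the first assertion, I note that $\mathcal{B}_{\sigma}^{(0)}[|f|](x)=\mathcal{B}_{\sigma}[|f|](x)$ is immediate from $[j]_{+}=j$. For the chain of majorants, the coefficient of $x^{j}$ in $\mathcal{B}_{\sigma}^{(m)}[|f|]$ is $|f_{j}|/[j-m]_{+}!^{\sigma-1}$, and since $m\mapsto [j-m]_{+}$ is non-increasing, the corresponding factorial is non-increasing in $m$; as all coefficients are nonnegative, this gives $\mathcal{B}_{\sigma}^{(m)}[|f|]\ll\mathcal{B}_{\sigma}^{(m+1)}[|f|]$.

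For the second assertion, write $(fg)(x)=\sum_{n\geq 0}\bigl(\sum_{j+k=n}f_{j}g_{k}\bigr)x^{n}$ and compare coefficients of $x^{n}$ on both sides. By the triangle inequality, the left-hand coefficient is bounded by
$$
\frac{1}{[n-m]_{+}!^{\sigma-1}}\sum_{j+k=n}|f_{j}||g_{k}|,
$$
while the right-hand coefficient equals
$$
\sum_{j+k=n}\frac{|f_{j}|\,|g_{k}|}{[j-m]_{+}!^{\sigma-1}\,[k-m]_{+}!^{\sigma-1}}.
$$
Hence the assertion reduces to the combinatorial inequality $[j-m]_{+}!\,[k-m]_{+}!\leq [n-m]_{+}!$ for every $j+k=n$. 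This is the main, though modest, obstacle. I would handle it by a short case analysis on whether $j,k$ are $\geq m$ or $<m$: if both are $\geq m$, the inequality follows from $\binom{n-2m}{j-m}\geq 1$ and $(n-2m)!\leq(n-m)!$; if exactly one (say $j$) is $<m$, then $[j-m]_{+}!=1$ and $[k-m]_{+}!=(k-m)!\leq(n-m)!$ since $n\geq k\geq m$; if both are $<m$ and $n<m$, all three factorials equal $1$, while if both are $<m$ but $n\geq m$ the right-hand side is only larger. Raising the inequality to the power $\sigma-1\geq 0$ preserves it.

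For the third assertion, expand $x^{k}f(x)=\sum_{j\geq k}f_{j-k}x^{j}$ and apply the definition:
$$
\mathcal{B}_{\sigma}^{(m)}[x^{k}f](x)=\sum_{j\geq k}\frac{f_{j-k}}{[j-m]_{+}!^{\sigma-1}}\,x^{j}=x^{k}\sum_{l\geq 0}\frac{f_{l}}{[l-(m-k)]_{+}!^{\sigma-1}}\,x^{l},
$$
after the substitution $l=j-k$; the hypothesis $1\leq k\leq m$ ensures that $m-k\in\mathbb{N}$, so that the last sum is exactly $\mathcal{B}_{\sigma}^{(m-k)}[f](x)$. This concludes the proof. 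Overall the only nontrivial ingredient is the factorial inequality in the majorant step for products, and all three items follow cleanly from the explicit definition of $[\,\cdot\,]_{+}$.
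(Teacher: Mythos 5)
Your proof is correct, and since the paper simply declares this lemma straightforward and omits the argument, your direct coefficient-by-coefficient verification (with the key factorial inequality $[j-m]_+!\,[k-m]_+!\le[j+k-m]_+!$ handled by case analysis) is exactly the intended elementary route. Nothing is missing.
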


The proof of Lemma~\ref{lema32} is straightforward.%a direct application of the statements in Lemma~\ref{lema31}.

%   {\it Proof.} By (1) of Lemma 3.1 we have $[j-m]_+ \geq [j-m-1]_+$
%and so we have
%$$
%      \frac{1}{[j-m]_+!^{\sigma-1}} 
%               \leq \frac{1}{[j-m-1]_+!^{\sigma-1}} \quad
%       \mbox{for any $j \in \BN$}.
%$$
%Hence, we have
%${\mathcal B}_{\sigma}^{(m)}[|f|](x) 
%            \ll {\mathcal B}_{\sigma}^{(m+1)}[|f|](x)$.
%This proves (3.2). 
%\par
%   The result (3.3) follows from (4) of Lemma 3.1, that is, 
%$$
%    \frac{1}{[i+j-m]_+!} \leq \frac{1}{[i-m]_+![j-m]_+!}.
%$$
%The result (3.4) is verified as follows:
%\begin{align*}
%   {\mathcal B}^{(m)}_{\sigma}[x^kf](x)
%    &= \sum_{j \geq 0} \frac{f_j}{[j+k-m]_+!^{\sigma-1}} x^{j+k} \\
%    &= x^k \sum_{j \geq 0} \frac{f_j}{[j-(m-k)]_+!^{\sigma-1}} x^j 
%    = x^k {\mathcal B}^{(m-k)}_{\sigma}[f](x).  \qed
%\end{align*}
%
%
%
%\par
%\vspace{5mm}

A Nagumo-like result is also derived, which will be useful in the sequel.
\begin{lemma}\label{lema33} Let $m \in \BN^*$ and $0<R \leq 1$.
Suppose that $f(x) \in \BC[[x]]$ satisfies
\begin{equation}\label{e35}
    {\mathcal B}^{(m)}_{\sigma}[|f|](x)
       \ll \frac{C}{(R-x)^a}
\end{equation}
for some $C>0$ and $a \geq 1$. Then, it holds that
$${\mathcal B}^{(m-1)}_{\sigma}[\partial_x |f|](x)
         \ll \frac{aC}{(R-x)^{a+1}},\qquad {\mathcal B}^{(m)}_{\sigma}[\partial_x |f|](x)
         \ll \frac{(a+\sigma)^{\sigma}e^{\sigma}C}
                  {(R-x)^{a+\sigma}}.$$
\end{lemma}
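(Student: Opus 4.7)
The first step in the plan is to observe the commutation identity
$$ \partial_x \bigl( {\mathcal B}_{\sigma}^{(m)}[|f|] \bigr)(x) = {\mathcal B}_{\sigma}^{(m-1)}[\partial_x |f|](x), $$
which is immediate from coefficient comparison: both sides have $x^k$-coefficient equal to $(k+1)|f_{k+1}|/[k+1-m]_+!^{\sigma-1}$. Since the majorant relation $\ll$ is preserved by termwise differentiation of power series with nonnegative coefficients, the first inequality then follows at once by differentiating the hypothesis:
$$ {\mathcal B}_{\sigma}^{(m-1)}[\partial_x |f|](x) = \partial_x {\mathcal B}_{\sigma}^{(m)}[|f|](x) \ll \partial_x \frac{C}{(R-x)^a} = \frac{aC}{(R-x)^{a+1}}. $$

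For the second inequality, the plan is to compare ${\mathcal B}_{\sigma}^{(m)}[\partial_x |f|]$ with ${\mathcal B}_{\sigma}^{(m-1)}[\partial_x |f|]$ coefficient by coefficient. Their $x^k$-coefficients are $(k+1)|f_{k+1}|/[k-m]_+!^{\sigma-1}$ and $(k+1)|f_{k+1}|/[k+1-m]_+!^{\sigma-1}$ respectively, so the former equals the latter times the factor $[k+1-m]_+!^{\sigma-1}/[k-m]_+!^{\sigma-1}$. A short case analysis shows this factor equals $1$ for $k \leq m-1$ and $(k-m+1)^{\sigma-1}$ for $k \geq m$; in particular it is always at most $(k+1)^{\sigma-1}$. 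Combined with the first inequality, the $x^k$-coefficient of ${\mathcal B}_{\sigma}^{(m)}[\partial_x |f|]$ is therefore bounded by $(k+1)^{\sigma-1} aC \binom{a+k}{k} R^{-(a+k+1)}$.

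It remains to compare this to the $x^k$-coefficient of $(a+\sigma)^{\sigma}e^{\sigma}C/(R-x)^{a+\sigma}$, namely $(a+\sigma)^{\sigma}e^{\sigma}C\binom{a+\sigma+k-1}{k}R^{-(a+\sigma+k)}$. After dividing and using that $R \leq 1$ implies $R^{-(\sigma-1)} \geq 1$, this reduces to showing
$$ a(k+1)^{\sigma-1}\binom{a+k}{k} \leq (a+\sigma)^{\sigma}e^{\sigma}\binom{a+\sigma+k-1}{k}. $$
This is the main technical obstacle, and the plan is to estimate the ratio of generalized binomial coefficients
$$ \frac{\binom{a+\sigma+k-1}{k}}{\binom{a+k}{k}} = \prod_{j=1}^{k} \Bigl( 1 + \frac{\sigma-1}{a+j} \Bigr) \geq \Bigl( \frac{a+k+\sigma}{a+\sigma} \Bigr)^{\sigma-1}, $$
where the last step uses the elementary inequality $\log(1+y) \geq y/(1+y)$ together with an integral lower bound on $\sum_{j=1}^k (a+j+\sigma-1)^{-1}$. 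Substituting this estimate, the required inequality reduces to $a(k+1)^{\sigma-1} \leq (a+\sigma)e^{\sigma}(a+k+\sigma)^{\sigma-1}$, which is transparent from $a \leq a+\sigma$ and $k+1 \leq a+k+\sigma$; the factor $e^{\sigma}$ in the statement is precisely what absorbs the slack in the logarithmic estimate.
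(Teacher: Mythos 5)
Your proof is correct, and its overall skeleton coincides with the paper's. For the first inequality you use the commutation identity $\partial_x\bigl({\mathcal B}^{(m)}_{\sigma}[|f|]\bigr)={\mathcal B}^{(m-1)}_{\sigma}[\partial_x|f|]$ and differentiate the majorant, which is a slightly slicker packaging of the paper's direct coefficient estimate. For the second, both arguments bound the $x^k$-coefficient of ${\mathcal B}^{(m)}_{\sigma}[\partial_x|f|]$ by $(k+1)^{\sigma-1}aC\binom{a+k}{k}R^{-(a+k+1)}$ (with $\binom{a+k}{k}$ the generalized binomial coefficient), compare with the expansion of $(a+\sigma)^{\sigma}e^{\sigma}C(R-x)^{-(a+\sigma)}$ using $R\le 1$, and reduce the lemma to $a(k+1)^{\sigma-1}\binom{a+k}{k}\le(a+\sigma)^{\sigma}e^{\sigma}\binom{a+\sigma+k-1}{k}$, which is exactly the Gamma-quotient inequality displayed at the end of the paper's proof. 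The genuine difference is at that last step: the paper cites Lemma 5 of Chen--Luo--Tahara, while you prove the inequality directly, via $\log(1+y)\ge y/(1+y)$ and an integral comparison giving $\binom{a+\sigma+k-1}{k}/\binom{a+k}{k}\ge\bigl((a+k+\sigma)/(a+\sigma)\bigr)^{\sigma-1}$, and then finishing with $a\le a+\sigma$ and $k+1\le a+k+\sigma$. This makes the lemma self-contained, which is a real gain. One small remark: in your chain the factor $e^{\sigma}$ is never actually needed (your estimates already yield the bound with constant $(a+\sigma)^{\sigma}$), so your closing comment that $e^{\sigma}$ ``absorbs the slack'' is inaccurate as stated; it only shows the constant in the lemma is not optimal, which is harmless.
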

\begin{proof}
We write $f(x)=\sum_{j \geq 0}f_jx^j$. By the assumption (\ref{e35}) we have
$$
   \left\{ \begin{array}{ll}
     {\displaystyle |f_j| \leq \frac{C}{R^{a+j}} 
                     \frac{a(a+1) \cdots (a+j-1)}{j!}},
           &\mbox{if $0 \leq j \leq m-1$}, \\[11pt]
     {\displaystyle \frac{|f_j|}{(j-m)!^{\sigma-1}} \leq 
          \frac{C}{R^{a+j}} \frac{a(a+1) \cdots (a+j-1)}{j!}},
           &\mbox{if $j \geq m$}.
       \end{array}
                   \right.
$$
These estimates yield 
$$
{\mathcal B}^{(m-1)}_{\sigma}[\partial_x |f|](x) \ll  \sum_{j \geq 0} \frac{aC}{R^{a+1+j}} 
                 \frac{(a+1) \cdots (a+j)}{j!} x^j
             = \frac{aC}{(R-x)^{a+1}},
$$
which proves the first statement of Lemma~\ref{lema33}. The second follows from the next estimates:
\begin{align*}
   {\mathcal B}^{(m)}_{\sigma}[\partial_x |f|](x)&\ll \sum_{0 \leq j \leq m-1}(j+1)
                \frac{C}{R^{a+j+1}} 
                    \frac{a(a+1) \cdots (a+j)}{(j+1)!}x^j  \hfill \\
   & \qquad\qquad + \sum_{j \geq m} 
            \frac{(j+1)(j+1-m)!^{\sigma-1}}{(j-m)!^{\sigma-1}}
             \frac{C}{R^{a+j+1}} 
                     \frac{a(a+1) \cdots (a+j)}{(j+1)!} x^j \\
   & \ll \sum_{j \geq 0} \frac{(j+1)^{\sigma-1}C}{R^{a+j+1}} 
                 \frac{a(a+1) \cdots (a+j)}{j!} x^j.
\end{align*}
Here, we have used that $1 \leq (j+1)$ 
(for $0 \leq j \leq m-1$) and $(j+1-m) \leq (j+1)$ 
(for $j \geq m$).

Let $A=(a+\sigma)^{\sigma}e^{\sigma}$. Since
\begin{align*}
     \frac{(a+\sigma)^{\sigma}e^{\sigma}C}{(R-x)^{a+\sigma}}
     = \sum_{j \geq 0} \frac{AC}{R^{a+\sigma+j}}
         \frac{(a+\sigma)(a+\sigma+1) 
         \cdots (a+\sigma+j-1)}{j!} x^j,
\end{align*}
and $R^{a+\sigma+j} \leq R^{a+j+1}$, the proof is concluded after checking that
%$$
%     \frac{(j+1)^{\sigma-1} a(a+1) \cdots (a+j-1)(a+j)}
%     {(a+\sigma)(a+\sigma+1) 
%               \cdots (a+\sigma+j-1)} \leq A,
%$$
%that is, 
$$
     \frac{(j+1)^{\sigma-1} \Gamma(a+j+1) \Gamma(a+\sigma)}
     {\Gamma(a) \Gamma(a+\sigma+j)} \leq A. 
$$
We refer to the proof of Lemma 5 in~\cite{CLT} for a detailed demonstration of such estimate. %For readers' convenience, let us give here the proof again.
%\par
%    Recall the Stirling's formula (see \cite{WW}):
%$$
%    1 < \frac{\Gamma(x)}{\sqrt{2\pi} x^{x-1/2}e^{-x}} < \sqrt{e}
%    \quad \mbox{for $x \geq 1$}.
%$$
%By using this, we have
%\begin{align*}
%   &\frac{(j+1)^{\sigma-1} \Gamma(a+j+1) \Gamma(a+\sigma)}
%     {\Gamma(a+\sigma+j)\Gamma(a)} \\
%   &\qquad \leq \frac{(j+1)^{\sigma-1}\sqrt{2\pi}
%                    (a+j+1)^{a+j+1-1/2}e^{-a-j-1} \sqrt{e}}
%      {\sqrt{2\pi}(a+\sigma+j)^{a+\sigma+j-1/2}e^{-a-\sigma-j}}
%       \times \\
%   &\qquad \qquad \qquad \times 
%             \frac{\sqrt{2\pi}
%          (a+\sigma)^{a+\sigma-1/2}e^{-a-\sigma} \sqrt{e}}
%      {\sqrt{2\pi}a^{a-1/2}e^{-a}} \\
%   &= \frac{(j+1)^{\sigma-1}(a+j+1)^{a+j+1-1/2}
%              (a+\sigma)^{a+\sigma-1/2}}
%            {(a+\sigma+j)^{a+\sigma+j-1/2}a^{a-1/2}}.
%\end{align*}
%Since $(j+1) \leq (a+j+1) \leq (a+\sigma+j)$ and 
%$(1+\sigma/a)^a \leq e^\sigma$ hold, we have
%\begin{align*}
%   &\frac{(j+1)^{\sigma-1} \Gamma(a+j+1) \Gamma(a+\sigma)}
%     {\Gamma(a+\sigma+j)\Gamma(a)} \\
%   &\qquad 
%        \leq \frac{(a+j+1)^{a+\sigma+j-1/2}}
%                  {(a+\sigma+j)^{a+\sigma+j-1/2}} \times 
%      (a+\sigma)^{\sigma} \Bigl( \frac{a+\sigma}{a} \Bigr)^a
%            \times \Bigl( \frac{a}{a+\sigma} \Bigr)^{1/2} \\
%   &\qquad \leq (a+\sigma)^{\sigma} e^{\sigma}=A.
%\end{align*}
%This proves (3.8).  \qed
%
%
%
%\par
%\vspace{5mm}
\end{proof}
\begin{corol}\label{coro34} Let $m \in \BN^*$ and $0<R \leq 1$.
Suppose that $f(x) \in \BC[[x]]$ satisfies (\ref{e35}). Then, for
all $1 \leq \mu \leq m$ and $k \geq 1$ we have
\begin{align*}
    &{\mathcal B}^{(m-\mu)}_{\sigma}[\partial_x^{\mu}|f|](x)
        \ll \frac{a(a+1) \cdots(a+\mu-1)C}{(R-x)^{a+\mu}},\\
    &{\mathcal B}^{(m-\mu)}_{\sigma}[\partial_x^{k+\mu}|f|](x)
        \ll \frac{a(a+1) \cdots(a+\mu-1)A_{\mu,k}C}
                 {(R-x)^{a+\mu+k \sigma}},
\end{align*}
where $A_{\mu,k} = e^{k \sigma}\prod_{h=1}^k (a+\mu+h\sigma)^{\sigma}$.
\end{corol}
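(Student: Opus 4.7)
The plan is to prove both inequalities by inductively iterating Lemma~\ref{lema33}: first I differentiate $\mu$ times using only its first conclusion (which lowers the superscript of $\mathcal{B}^{(\cdot)}_\sigma$ by one each time), and then I differentiate $k$ further times using only its second conclusion (which keeps the superscript fixed).

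For the first estimate I proceed by induction on $\mu \in \{1,\ldots,m\}$. The case $\mu=1$ is exactly the first conclusion of Lemma~\ref{lema33}. Assuming the bound holds at step $\mu$, I apply Lemma~\ref{lema33} with $(m,a,C,f)$ replaced by $(m-\mu,\, a+\mu,\, a(a+1)\cdots(a+\mu-1)C,\, \partial_x^{\mu}|f|)$; the hypothesis (\ref{e35}) at level $m-\mu$ for $\partial_x^{\mu}|f|$ is precisely the inductive assumption, and the nonnegativity of coefficients is preserved. The first conclusion then upgrades $\mu$ to $\mu+1$, since $(a+\mu)\cdot a(a+1)\cdots(a+\mu-1)=a(a+1)\cdots(a+\mu)$.

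For the second estimate I fix $\mu$ and induct on $k\ge 1$. The base case $k=1$ is the second conclusion of Lemma~\ref{lema33} applied to $\partial_x^{\mu}|f|$ with $(m,a,C)\to(m-\mu,\, a+\mu,\, a(a+1)\cdots(a+\mu-1)C)$, using the first estimate of the corollary as its hypothesis; the resulting constant is $(a+\mu+\sigma)^\sigma e^\sigma=A_{\mu,1}$. For the inductive step, I apply the second conclusion of Lemma~\ref{lema33} to $\partial_x^{k+\mu}|f|$ with $a\to a+\mu+k\sigma$ and $C\to a(a+1)\cdots(a+\mu-1)A_{\mu,k}C$. This yields exponent $a+\mu+(k+1)\sigma$ and multiplies the constant by $(a+\mu+(k+1)\sigma)^\sigma e^\sigma$, which matches the recursion $A_{\mu,k+1}=A_{\mu,k}\cdot e^\sigma(a+\mu+(k+1)\sigma)^\sigma$ built into the definition.

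The only delicate point is the boundary case $\mu=m$, where the operator $\mathcal{B}^{(m-\mu)}_\sigma=\mathcal{B}^{(0)}_\sigma$ has superscript $0$ while Lemma~\ref{lema33} is stated for $m\in\BN^*$. However, inspection of its proof shows that the second conclusion survives at $m=0$: the finite sum over $0\le j\le m-1$ becomes empty and the tail sum uses only the trivial bound $(j+1-0)\le(j+1)$. Thus the induction on $k$ runs uniformly for $1\le\mu\le m$, and no separate argument is required. The main obstacle in writing this out is the purely notational one of tracking how the triple $(a,C,\text{superscript})$ transforms at each application of Lemma~\ref{lema33}, and verifying that the product $A_{\mu,k}=e^{k\sigma}\prod_{h=1}^k(a+\mu+h\sigma)^\sigma$ is exactly what is produced by the telescoping.
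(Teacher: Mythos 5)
Your proof is correct and follows exactly the route the paper intends for this corollary, namely iterating the first conclusion of Lemma~\ref{lema33} $\mu$ times and then its second conclusion $k$ times, with the constants telescoping to $a(a+1)\cdots(a+\mu-1)$ and $A_{\mu,k}$ (the paper gives no separate proof, treating it as an immediate iteration of the lemma). Your observation that the boundary case $\mu=m$ requires the second conclusion of Lemma~\ref{lema33} at superscript $0$, and that its proof survives there since the finite sum is empty and $(j+1-m)\leq(j+1)$ is trivial, is a careful and valid way to close that gap.
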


\subsection{On the Newton polygon}\label{sec33}
Let ${\mathcal N}_0$ be the Newton polygon associated to equation (\ref{e11}), and
let $\phi(\lambda,\rho)$ be as in (\ref{e22}). For $(j,\alpha) \in I_m$
with $(j,\alpha) \not\in {\mathcal N}_0$ we recall that
$$
     d_{j,\alpha}= \min_{(j,x) \in {\mathcal N}_0} |\alpha-x|
      = \min \{y \in \BR \,;\, (j,\alpha-y) \in {\mathcal N}_0 \}.
$$
%(see Section~\ref{sec23}).
%
%
\begin{prop}\label{prop35} Let $(j,\alpha) \in I_m$. The 
following results hold.
\begin{itemize}
\item[(i)] If $(j,\alpha) \in {\mathcal N}_0$, we have
$$  k^j l^{\alpha} \leq \phi(k,l), \quad (k,l) \in \BN^* \times \BN.$$
\item[(ii)] If $(j,\alpha) \not\in {\mathcal N}_0$, for $p \geq 1$
and $\sigma \geq 1+d_{j,\alpha}/p$ we have
$$ \frac{k^j (l-p)^{\alpha}}{\phi(k,l)} \leq \Bigl(\frac{l!}{(l-p)!} \Bigr)^{\sigma-1}, 
      \quad \hbox{ for }(k,l) \in \BN^* \times \BN\hbox{ with }l \geq p.$$
\end{itemize}
\end{prop}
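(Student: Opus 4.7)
My plan is to prove part (i) by a weighted AM--GM argument based on the convex-hull characterization of $\mathcal{N}_0$, and then to deduce part (ii) by applying (i) at the boundary point $(j,\alpha-d_{j,\alpha})$ and controlling the residual factor via the elementary observation that $\alpha\geq d_{j,\alpha}$. Concretely, for (i) I write $(j,\alpha)=\sum_i\theta_i(a_i,b_i)$ with $\theta_i\geq 0$, $\sum_i\theta_i=1$, and $(a_i,b_i)\in C(m_i,n_i)$ (so $a_i\leq m_i$, $b_i\leq n_i$); this gives $j\leq\sum_i\theta_im_i$ and $\alpha\leq\sum_i\theta_in_i$. For $k\geq 1$ and $l\geq 1$, weighted AM--GM then yields
$$ k^jl^\alpha\leq\prod_i(k^{m_i}l^{n_i})^{\theta_i}\leq\sum_i\theta_ik^{m_i}l^{n_i}\leq\phi(k,l),$$
and the edge case $l=0$ reduces to $k^j\leq k^m=\phi(k,0)$, which is immediate since $j\leq m$.

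For part (ii), the geometric input is that $(m,0)$ is a vertex of $\mathcal{N}_0$ and $(j,\alpha)\in I_m$ forces $j\leq m$, so $(j,0)\in C(m,0)\subset\mathcal{N}_0$; hence $d:=d_{j,\alpha}$ (which is positive because $(j,\alpha)\notin\mathcal{N}_0$) satisfies $d\leq\alpha$. Since $(j,\alpha-d)\in\partial\mathcal{N}_0$, part (i) gives $k^jl^{\alpha-d}\leq\phi(k,l)$, whence
$$\frac{k^j(l-p)^\alpha}{\phi(k,l)}\leq\frac{(l-p)^\alpha}{l^{\alpha-d}}=l^d\Bigl(\frac{l-p}{l}\Bigr)^\alpha.$$
For $l>p$, since $0<(l-p)/l<1$ and $\alpha\geq d$, one has $((l-p)/l)^\alpha\leq((l-p)/l)^d$, so the right-hand side is bounded above by $(l-p)^d$; combined with $l!/(l-p)!\geq(l-p+1)^p\geq(l-p)^p$ and the hypothesis $d\leq p(\sigma-1)$, this yields $(l-p)^d\leq(l-p)^{p(\sigma-1)}\leq(l!/(l-p)!)^{\sigma-1}$. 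The case $l=p$ is trivial since the left-hand side vanishes (as $\alpha\geq 1$).

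The only real pitfall is the choice of intermediate estimate: the naive bound $(l-p)^\alpha\leq l^\alpha$ would leave $l^d$ on the right, which at the threshold $\sigma=1+d/p$ compares unfavorably against $(l!/(l-p)!)^{\sigma-1}\leq l^{p(\sigma-1)}=l^d$. The remedy is to keep the factor $((l-p)/l)^\alpha$ intact and use $\alpha\geq d$ to drop its exponent from $\alpha$ to $d$, converting $l^d$ into $(l-p)^d$; that version comfortably fits under the lower bound $(l-p)^p\leq l!/(l-p)!$.
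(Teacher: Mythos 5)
Your proof is correct, and for part (ii) it takes a genuinely different route from the paper's. The paper does not reuse part (i) there: it first cites an earlier result for (i), and for (ii) it locates the edge of ${\mathcal N}_0$ with $m_{i+1}\le j<m_i$, throws away all of $\phi(k,l)$ except the two terms $k^{m_i}l^{n_i}+k^{m_{i+1}}l^{n_{i+1}}$, and proves a standalone two-term estimate (Lemma~\ref{lema36} via Lemma~\ref{lema37}) by Young's inequality applied to $k^{j}(l-p)^{n(m-j)/m}\le k^{m}+l^{n}$, keeping $l-p$ in the interpolation throughout so that the bound $(l-p)^{d_{j,\alpha}}$ falls out directly. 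You instead bound $\phi(k,l)\ge k^{j}l^{\alpha-d_{j,\alpha}}$ at the boundary point $(j,\alpha-d_{j,\alpha})$ and repair the mismatch between $(l-p)^{\alpha}$ and $l^{\alpha-d}$ with the monotonicity trick $((l-p)/l)^{\alpha}\le((l-p)/l)^{d}$ (using $d\le\alpha$), which you correctly identify as the step that saves you at the threshold $\sigma=1+d/p$, where the naive $(l-p)^{\alpha}\le l^{\alpha}$ would indeed fail for $p\ge 2$; both routes then finish identically with $(l-p)^{d}\le(l-p)^{p(\sigma-1)}\le\bigl(l!/(l-p)!\bigr)^{\sigma-1}$. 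One point you should make explicit: $(j,\alpha-d_{j,\alpha})$ is in general not a lattice point, so you are not literally invoking the statement of (i) but its proof; since your convexity/weighted AM--GM argument is insensitive to integrality of the exponents (only $k\ge1$, $l\ge1$ and nonnegative exponents are needed, and $d_{j,\alpha}\le\alpha$ guarantees the latter), this is harmless, but it deserves a sentence. A side benefit of your version is that it needs no case splitting, whereas the paper's Lemma~\ref{lema36} treats $j=0$ and $n=0$ separately; a benefit of the paper's edge-wise lemma is that it makes visible exactly why the relevant exponent is the vertical distance $d_{j,\alpha}$ to the supporting edge.
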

\begin{proof}
Part (i) is proved in Lemma 7~\cite{poincare}. Part (ii) follows from Lemma~\ref{lema36} and Lemma~\ref{lema37} below.

\begin{lemma}\label{lema36}
 Let $m,n,p \in \BN$ with $m \geq 1$, $n \geq 0$ and $p \geq 1$. Suppose that $0 \leq j<m$ and $\alpha>n(m-j)/m$. Set $d=\alpha-n(m-j)/m$. Then, if $\sigma \geq (p+d)/p$ holds, we have
\begin{equation}\label{e313}
          \frac{k^j (l-p)^{\alpha}}{k^m+l^n}
           \leq \Bigl(\frac{l!}{(l-p)!} \Bigr)^{\sigma-1}, 
      \quad (k,l) \in \BN^* \times \BN\hbox{ with }l \geq p.
\end{equation}
\end{lemma}

The geometry described in Lemma~\ref{lema36} is illustrated in Figure~\ref{fig:a3}.
\begin{figure}[ht]
	\centering
		\includegraphics{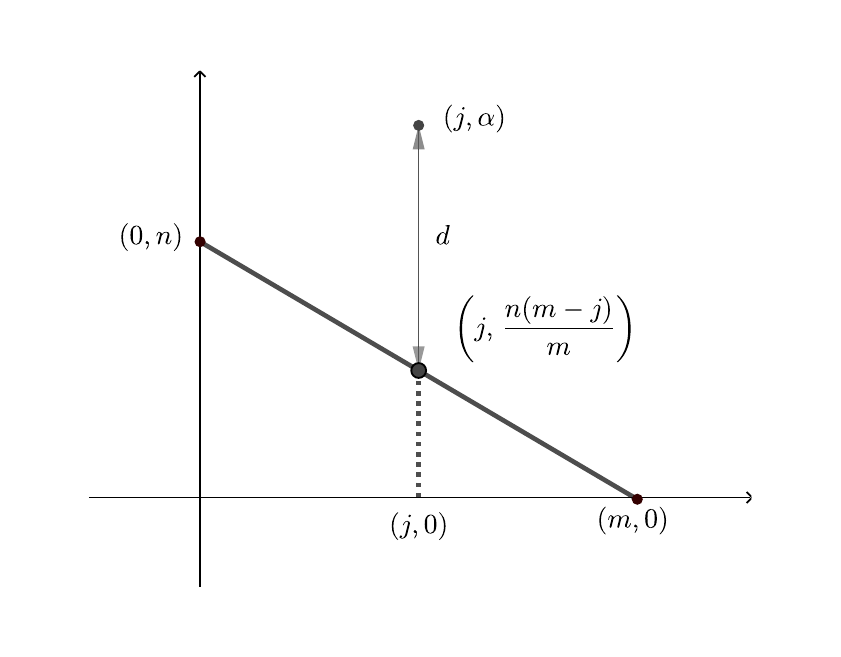}
	\caption{Geometry of the result described in Lemma~\ref{lema36}}
		\label{fig:a3}
\end{figure}
\begin{proof}

 If $j=0$ we have $d=\alpha-n>0$ and since $\sigma \geq (p+d)/p$ we have
$$
    \frac{k^j (l-p)^{\alpha}}{k^m+l^n}
        = \frac{(l-p)^{\alpha}}{k^m+l^n}
        \leq (l-p)^{\alpha-n}=(l-p)^d\le \Bigl(\frac{l!}{(l-p)!} \Bigr)^{\sigma-1},
$$
which yields (\ref{e313}).

If $n=0$, we have $d=\alpha$ and (\ref{e313}) follows from
$$
    \frac{k^j (l-p)^{\alpha}}{k^m+l^n}
        =  \frac{k^j(l-p)^{\alpha}}{k^m}
        \leq (l-p)^{\alpha} = (l-p)^d\le (l-p)^{p(\sigma-1)}.
$$

Let us consider the case $j>0$ and $n>0$. Let $a=m/j$ and $b=m/(m-j)$. Then, we have $1/a+1/b=1$
. From the application of Young's inequality we have
$$    k^j (l-p)^{n(m-j)/m}\leq \frac{1}{a}(k^j)^a + \frac{1}{b}((l-p)^{n(m-j)/m})^b = \frac{1}{a}k^m + \frac{1}{b}(l-p)^n    \leq k^m+l^n.$$
The result follows from $p(\sigma-1) \geq d$ and the fact that
$$  \frac{k^j (l-p)^{\alpha}}{k^m+l^n}= \frac{k^j (l-p)^{n(m-j)/m}}{k^m+l^n}(l-p)^{\alpha-n(m-j)/m}\leq  (l-p)^{\alpha-n(m-j)/m}=(l-p)^{d}.$$

\end{proof}

As a direct consequence of Lemma~\ref{lema36}, we have

\begin{lemma}\label{lema37} Let $m_i,m_{i+1},n_i,n_{i+1},p \in \BN$ with 
$m_i \geq 1$ and $p \geq 1$. Suppose that $m_{i+1} \leq j<m_i$, 
$n_{i+1} \geq n_i$ and 
$\alpha>n_i+ (n_{i+1}-n_i)(m_i-j)/(m_i-m_{i+1})$ hold.
Set $d=\alpha-n_i- (n_{i+1}-n_i)(m_i-j)/(m_i-m_{i+1})$. Then, if 
$\sigma \geq (p+d)/p$ holds we have
$$
    \frac{k^j (l-p)^{\alpha}}{k^{m_i}l^{n_i}+k^{m_{i+1}}l^{n_{i+1}}}
           \leq \Bigl(\frac{l!}{(l-p)!} \Bigr)^{\sigma-1}, 
      \quad (k,l) \in \BN^* \times \BN\hbox{ with }l \geq p.
$$
\end{lemma}
\begin{proof}
We apply Lemma~\ref{lema36} to 
$$
   \frac{k^j (l-p)^{\alpha}}
          {k^{m_i}l^{n_i}+k^{m_{i+1}}l^{n_{i+1}}}
   \leq  \frac{k^{j-m_{i+1}} (l-p)^{\alpha-n_i}}
                     {k^{m_i-m_{i+1}}+l^{n_{i+1}-n_i}}.
$$
\end{proof}
%The situation is displayed in Figure~\ref{fig:a2}.

%\begin{figure}[h]
%	\centering
%		\includegraphics{a2.pdf}

%	\caption{Geometry of the result described in Lemma~\ref{lema37}}
%		\label{fig:a2}
%\end{figure}

\end{proof}
\subsection{On an auxiliary equation}\label{sec34}

In this subsection, we consider the auxiliary equation
\begin{equation}\label{e314}
    C(x;k,x \partial_x)w= g(x) \in \BC[[x]]
\end{equation}
under the assumption $\sigma_0>1$, where $C(x;\lambda,\rho)$
is defined in (\ref{e13}). We note that the condition $\sigma_0>1$
is equivalent to the condition 
$\Lambda \setminus {\mathcal N}_0 \ne \emptyset$ (or 
$\Lambda_1 \setminus {\mathcal N}_0 \ne \emptyset$). 
We set $\Lambda_{out}=\Lambda \setminus {\mathcal N}_0$. If $\Lambda_{out} \ne \emptyset$ holds, the irregularity 
$\sigma_0$ is defined by
$$
      \sigma_0= \max_{(j,\alpha) \in \Lambda_{out}}
           \frac{p_{j,\alpha}+d_{j,\alpha}}{p_{j,\alpha}}.
$$
The definition of $p_{j,\alpha}$ described at the beginning of Section~\ref{sec23} allow us to  express the coefficients $c_{j,\alpha}(x)$ as follows:
$$
     c_{j,\alpha}(x)= \left\{
      \begin{array}{ll}
             x^{p_{j,\alpha}}b_{j,\alpha}(x), 
                    &\mbox{if $(j,\alpha) \in \Lambda_1$}, 
                    \\[3pt]
        c_{j,\alpha}(0)+ x^{p_{j,\alpha}}b_{j,\alpha}(x), 
    &\mbox{if $(j,\alpha) \in \Lambda_0 \setminus \{(m,0) \}$},
     \end{array}
                   \right.
$$
Observe that, in case $(j,\alpha) \in \Lambda_1$, the elements 
$p_{j,\alpha}$ are those in (\ref{e24}).

\begin{prop}\label{prop38}
Supppose the conditions (N), (GP) are taken for granted, and $\sigma_0>1$.  Then, for any $k \in \BN^*$ and $g(x) \in \BC[[x]]$ the equation (\ref{e314}) has a unique solution $w(x) \in \BC[[x]]$, and it holds that
\begin{equation}\label{e315}
    {\mathcal B}_{\sigma}^{(m)}[|w|](x)
    \ll \frac{A(x)}{k^m} {\mathcal B}_{\sigma}^{(m)}[|g|](x)
\end{equation}
for any $\sigma \geq \sigma_0$, where 
\begin{equation}\label{e899}
     A(x)= \frac{1}{c_0} \sum_{n \geq 0}
      \biggl( \frac{C_1}{c_0} \sum_{(j,\alpha) \in \Lambda} 
             x^{p_{j,\alpha}}
       {\mathcal B}_{\sigma}^{(m)}[|b_{j,\alpha}|](x) \biggr)^n,
\end{equation}
$c_0>0$ is the constant in (\ref{e23}), and $C_1>0$ is a constant which 
is independent of $k$ and $g(x)$.
\end{prop}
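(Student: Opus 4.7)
The plan is to expand $w(x)=\sum_{l\ge 0} w_l x^l$ and treat (\ref{e314}) as a linear recursion, then to realise $w$ as a formal Neumann series and derive the majorant estimate (\ref{e315}) from a single coefficient-wise kernel inequality processed through $\mathcal{B}_{\sigma}^{(m)}$. First I would rewrite (\ref{e314}) in the form
\[
     L(k,x\partial_x)\, w \;=\; g(x) \;+\; \sum_{(j,\alpha)\in\Lambda} x^{p_{j,\alpha}}\, b_{j,\alpha}(x)\, k^j\, x^{\alpha}\,\partial_x^{\alpha} w,
\]
using the identity $(x\partial_x)(x\partial_x-1)\cdots(x\partial_x-\alpha+1)=x^{\alpha}\partial_x^{\alpha}$ and the factorizations of $c_{j,\alpha}(x)$ recalled just before the statement. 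Identifying coefficients of $x^{l}$ produces
\[
     L(k,l)\, w_{l} \;=\; g_{l} \;+\; \sum_{(j,\alpha)\in\Lambda} k^{j} \sum_{\substack{l_1+l_2=l\\ l_1\ge p_{j,\alpha}}} P_{\alpha}(l_2)\,(b_{j,\alpha})_{l_1-p_{j,\alpha}}\, w_{l_2},
\]
where $P_{\alpha}(\rho):=\rho(\rho-1)\cdots(\rho-\alpha+1)$. Because $l_2<l$ on the right-hand side and $L(k,l)\ne 0$ by (N), this recursion has a unique solution $w\in\BC[[x]]$, and I would represent it as $w=\sum_{n\ge 0} v_n$ with $v_0=L(k,x\partial_x)^{-1}g$ and $v_{n+1}=L(k,x\partial_x)^{-1}\bigl[\sum_{(j,\alpha)\in\Lambda} x^{p_{j,\alpha}}b_{j,\alpha}\, k^{j}\, x^{\alpha}\partial_x^{\alpha} v_n\bigr]$; each application of the second operator raises the valuation by at least $\min p_{j,\alpha}\ge 1$, so the series converges in $\BC[[x]]$.

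The analytic core of the proof is then the kernel bound: there exists a constant $C_1>0$, depending only on the data of the equation (not on $k$ or $g$), such that for every $(j,\alpha)\in\Lambda$, every $k\in\BN^{*}$, and every $(l_1,l_2)$ with $l_1\ge p_{j,\alpha}$ and $l=l_1+l_2$,
\[
     \frac{k^{j}\, P_{\alpha}(l_2)}{\phi(k,l)} \;\le\; C_1\, \frac{[l-m]_+!^{\,\sigma-1}}{[l_1-p_{j,\alpha}-m]_+!^{\,\sigma-1}\,[l_2-m]_+!^{\,\sigma-1}}.
\]
Proposition \ref{prop35} supplies the analytic input. For $(j,\alpha)\in\mathcal{N}_0$, part~(i) yields $k^{j}P_{\alpha}(l_2)/\phi(k,l)\le 1$ at once. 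For $(j,\alpha)\notin\mathcal{N}_0$, the choice $p=p_{j,\alpha}$ in part~(ii) is admissible since $\sigma\ge\sigma_0\ge 1+d_{j,\alpha}/p_{j,\alpha}$, and combining it with $P_{\alpha}(l_2)\le(l-p_{j,\alpha})^{\alpha}$ (valid because $l_2\le l-p_{j,\alpha}$) gives $k^{j}P_{\alpha}(l_2)/\phi(k,l)\le(l!/(l-p_{j,\alpha})!)^{\sigma-1}$. What is left is the purely combinatorial inequality
\[
     \frac{l!\,[l_1-p_{j,\alpha}-m]_+!\,[l_2-m]_+!}{(l-p_{j,\alpha})!\,[l-m]_+!} \;\le\; K,
\]
uniformly in the indices, and I expect this to be the main technical obstacle. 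I would handle it by splitting into sub-cases according as $l_1-p_{j,\alpha}$ and $l_2$ exceed $m$ or not, and exploiting the elementary estimate $\prod_{i=1}^{p}(l-p+i)/(l-m-p+i)\le \binom{m+p}{p}$ to absorb the shift by $m$ that arises in comparing $l!/(l-p)!$ with $(l-m)!/(l-m-p)!$.

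Granting the kernel bound, the estimate (\ref{e315}) assembles quickly. Invoking $|L(k,l)|\ge c_0\,\phi(k,l)\ge c_0\, k^{m}$ (Lemma \ref{lema22} together with $\phi(k,l)\ge k^{m}$) yields first $W_0:=\mathcal{B}_\sigma^{(m)}[|v_0|]\ll \tfrac{1}{c_0 k^{m}}\mathcal{B}_\sigma^{(m)}[|g|]$; dividing the analogous coefficient inequality for $v_{n+1}$ by $[l-m]_+!^{\,\sigma-1}$ and applying the kernel bound together with Lemma \ref{lema32} produces the inductive majorant
\[
     W_{n+1} \;\ll\; \frac{C_1}{c_0}\sum_{(j,\alpha)\in\Lambda} x^{p_{j,\alpha}}\,\mathcal{B}_\sigma^{(m)}[|b_{j,\alpha}|]\cdot W_n.
\]
Iterating, summing over $n$, and using the trivial majorization $|w|\ll\sum_{n\ge 0}|v_n|$ then gives $\mathcal{B}_\sigma^{(m)}[|w|]\ll \tfrac{A(x)}{k^{m}}\mathcal{B}_\sigma^{(m)}[|g|]$ with $A(x)$ exactly as in (\ref{e899}); the Neumann series defining $A(x)$ is a bona fide element of $\BC[[x]]$ because every summand $x^{p_{j,\alpha}}\mathcal{B}_\sigma^{(m)}[|b_{j,\alpha}|]$ has valuation $\ge 1$, so each coefficient of $A(x)$ is a finite sum.
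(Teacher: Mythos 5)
Your proposal is correct and follows essentially the same route as the paper: rewrite the equation with $L(k,x\partial_x)$ on the left, solve the coefficient recursion using (N), bound $|L(k,l)|\geq c_0\phi(k,l)\geq c_0k^m$ via Lemma~\ref{lema22}, control $k^j[l_2]_\alpha/\phi(k,l)$ by Proposition~\ref{prop35}, and absorb the factorial shift with the combinatorial inequality $[l_1-p-m]_+!\,[l_2-m]_+!\leq[l-p-m]_+!$ together with a bounded correction comparing $l!/(l-p)!$ with $[l-m]_+!/[l-p-m]_+!$, exactly the two steps the paper uses. The only difference is cosmetic: you make the Neumann-series decomposition $w=\sum_n v_n$ explicit and iterate, whereas the paper derives the single majorant relation ${\mathcal B}_{\sigma}^{(m)}[|w|]\ll\frac{1}{c_0k^m}{\mathcal B}_{\sigma}^{(m)}[|g|]+\frac{C_1}{c_0}\sum_{(j,\alpha)\in\Lambda}x^{p_{j,\alpha}}{\mathcal B}_{\sigma}^{(m)}[|b_{j,\alpha}|]\,{\mathcal B}_{\sigma}^{(m)}[|w|]$ and inverts it by the same geometric series defining $A(x)$.
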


\begin{proof}
Take any $k \in \BN^*$ and $g(x) \in \BC[[x]]$.
For the sake of simplicity, we adopt the following notation: $[\rho]_0=1$;
$[\rho]_{\alpha}=\rho(\rho-1) \cdots (\rho-\alpha+1)$ (for
$\alpha \geq 1$). Then, equation (\ref{e314}) is written in the form
\begin{equation}\label{e316}
    L(k, x \partial_x)w=g(x)+ \sum_{(j,\alpha) \in \Lambda}
          x^{p_{j,\alpha}} b_{j,\alpha}(x) k^j 
         [x \partial_x]_{\alpha}w. 
\end{equation}
\par
   We set
$$
    w(x)= \sum_{l \geq 0}w_lx^l, \quad 
    g(x)= \sum_{l \geq 0}g_lx^l, \quad 
    b_{j,\alpha}(x)= \sum_{i \geq 0}b_{j,\alpha,i}x^i.
$$
Then, by substituting these series into (\ref{e316}) and 
comparing the coefficients of $x^l$ at both sides of the 
equation, this is decomposed into the following 
recurrence formulas:
$$
    L(k,l)w_l= g_l+ \sum_{(j,\alpha) \in \Lambda}
                 \sum_{i=0}^{l-p_{j,\alpha}}b_{j,\alpha,i}
          k^j [l-p_{j,\alpha}-i]_{\alpha}
             w_{l-p_{j,\alpha}-i}, \quad l \in \BN.
$$
Since $L(k,l) \ne 0$ for all $(k,l) \in \BN^* \times \BN$, $w_l$ is determined inductively for $l=0,1,2,\ldots$ Hence, equation (\ref{e316}) has a unique formal solution $w(x) \in \BC[[x]]$.

Let us show (\ref{e315}). By the assumptions (N), (GP), Lemma~\ref{lema22}
and Proposition~\ref{prop35} we have
\begin{align*}
    |w_l| &\leq \frac{1}{L(k,l)} \biggl( |g_l|
     + \sum_{(j,\alpha) \in \Lambda}
                 \sum_{i=0}^{l-p_{j,\alpha}}|b_{j,\alpha,i}|
          k^j [l-p_{j,\alpha}-i]_{\alpha}
             |w_{l-p_{j,\alpha}-i}| \biggr) \\
     &\leq \frac{1}{c_0 \phi(k,l)} \biggl( |g_l|
     + \sum_{(j,\alpha) \in \Lambda}
                 \sum_{i=0}^{l-p_{j,\alpha}}|b_{j,\alpha,i}|
          k^j (l-p_{j,\alpha})^{\alpha}
             |w_{l-p_{j,\alpha}-i}| \biggr) \\
    &\leq \frac{1}{c_0 k^m}|g_l|+ \frac{1}{c_0}
         \sum_{(j,\alpha) \in \Lambda}
                 \sum_{i=0}^{l-p_{j,\alpha}}|b_{j,\alpha,i}|
           \frac{l!^{\sigma-1}}{(l-p_{j,\alpha})!^{\sigma-1}}
             |w_{l-p_{j,\alpha}-i}| \\
    &\leq \frac{1}{c_0 k^m}|g_l|+ \frac{C_1}{c_0}
         \sum_{(j,\alpha) \in \Lambda}
                 \sum_{i=0}^{l-p_{j,\alpha}}|b_{j,\alpha,i}|
      \frac{[l-m]_+!^{\sigma-1}}{[l-p_{j,\alpha}-m]_+!^{\sigma-1}}
             |w_{l-p_{j,\alpha}-i}|
\end{align*}
for some constant $C_1>0$. Taking into account that
$$[i-m]_+!^{\sigma-1}[l-p_{j,\alpha}-i-m]_+!^{\sigma-1}\le [l-p_{j,\alpha}-m]_+!^{\sigma-1}$$
we conclude
$$
    {\mathcal B}_{\sigma}^{(m)}[|w|]
    \ll \frac{1}{c_0 k^m}{\mathcal B}_{\sigma}^{(m)}[|g|]
    + \frac{C_1}{c_0}\sum_{(j,\alpha) \in \Lambda} x^{p_{j,\alpha}}
      {\mathcal B}_{\sigma}^{(m)}[|b_{j,\alpha}|]
          \times {\mathcal B}_{\sigma}^{(m)}[|w|],
$$
which yields (\ref{e315}) by setting $A(x)$ as in (\ref{e899}).
%$$
%     A(x)= \frac{1}{c_0} \sum_{n \geq 0}
%      \biggl( \frac{C_1}{c_0} \sum_{(j,\alpha) \in \Lambda} 
%             x^{p_{j,\alpha}}
%       {\mathcal B}_{\sigma}^{(m)}[|b_{j,\alpha}|](x) \biggr)^n
%$$
%we have (\ref{e315}).  
\end{proof}

\section{Proof of (ii) of Theorem~\ref{maintheorem}}\label{sec4}

In this section, we give a proof of (ii) of Theorem~\ref{maintheorem} in the case $\sigma=\sigma_0$ and $s\ge s_0$, where $s_0$ is determined in (\ref{e25}). The first lemma provides a reformulation of the index $s_0$, which leans on the following construction. 

For $0 \leq \mu <m$ we define
$$
    J_{\mu}=\{(i,\bnu) \in \BN \times \BN^N \,;\,
         i+|\bnu| \geq 2, |\bnu| \geq 1, 
                 (\partial_x^{\mu}a_{i,\bnu})(0) \ne 0 \}.
$$
For $\mu \in \BN$ and 
$\bnu=\{\nu_{j,\alpha} \}_{(j,\alpha) \in I_m}$ satisfying 
$|\bnu| \geq 1$ we set
$$K_{\bnu}=\{(j,\alpha) \in I_m \,;\, \nu_{j,\alpha}>0 \}, \quad m_{\bnu,\mu} = \max_{(j,\alpha) \in K_{\bnu}}\bigl( j+ \max\{\alpha, \mu+\sigma_0(\alpha-\mu)\}\bigr).$$
If $\mu \geq m$ we have $m_{\bnu,\mu} \leq m$ for any $\bnu$
with $|\bnu| \geq 1$.  We have

\begin{lemma}\label{lema39} The index $s_0$ in (\ref{e25}) can be expressed
in the form
\begin{equation}\label{e317}
   s_0= 1+ \max \biggl[\,0 \,, \max_{0 \leq \mu<m}
            \Bigl( \sup_{(i,\bnu) \in J_{\mu}}
              \frac{m_{\bnu,\mu}-m}{i+|\bnu|-1} \Bigr)
             \biggr].
\end{equation}
\end{lemma}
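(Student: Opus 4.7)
The proof hinges on expressing $L_{\mu,j,\alpha}$ in terms of the auxiliary indexing sets $J_{\mu}$ and $K_{\bnu}$. Writing out the Taylor expansion $R_2(t,x,\bz)=\sum_{i+|\bnu|\ge 2}a_{i,\bnu}(x)t^i\bz^{\bnu}$ and differentiating termwise, one sees that $(\partial_{z_{j,\alpha}}\partial_x^{\mu}R_2)(t,0,\bz)$ is a sum over precisely those $(i,\bnu)$ with $(i,\bnu)\in J_{\mu}$ and $\nu_{j,\alpha}\ge 1$ (equivalently, $(j,\alpha)\in K_{\bnu}$), each term of total degree $i+|\bnu|-1$ in $(t,\bz)$. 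Therefore
$$L_{\mu,j,\alpha}=\min\bigl\{i+|\bnu|-1 \,;\, (i,\bnu)\in J_{\mu},\,(j,\alpha)\in K_{\bnu}\bigr\},$$
with the convention $\min\emptyset=+\infty$. This dictionary is the bridge between formulas (\ref{e25}) and (\ref{e317}).

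Next, I would rewrite the numerator in (\ref{e25}) as
$j+\mu+\sigma_0(\alpha-\mu)-m=j+\alpha+(\sigma_0-1)(\alpha-\mu)-m$
and observe that, since $(j,\alpha)\in I_m$ forces $j+\alpha\le m$, this quantity is nonpositive whenever $\alpha\le\mu$. Consequently only pairs with $\alpha>\mu$ can contribute to the outer $\max[0,\cdot]$. The same dichotomy controls $m_{\bnu,\mu}$: the inner maximum selects $\alpha$ when $\alpha\le\mu$, and $\mu+\sigma_0(\alpha-\mu)$ when $\alpha>\mu$.

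Denote the fractions in (\ref{e25}) and (\ref{e317}) by $T_1(\mu,j,\alpha)$ and $T_2(\mu,i,\bnu)$, so that the claim reduces to $[\max T_1]_+=[\max T_2]_+$. For the inequality $\le$, fix $(\mu,j,\alpha)$ with $L_{\mu,j,\alpha}<\infty$ and choose $(i,\bnu)\in J_{\mu}$ with $(j,\alpha)\in K_{\bnu}$ realizing this minimum. If the numerator of $T_1$ is negative there is nothing to prove; otherwise necessarily $\alpha>\mu$, so $m_{\bnu,\mu}\ge j+\mu+\sigma_0(\alpha-\mu)$, and dividing by $i+|\bnu|-1=L_{\mu,j,\alpha}$ yields $T_2(\mu,i,\bnu)\ge T_1(\mu,j,\alpha)$. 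For the reverse inequality, fix $(i,\bnu)\in J_{\mu}$ and pick $(j^{\ast},\alpha^{\ast})\in K_{\bnu}$ attaining $m_{\bnu,\mu}$. If $\alpha^{\ast}\le\mu$ then $m_{\bnu,\mu}=j^{\ast}+\alpha^{\ast}\le m$ and $T_2\le 0$; if $\alpha^{\ast}>\mu$ and $T_2>0$, the bound $L_{\mu,j^{\ast},\alpha^{\ast}}\le i+|\bnu|-1$ combined with $m_{\bnu,\mu}-m=j^{\ast}+\mu+\sigma_0(\alpha^{\ast}-\mu)-m\ge 0$ gives $T_2\le T_1(\mu,j^{\ast},\alpha^{\ast})$. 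Taking $\max$ on each side completes the proof.

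The main obstacle is purely notational: one must carefully distinguish the two sign regimes $\alpha\le\mu$ and $\alpha>\mu$ consistently on both sides, and invoke $j+\alpha\le m$ from $(j,\alpha)\in I_m$ at the right moments to neutralize the spurious cases. Beyond this bookkeeping, the identity is a direct consequence of the correspondence between $L_{\mu,j,\alpha}$ and the sets $J_{\mu}$, $K_{\bnu}$.
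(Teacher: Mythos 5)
Your proof is correct and follows essentially the same route as the paper: the identification $L_{\mu,j,\alpha}=\min\{i+|\bnu|-1\,;\,(i,\bnu)\in J_{\mu},\,(j,\alpha)\in K_{\bnu}\}$ together with the dichotomy that the numerator is $\le 0$ when $\alpha\le\mu$ and coincides with $j+\max\{\alpha,\mu+\sigma_0(\alpha-\mu)\}-m$ when $\alpha>\mu$, the only difference being that you organize the comparison as two inequalities while the paper rewrites the nested maxima directly. The single slip (claiming ``otherwise necessarily $\alpha>\mu$'' when the numerator could be exactly zero with $\alpha\le\mu$) is harmless, since such terms contribute nothing beyond the outer $\max[0,\cdot]$.
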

\begin{proof} Set 
$f(\mu,j,\alpha)=j+\mu+ \sigma_0(\alpha-\mu)-m$: then $s_0$ is
given by (\ref{e25}) in the form
$$
    s_0 =1+\max \biggl[ \, 0, \, \max_{0 \leq \mu<m}
          \Bigl( \max_{(j,\alpha) \in I_m}
          \frac{f(\mu,j,\alpha)}{L_{\mu,j,\alpha}} 
          \Bigr) \biggr].
$$
Therefore, $s_0$ is determined only by $(\mu,j,\alpha)$ satisfying 
$f(\mu,j,\alpha)>0$.  Since 
$(\partial_{z_{j,\alpha}} \partial_x^{\, \, \mu}R_2)(t,0,\bz)$ 
is expressed in the form
$$
    ( \partial_{z_{j,\alpha}}
       \partial_x^{\, \, \mu}R_2)(t,0,\bz)
    = \sum_{(i,\bnu) \in J_{\mu}, \nu_{j,\alpha}>0}
            \nu_{j,\alpha}(\partial_x^{\mu}a_{i,\bnu})(0)
      t^i \bz^{\bnu-e_{j,\alpha}}
$$
(where $e_{j,\alpha} \in \BN^N$ is an $N$-vector defined by 
$\{\nu_{i,\beta} \}_{(i,\beta) \in I_m}$ with $\nu_{j,\alpha}=1$
and $\nu_{i,\beta}=0$ for $(i,\beta) \ne (j,\alpha)$), 
by the definition of $L_{\mu,j,\alpha}$ we have
\begin{align*}
    s_0 &=1+\max \biggl[ \, 0, \, \max_{0 \leq \mu<m}
          \Bigl( \max_{(j,\alpha) \in I_m} \Bigl(
            \sup_{(i,\bnu) \in J_{\mu}, \nu_{j,\alpha}>0}
              \,\frac{f(\mu,j,\alpha)}
                   {i+|\bnu|-1} \Bigr) \Bigr)
           \biggr] \\
    &=1+\max \biggl[ \, 0, \, \max_{0 \leq \mu<m}
          \Bigl( 
            \sup_{(i,\bnu) \in J_{\mu}} \Bigl(
              \max_{(j,\alpha) \in K_{\bnu}}
              \frac{f(\mu,j,\alpha)}
                   {i+|\bnu|-1} \Bigr) \Bigr)
           \biggr].
\end{align*}
We set $g(\mu,j,\alpha)= 
j+ \max\{\alpha, \mu+\sigma_0(\alpha-\mu)\}-m$. 
If $\alpha \leq \mu$ we have $f(\mu,j,\alpha) \leq 0$
and $g(\mu,j,\alpha)\leq 0$. If $\alpha > \mu$ we have
$g(\mu,j,\alpha)=f(\mu,j,\alpha)$.
Therefore, $s_0$ is determined only by $(\mu,j,\alpha)$ with
$\alpha>\mu$ and 
\begin{align*}
    s_0 
    &=1+\max \biggl[ \, 0, \, \max_{0 \leq \mu<m}
          \Bigl( 
            \sup_{(i,\bnu) \in J_{\mu}} \Bigl(
              \max_{(j,\alpha) \in K_{\bnu}}
              \frac{g(\mu,j,\alpha)}
                   {i+|\bnu|-1} \Bigr) \Bigr) \biggr].
\end{align*}
This proves (\ref{e317}). 
\end{proof}

Suppose the conditions (N), 
(GP) and $\sigma_0>1$ hold. Then, we have $\Lambda_{out} \ne \emptyset$. 
Let
$$
    u(t,x)= \sum_{k \geq 1} u_k(x) t^k 
        \in (\BC[[x]])[[t]]
$$
be the unique formal solution of (\ref{e11}).
Then, $u_k(x)$ ($k=1,2,\ldots$) are determined as the solutions
of the recurrence formulas:
$$
    C(x; k,x \partial_x)u_k = f_k(x), 
    \quad k=1,2,\ldots
$$
with $f_1(x)=a(x)$ and for $k \geq 2$
$$
    f_k(x) = \sum_{2 \leq i+|\bnu| \leq k}a_{i,\bnu}(x)
           \sum_{i+|k(\bnu)|=k}
        \prod_{(j,\alpha) \in I_m} 
        \prod_{h=1}^{\nu_{j,\alpha}} (k_{j,\alpha}(h))^j
               \partial_x^{\alpha}u_{k_{j,\alpha}(h)}, 
$$
where $\bnu=\{\nu_{j,\alpha} \}_{(j,\alpha) \in I_m}$
and $|k(\bnu)| =\sum_{(j,\alpha) \in I_m}(k_{j,\alpha}(1)
          + \cdots+k_{j,\alpha}(\nu_{j,\alpha}))$.
By Proposition~\ref{prop38} we have
\begin{equation}\label{e42}
    {\mathcal B}_{\sigma_0}^{(m)}[|u_k|](x)
    \ll \frac{A(x)}{k^m}
        {\mathcal B}_{\sigma_0}^{(m)}[|f_k|](x),
    \quad k=1,2,\ldots .
\end{equation}
   Since $f_1(x)$ is holomorphic at $x=0$, by (\ref{e42}) we see that
${\mathcal B}_{\sigma_0}^{(m)}[|u_1|](x)$ is holomorphic at $x=0$.
We can show by induction on $k$ that 
${\mathcal B}_{\sigma_0}^{(m)}[|u_k|](x)$ ($k \geq 1$) are all 
holomorphic at $x=0$.  Thus, we have that $u_k(x) \in G\{x \}_{\sigma_0}$ for all $k \geq 1$.

\subsection{On a majorant equation}

Let $0<R \leq 1$ be small enough so that $A(x) \in {\mathcal O}(\overline{D}_R)$, 
$a_{i,\bnu}(x) \in {\mathcal O}(\overline{D}_R)$ ($i+|\bnu| \geq 2$)
and ${\mathcal B}_{\sigma}^{(m)}[|u_1|](x) 
                  \in {\mathcal O}(\overline{D}_R)$. We take $A>0$ so that
\begin{equation}\label{e43}
     {\mathcal B}_{\sigma_0}^{(m-\mu)}[\partial_x^{\alpha}|u_1|](x)
      \ll \frac{A}{R-x}, \quad
       0\leq \mu \leq m, \, (j,\alpha) \in I_m,
\end{equation}
and $A_{i,\bnu} \geq 0$ ($i+|\bnu| \geq 2$) such that
\begin{equation}\label{e44}
     A(x) {\mathcal B}_{\sigma_0}^{(m)}[|a_{i,\bnu}|](x)
            \ll \frac{A_{i,\bnu}}{R-x}
     \quad \mbox{and} \quad
     \sum_{i+|\bnu| \geq 2} A_{i,\bnu}t^i Y^{|\bnu|} \in \BC\{t,Y \}.
\end{equation}
We take $L \in \BN^*$ so that $L \geq m\sigma_0$. Then we have
$L \geq j+\max\{\alpha, \mu+\sigma_0(\alpha-\mu) \}$ for any
$0 \leq \mu<m$ and $(j,\alpha) \in I_m$.
Set $H=(3m e \sigma_0)^{m\sigma_0}$. Under these notations, let
us consider the functional equation
\begin{equation}\label{e45}
    Y= \frac{A}{(R-x)^{m\sigma_0}}t + \frac{1}{(R-x)^{m\sigma_0}}
        \sum_{i+|\bnu| \geq 2}
         \frac{A_{i,\bnu}(i+|\bnu|)^L}
           {(R-x)^{m\sigma_0(3i+2|\bnu|-3)}} t^i (HY)^{|\bnu|}
\end{equation}
with respect to $(t,Y)$, where $x \in D_R$ is regarded as a 
parameter. By the implicit function theorem we see that for any 
$x \in D_R$ the equation (\ref{e45}) has a unique holomorphic solution 
$Y=Y(t)$ in a neighborhood of $t=0$ satisfying $Y(0)=0$. The coefficients of the Taylor expansion $Y=\sum_{k \geq 1}Y_k t^k$, are determined by the 
following recurrence formulas:
\begin{equation}\label{e46}
   Y_1= \frac{A}{(R-x)^{m\sigma_0}},
\end{equation}
and for $k \geq 2$
\begin{equation}\label{e47}
    Y_k  = \frac{1}{(R-x)^{m\sigma_0}}
           \sum_{2 \leq i+|\bnu| \leq k}
      \frac{A_{i,\bnu}(i+|\bnu|)^L}
           {(R-x)^{m\sigma_0(3i+2|\bnu|-3)}} \biggl[\sum_{i+|k(\bnu)|=k}
        \prod_{(j,\alpha) \in I_m} 
        \prod_{h=1}^{\nu_{j,\alpha}} HY_{k_{j,\alpha}(h)}
           \biggr].
\end{equation}
Moreover, by induction on $k$ we can show that $Y_k$ has the
form
$$
      Y_k= \frac{C_k}{(R-x)^{m\sigma_0(3k-2)}},
     \quad k=1,2,\ldots
$$
where $C_1=A$ and $C_k \geq 0$ ($k \geq 2$) are constants
which are independent of the parameter $x$.

\begin{lemma}\label{lema41}
Assume that $s \geq s_0$. Then, for any $k=1,2,\ldots$ we have
\begin{equation}\refstepcounter{equation}\label{e48}
    {\mathcal B}_{\sigma_0}^{(m-\mu)}
          [k^j\partial_x^{\alpha}|u_k|](x)
      \ll \frac{(k-1)!^{s-1}} 
   {k^{L-j-\max \{\alpha, \mu+\sigma_0(\alpha-\mu) \}}} H Y_k, \tag*{(\theequation)$_k$}
\end{equation}
for any $0\leq \mu \leq m$ and $(j,\alpha) \in I_m$.
\end{lemma}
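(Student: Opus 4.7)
The plan is to proceed by induction on $k$, establishing the principal case $\mu=0,\,(j,\alpha)=(0,0)$, namely $\mathcal{B}^{(m)}_{\sigma_0}[|u_k|]\ll(k-1)!^{s-1}k^{-L}HY_k$, and then deriving the remaining cases via the Nagumo-type inequalities of Corollary \ref{coro34}.

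\textbf{Base case.} For $k=1$, the estimate follows directly from (\ref{e43}), since $(k-1)!^{s-1}=1$, $k^{L-j-\max\{\cdots\}}=1$, $HY_1=HA/(R-x)^{m\sigma_0}$, and the majorization $A/(R-x)\ll HA/(R-x)^{m\sigma_0}$ holds because $R\le 1$, $H\ge 1$, and $m\sigma_0\ge 1$.

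\textbf{Inductive step.} Assume $(4.8)_l$ for $l<k$. For the principal case, start from (\ref{e42}) and substitute the defining recurrence for $f_k$. Lemma \ref{lema32} decomposes $\mathcal{B}^{(m)}_{\sigma_0}$ over products, so the inductive hypothesis (in its $\mu=0$ form) bounds each factor $\mathcal{B}^{(m)}_{\sigma_0}[k_{j,\alpha}(h)^j|\partial_x^\alpha u_{k_{j,\alpha}(h)}|]$ by $(k_{j,\alpha}(h)-1)!^{s-1}k_{j,\alpha}(h)^{-(L-j-\sigma_0\alpha)}HY_{k_{j,\alpha}(h)}$, while (\ref{e44}) handles the coefficient factor $A(x)\mathcal{B}^{(m)}_{\sigma_0}[|a_{i,\bnu}|]$. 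Comparing term-by-term with the recurrence (\ref{e47}) for $Y_k$ reduces the desired inequality to the combinatorial estimate
\begin{equation*}
k^{L-m}\prod_{(j,\alpha)}\prod_{h=1}^{\nu_{j,\alpha}}(k_{j,\alpha}(h)-1)!^{s-1}\le(k-1)!^{s-1}(i+|\bnu|)^L\prod_{(j,\alpha)}\prod_{h=1}^{\nu_{j,\alpha}}k_{j,\alpha}(h)^{L-j-\sigma_0\alpha},
\end{equation*}
for each $(i,\bnu)$ with $i+|\bnu|\ge 2$ and each composition $i+|k(\bnu)|=k$; the $(R-x)$-factor adjustment is immediate because $R\le 1$ and $m\sigma_0(3i+2|\bnu|-2)\ge 1$. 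The remaining cases with $(\mu,j,\alpha)\ne(0,0,0)$ follow from the principal estimate by applying Corollary \ref{coro34} with $a=m\sigma_0(3k-2)$; the prefactor constants are absorbed into $H=(3me\sigma_0)^{m\sigma_0}$.

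The main obstacle is the combinatorial inequality displayed above, whose proof rests on the reformulation of $s_0$ from Lemma \ref{lema39}: since $s\ge s_0$, every $(i,\bnu)\in J_0$ and $(j,\alpha)\in K_{\bnu}$ satisfies $j+\sigma_0\alpha-m\le(s-1)(i+|\bnu|-1)$, providing the required slack in the exponents $L-j-\sigma_0\alpha$. Combining this with the elementary factorial bound $\prod_h(k_{j,\alpha}(h)-1)!\le(k-i-|\bnu|)!$ and the arithmetic relation $k\le(i+|\bnu|)\max_h k_{j,\alpha}(h)$ closes the argument.
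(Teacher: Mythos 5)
Your induction scheme does not close, because reducing everything to the $\mu=0$ case of \ref{e48} discards exactly the information that the index $s_0$ encodes. In your inductive step each factor is bounded by the $\mu=0$ form of the hypothesis, with per-factor exponent $L-j-\sigma_0\alpha$, and you then need your displayed combinatorial inequality for \emph{every} $(i,\bnu)$ with $a_{i,\bnu}(x)\not\equiv 0$. But the slack you invoke from Lemma~\ref{lema39}, namely $(s-1)(i+|\bnu|-1)\geq j+\sigma_0\alpha-m$, is only available for $(i,\bnu)\in J_0$, i.e.\ when $a_{i,\bnu}(0)\neq 0$; for a coefficient vanishing to order $\mu\geq 1$ at $x=0$ the definition of $s_0$ (through $L_{\mu,j,\alpha}$, equivalently $J_\mu$) only yields the weaker bound with $m_{\bnu,\mu}$ in place of $m_{\bnu,0}$. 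Concretely, in the paper's own example with $m=4$, $\sigma_0=2$ and nonlinear term $x^{3}(\partial_x^{3}u)^{2}$ (so $i=0$, $|\bnu|=2$, $j=0$, $\alpha=\mu=3$) one has $s_0=1$, while your inequality with $s=1$ reads $k^{L-4}\leq 2^{L}(k_1k_2)^{L-6}$ for all $k_1+k_2=k$, which fails for $k_1=1$ and $k$ large, whatever fixed $L$ is chosen. The paper's proof avoids this by expanding $a_{i,\bnu}(x)=\sum_{\mu=0}^{m-1}a_{i,\bnu,\mu}x^{\mu}+x^{m}a_{i,\bnu,m}(x)$, pulling each $x^{\mu}$ through the identity ${\mathcal B}^{(m)}_{\sigma_0}[x^{\mu}f]=x^{\mu}{\mathcal B}^{(m-\mu)}_{\sigma_0}[f]$ of Lemma~\ref{lema32}, and invoking the induction hypothesis \emph{at level $\mu$}, with the better exponent $L-j-\max\{\alpha,\mu+\sigma_0(\alpha-\mu)\}$; the estimate (\ref{e411}) of Lemma~\ref{lema42}, stated with $m_{\bnu,\mu}$, is then exactly what $s\geq s_0$ supplies. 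This is the whole reason the lemma is formulated for the full family $0\leq\mu\leq m$ and the operators ${\mathcal B}^{(m-\mu)}_{\sigma_0}$ were introduced, and it is the step your plan is missing.

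There is also a quantitative problem in your second stage. From the $(\mu,j,\alpha)=(0,0,0)$ case of \ref{e48}, whose right-hand side already behaves like $HC_k(R-x)^{-m\sigma_0(3k-2)}$, Corollary~\ref{coro34} with $a=m\sigma_0(3k-2)$ produces, for $\alpha>\mu$, a majorant with exponent $m\sigma_0(3k-2)+\mu+\sigma_0(\alpha-\mu)$, which can reach $m\sigma_0(3k-1)$; a power $(R-x)^{-(a+b)}$ with $b>0$ cannot be majorized by a constant times $(R-x)^{-a}$, so the result is not $\ll HY_k$, and the extra constants would moreover multiply the $H$ already present. The paper circumvents this by first proving the strictly stronger intermediate estimate (\ref{e413}), with the spare factor $(R-x)^{m\sigma_0}$ (exponent $m\sigma_0(3k-3)$) and without $H$, and only then applying Corollary~\ref{coro34}, absorbing its constants into the single factor $H=(3me\sigma_0)^{m\sigma_0}$ coming from (\ref{e47}).
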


\subsection{Proof of Lemma~\ref{lema41}}
\begin{proof}
In the case $k=1$, by (\ref{e43}) and (\ref{e46}) we have
$$
   {\mathcal B}_{\sigma_0}^{(m-\mu)}
      [1^j\partial_x^{\alpha}|u_1|](x)
      \ll \frac{A}{R-x}
      \ll \frac{A}{(R-x)^{m\sigma_0}}=Y_1 \ll HY_1
$$
for any $0\leq \mu \leq m$ and $(j,\alpha) \in I_m$. 
Hence we have \ref{e48} for $k=1$.  Let us show the general case 
by induction on $k$.

Let $k \geq 2$, and suppose that the equation is already proved for all $1 \leq p<k$. We express 
$$
    a_{i,\bnu}(x)= a_{i,\bnu,0}+a_{i,\bnu,1}x+
        \cdots+a_{i,\bnu,m-1}x^{m-1}+ x^m a_{i,\bnu,m}(x).
$$
Then, 
$$
    f_k(x) = \sum_{\mu=0}^m x^{\mu}
           \sum_{2 \leq i+|\bnu| \leq k}a_{i,\bnu,\mu}
           \sum_{i+|k(\bnu)|=k}
        \prod_{(j,\alpha) \in I_m} 
        \prod_{h=1}^{\nu_{j,\alpha}} (k_{j,\alpha}(h))^j
                  \partial_x^{\alpha}u_{k_{j,\alpha}(h)}
$$
and so by Lemma~\ref{lema32}, and setting ${\mathcal A}_{i,\bnu,\mu}=|a_{i,\bnu,\mu}|$
for $0 \leq \mu \leq m-1$, and 
${\mathcal A}_{i,\bnu,m}={\mathcal B}_{\sigma_0}[|a_{i,\bnu,m}|]$
we have 

$$
    {\mathcal B}^{(m)}_{\sigma_0}[|f_k|] 
    \ll  \sum_{\mu=0}^m x^{\mu}
           \sum_{2 \leq i+|\bnu| \leq k}
             {\mathcal A}_{i,\bnu,\mu}
           \biggl[
           \sum_{i+|k(\bnu)|=k}
        \prod_{(j,\alpha) \in I_m} 
        \prod_{h=1}^{\bnu_{j,\alpha}}
             {\mathcal B}^{(m-\mu)}_{\sigma_0}
             \bigl[(k_{j,\alpha}(h))^j
     \partial_x^{\alpha}u_{k_{j,\alpha}(h)} \bigr] \biggr].
$$
Thus, by (\ref{e42}), the definition of $m_{\bnu,\mu}$ and the induction hypothesis we have

\begin{equation}\label{e49}
   {\mathcal B}^{(m)}_{\sigma_0}[|u_k|]
      \ll \frac{A(x)}{k^m}\sum_{\mu=0}^m x^{\mu}
           \sum_{2 \leq i+|\bnu| \leq k}
           {\mathcal A}_{i,\bnu,\mu} \biggl[ 
           \sum_{i+|k(\bnu)|=k}
        \prod_{(j,\alpha) \in I_m} 
        \prod_{h=1}^{\nu_{j,\alpha}} 
        \frac{(k_{j,\alpha}(h)-1)!^{s-1}}{(k_{j,\alpha}(h)
         )^{L-m_{\bnu,\mu}}}
           HY_{k_{j,\alpha}(h)} \biggr].
\end{equation}

Observe the condition $L-m_{\bnu,\mu} \geq 0$ follows from the choice of $L$ so that
$L \geq \sigma_0m$.

\begin{lemma}\label{lema42}
Under the above situation, $|\bnu| \geq 1$ and 
${\mathcal A}_{i,\bnu,\mu} \ne 0$ 
(or ${\mathcal A}_{i,\bnu,m}(x) \not\equiv 0$) we have
\begin{align}
    &\frac{(k-i-|\bnu|)!^{s-1}}{k^{L+m-m_{\bnu,\mu}}}
      \leq \frac{(k-1)!^{s-1}(i+|\bnu|)^{[m_{\bnu,\mu}-m]_+}}{k^L}, \label{e410}\\
    &\frac{1}{k^m} \prod_{(j,\alpha) \in I_m} 
        \prod_{h=1}^{\nu_{j,\alpha}} 
        \frac{(k_{j,\alpha}(h)-1)!^{s-1}}
               {(k_{j,\alpha}(h) )^{L-m_{\bnu,\mu}}}
         \leq \frac{(k-1)!^{s-1}(i+|\bnu|)^L}{k^L}.  \label{e411}
\end{align}
\end{lemma}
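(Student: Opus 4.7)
The plan is to establish both estimates by splitting on whether $m_{\bnu,\mu} \leq m$ (Case A, where the bracketed term $[m_{\bnu,\mu}-m]_+$ vanishes) or $m_{\bnu,\mu} > m$ (Case B). Only Case B uses the hypothesis $s \geq s_0$, through the inequality $(s-1)(i+|\bnu|-1) \geq m_{\bnu,\mu}-m$ supplied by Lemma~\ref{lema39}; one checks that Case B forces $\mu < m$ (since $m_{\bnu,\mu} \leq m$ whenever $\mu \geq m$, as noted in the paper), so the hypothesis $\mathcal{A}_{i,\bnu,\mu} \neq 0$ amounts to $(i,\bnu) \in J_\mu$, making Lemma~\ref{lema39} applicable. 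Throughout I use the routine fact $L \geq m\sigma_0 \geq m_{\bnu,\mu}$ (so $L-m_{\bnu,\mu} \geq 0$), and three elementary inputs: the factorial ratio estimate
\begin{equation*}
\frac{(k-1)!}{(k-i-|\bnu|)!} = \prod_{r=1}^{i+|\bnu|-1}(k-r) \geq \Bigl(\frac{k}{i+|\bnu|}\Bigr)^{i+|\bnu|-1} \quad (k \geq i+|\bnu|),
\end{equation*}
which holds because each factor $k-r$ exceeds $k/(i+|\bnu|)$ in this range; the balancing bound $k = i + \sum_l k_{j,\alpha}(h) \leq (i+|\bnu|)\prod_l k_{j,\alpha}(h)$, using $k_{j,\alpha}(h) \geq 1$; and the multinomial inequality $\prod_l(k_{j,\alpha}(h)-1)! \leq (k-i-|\bnu|)!$.

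For (\ref{e410}), rearrange the inequality to $\bigl[(k-1)!/(k-i-|\bnu|)!\bigr]^{s-1} \geq k^{m_{\bnu,\mu}-m}(i+|\bnu|)^{-[m_{\bnu,\mu}-m]_+}$. Case A is essentially trivial: the LHS is $\geq 1$ and the RHS is $\leq 1$. In Case B, raise the factorial ratio estimate to the power $s-1$ to obtain the lower bound $(k/(i+|\bnu|))^{(s-1)(i+|\bnu|-1)}$, then use $k/(i+|\bnu|) \geq 1$ together with the Lemma~\ref{lema39} exponent bound to replace the exponent $(s-1)(i+|\bnu|-1)$ by $m_{\bnu,\mu}-m$, which is precisely the target.

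For (\ref{e411}), the rearranged claim reads $k^{L-m}\prod_l(k_l-1)!^{s-1} \leq (k-1)!^{s-1}(i+|\bnu|)^L \prod_l k_l^{L-m_{\bnu,\mu}}$. In Case A, combine the multinomial inequality on the factorials, the balancing bound applied to $k^{L-m} \leq (i+|\bnu|)^{L-m}\prod_l k_l^{L-m}$, and the monotonicity $\prod_l k_l^{L-m} \leq \prod_l k_l^{L-m_{\bnu,\mu}}$ (since $L-m \leq L-m_{\bnu,\mu}$). In Case B, first chain the multinomial and factorial ratio bounds to derive the sharper estimate $\prod_l(k_l-1)!^{s-1} \leq (k-1)!^{s-1}((i+|\bnu|)/k)^{m_{\bnu,\mu}-m}$, which collapses the desired inequality to $(k/\prod_l k_l)^{L-m_{\bnu,\mu}} \leq (i+|\bnu|)^{L-m_{\bnu,\mu}+m}$, now immediate from the balancing bound. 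The main bookkeeping subtlety is precisely Case B of (\ref{e411}): one must route the excess Gevrey growth $(s-1)(i+|\bnu|-1) \geq m_{\bnu,\mu}-m$ into the factorial ratio \emph{before} transferring the remaining power of $k$ via the balancing bound, since otherwise the residual exponent $L-m_{\bnu,\mu}$ on $\prod_l k_l$ is too small to absorb the $k$-power on the left.
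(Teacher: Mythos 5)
Your proof is correct and follows essentially the same route as the paper: the same case split on $m_{\bnu,\mu}\lessgtr m$, the same use of Lemma~\ref{lema39} via $(s-1)(i+|\bnu|-1)\geq m_{\bnu,\mu}-m$, the multinomial bound $\prod_l(k_l-1)!\leq(k-i-|\bnu|)!$, and the balancing inequality $k\leq(i+|\bnu|)\prod_l k_l$. The only differences are cosmetic: you bound the factorial ratio by $k-r\geq k/(i+|\bnu|)$ instead of the paper's $\bigl(1+\tfrac{i+|\bnu|-1}{k-i-|\bnu|+1}\bigr)$ manipulation, and you re-derive the needed estimate inside (\ref{e411}) rather than quoting (\ref{e410}) directly.
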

\begin{proof} Let us show (\ref{e410}).  
If $m \geq m_{\bnu,\mu}$ we have $[m_{\bnu,\mu}-m]_+=0$ and so 
$$
   \frac{(k-i-|\bnu|)!^{s-1}}{k^{L+m-m_{\bnu,\mu}}}
   \leq \frac{(k-1)!^{s-1}}{k^L}
    = \frac{(k-1)!^{s-1}(i+|\bnu|)^{[m_{\bnu,\mu}-m]_+}}{k^L}.
$$
If $m_{\bnu,\mu}>m$, by the assumption $s \geq s_0$ and Lemma~\ref{lema39} we have $(i+|\bnu|-1)(s-1) \geq m_{\bnu,\mu}-m$
and so we have
\begin{align*}
  \frac{(k-i-|\bnu|)!^{s-1}}{k^{L+m-m_{\bnu,\mu}}} & \leq \frac{(k-1)!^{s-1}}{k^L} 
      \frac{k^{m_{\bnu,\mu}-m}}{(k-i-|\bnu|+1)^{m_{\bnu,\mu}-m}}\\
   &\le \frac{(k-1)!^{s-1}}{k^L} 
      \Bigl(1+ \frac{i+|\bnu|-1}{k-i-|\bnu|+1}
         \Bigr)^{m_{\bnu,\mu}-m}\le \frac{(k-1)!^{s-1}}{k^L} 
       ( i+|\bnu|)^{m_{\bnu,\mu}-m}.
\end{align*}
This proves (\ref{e410}). In order to prove (\ref{e411}), we note that, if $k_j \geq 1$ 
($j=1,\ldots, |\bnu|$) and $k_1+\cdots+k_{|\bnu|}=k-i$ 
hold, then we have $k_j \leq (k_1 \cdots k_{|\bnu|})$ for 
$j=1,\ldots,|\bnu|$ and so
$k-i=k_1+\cdots+k_{|\bnu|} 
     \leq |\bnu| (k_1 \cdots k_{|\bnu|})$ which yields 
$k \leq (i+|\bnu|)(k_1 \cdots k_{|\bnu|})$. Therefore, by the same argument we have
$$
    \prod_{(j,\alpha) \in I_m} 
        \prod_{h=1}^{\nu_{j,\alpha}} 
        \frac{1}{(k_{j,\alpha}(h) )^{L-m_{\bnu,\mu}}}
         \leq \Bigl( \frac{i+|\bnu|}{k} \Bigr)^{L-m_{\bnu,\mu}}.
$$
Hence, by the condition $i+|k(\bnu)|=k$ and (\ref{e410}) we have
\begin{align*}
   &\frac{1}{k^m} \prod_{(j,\alpha) \in I_m} 
        \prod_{h=1}^{\nu_{j,\alpha}} 
        \frac{(k_{j,\alpha}(h)-1)!^{s-1}}
               {(k_{j,\alpha}(h) )^{L-m_{\bnu,\mu}}} \leq \frac{(|k(\bnu)|-|\bnu|)!^{s-1}}{k^m} 
          \prod_{(j,\alpha) \in I_m} 
        \prod_{h=1}^{\nu_{j,\alpha}} 
        \frac{1}{(k_{j,\alpha}(h) )^{L-m_{\bnu,\mu}}} \\
   &\qquad \leq \frac{(k-i-|\bnu|)!^{s-1}}{k^m} \times 
          \frac{(i+|\bnu|)^{L-m_{\bnu,\mu}}}{k^{L-m_{\bnu,\mu}}} \leq \frac{(k-1)!^{s-1}}{k^L} 
       (i+|\bnu|)^{[m_{\bnu,\mu}-m]_+} 
                   \times (i+|\bnu|)^{L-m_{\bnu,\mu}}
\end{align*}
which proves (\ref{e411}). 
\end{proof}

By applying (\ref{e411}) to \ref{e48} we have
\begin{equation}\label{e412}
   {\mathcal B}^{(m)}_{\sigma_0}[|u_k|]
   \ll \frac{(k-1)!^{s-1}}{k^L} A(x) \sum_{\mu=0}^m x^{\mu}
           \sum_{2 \leq i+|\bnu| \leq k}
            {\mathcal A}_{i,\bnu,\mu}(i+|\bnu|)^L
         \biggl[\sum_{i+|k(\bnu)|=k}
        \prod_{(j,\alpha) \in I_m} 
        \prod_{h=1}^{\nu_{j,\alpha}}HY_{k_{j,\alpha}(h)} \biggr].
\end{equation}
By the definition of ${\mathcal A}_{i,\bnu,\mu}$ 
($0 \leq \mu \leq m$) we have $\sum_{\mu=0}^m x^{\mu}{\mathcal A}_{i,\bnu,\mu} = {\mathcal B}^{(m)}_{\sigma_0}[|a_{i,\bnu}|](x)$, and by (\ref{e44}) we have
$$
    A(x)\sum_{\mu=0}^m x^{\mu}{\mathcal A}_{i,\bnu,\mu}
    = A(x){\mathcal B}^{(m)}_{\sigma_0}[|a_{i,\bnu}|](x)
    \ll \frac{A_{i,\bnu}}{R-x}.
$$
By applying this to (\ref{e412}), and by (\ref{e47}) we derive
%$${\mathcal B}^{(m)}_{\sigma_0}[|u_k|]
%   \ll \frac{(k-1)!^{s-1}}{k^L}
%           \sum_{2 \leq i+|\bnu| \leq k}
%           \frac{A_{i,\bnu}(i+|\bnu|)^L}{R-x}
%         \biggl[ \sum_{i+|k(\bnu)|=k}
%        \prod_{(j,\alpha) \in I_m} 
%        \prod_{h=1}^{\nu_{j,\alpha}}  
%                      HY_{k_{j,\alpha}(h)} \biggr]
%$$
%and comparing this with (\ref{e47}) we have
\begin{equation}\label{e413}
    {\mathcal B}^{(m)}_{\sigma_0}[|u_k|]\ll \frac{(k-1)!^{s-1}}{k^L} (R-x)^{m\sigma_0}Y_k =\frac{(k-1)!^{s-1}}{k^L}\frac{C_k}{(R-x)^{m\sigma_0(3k-3)}}.
\end{equation}

If $\alpha \leq \mu$, by Lemma~\ref{lema32}, (\ref{e413}) and Corollary~\ref{coro34} we have
\begin{align}
    {\mathcal B}^{(m-\mu)}_{\sigma_0}
                      [k^j\partial_x^{\alpha}|u_k|]
    &\ll k^j{\mathcal B}^{(m-\alpha)}_{\sigma_0}
                         [\partial_x^{\alpha}|u_k|] \ll \frac{(k-1)!^{s-1}}{k^{L-j}} 
     \frac{\prod_{i=0}^{\alpha-1}(m\sigma_0(3k-3)+i) \times C_k}
              {(R-x)^{m\sigma_0(3k-3)+\alpha}} \label{e414}\\
    &\ll \frac{(k-1)!^{s-1}}{k^{L-j-\alpha}} 
     \frac{(3m\sigma_0)^{\alpha}  C_k}
              {(R-x)^{m\sigma_0(3k-2)}} \ll \frac{(k-1)!^{s-1}}{k^{L-j-\alpha}}HY_k(x)\nonumber.
\end{align}
If $\mu<\alpha$, the application of (\ref{e413}) and 
Corollary~\ref{coro34} yield
$$ {\mathcal B}^{(m-\mu)}_{\sigma_0}
                      [k^j\partial_x^{\alpha}|u_k|]
    = k^j{\mathcal B}^{(m-\mu)}_{\sigma_0}
               [\partial_x^{(\alpha-\mu)+\mu}|u_k|] \ll \frac{(k-1)!^{s-1}}{k^{L-j}} 
     \frac{A(\mu,\alpha)  C_k}
        {(R-x)^{m\sigma_0(3k-3)+\mu+\sigma_0(\alpha-\mu)}}$$
where
$$
     A(\mu,\alpha)= \prod_{i=0}^{\mu-1}(m\sigma_0(3k-3)+i) 
         \prod_{h=1}^{\alpha-\mu} 
        \bigl[(m\sigma_0(3k-3)+\mu+h \sigma_0)^{\sigma_0}
          e^{\sigma_0} \bigr].
$$
Since $A(\mu,\alpha) 
       \leq k^{\mu+\sigma_0(\alpha-\mu)}(3m\sigma_0)^{\sigma_0\alpha}
            e^{\sigma_0(\alpha-\mu)}$, we have
\begin{align}
    {\mathcal B}^{(m-\mu)}_{\sigma_0}
                      [k^j\partial_x^{\alpha}|u_k|]
    &\ll \frac{(k-1)!^{s-1}}{k^{L-j-(\mu+\sigma_0(\alpha-\mu))}} 
     \frac{(3m\sigma_0)^{\sigma_0\alpha}e^{\sigma_0(\alpha-\mu)} C_k}
        {(R-x)^{m\sigma_0(3k-3)+\mu+\sigma_0(\alpha-\mu)}}
         \label{e415} \\
    &\ll \frac{(k-1)!^{s-1}}{k^{L-j-(\mu+\sigma_0(\alpha-\mu))}} 
     \frac{H  C_k}
        {(R-x)^{m\sigma_0(3k-3)+m \sigma_0}} = \frac{(k-1)!^{s-1}}{k^{L-j-(\mu+\sigma_0(\alpha-\mu))}}
             HY_{k}.\nonumber
\end{align}
By (\ref{e414}) and (\ref{e415}) we have \ref{e48}. This proves Lemma~\ref{lema41}. 

\end{proof}

\subsection{Completion of the proof of (ii) of Theorem~\ref{maintheorem}}\label{sec43}

   By Lemma~\ref{lema41} we have
\begin{align*}
   \sum_{k \geq 1}
      \frac{{\mathcal B}_{\sigma_0}[|u_k|](x)}{(k-1)!^{s-1}} t^k
   \ll \sum_{k \geq 1}
    \frac{{\mathcal B}^{(m)}_{\sigma_0}
                   [|u_k|](x)}{(k-1)!^{s-1}} t^k
   \ll \sum_{k \geq 1}\frac{1}{k^L} HY_k(x) t^k.
\end{align*}
Take any $r \in (0,R)$.  We know there
is $\delta>0$ such that 
$\sum_{k \geq 1}Y_k(r)t^k$ is convergent for $|t| \leq \delta$.
Then, for $|t| \leq \delta$ we have
\begin{align*}
   \sum_{k \geq 1} {\mathcal B}_{\sigma_0}
           [|u_k|](r)\frac{|t|^k}{(k-1)!^{s-1}}
     \leq H \sum_{k \geq 1} Y_k(r) \delta^k <\infty.
\end{align*}
This proves that $u(t,x) \in G\{t,x \}_{(s,\sigma_0)}$ holds,
and we have (ii) of Theorem~\ref{maintheorem}.  \qed

\section{Proof of Theorem~\ref{teo27}}\label{secproofmaintheo}

    Suppose the conditions A${}_1$)$\sim$A${}_3$) and
c${}_1$)$\sim$c${}_4$) hold. Since $L(\lambda,\rho)$ is defined by
$$
     L(\lambda,\rho)=\lambda^m + \sum_{(j,\alpha) \in 
                (\Lambda_0 \setminus \{(m,0)\})}
                (-c_{j,\alpha}(0))\lambda^j [\rho]_{\alpha}
$$
(where $[\rho]_0=1$ and 
$[\rho]_{\alpha}=\rho(\rho-1) \cdots (\rho-\alpha+1)$ for
$\alpha \geq 1$) and since $-c_{j,\alpha}(0)>0$ holds for any 
$(j,\alpha) \in (\Lambda_0 \setminus \{(m,0)\})$, we have
$L(k,l) \geq k^m \geq 1$ for any $(k,l) \in \BN^* \times \BN$.
This means that the condition (N) is satisfied which entails that
the equation (\ref{e11}) has a unique formal solution 
$u(t,x) \in \BC[[t,x]]$ satisfying $u(0,x) \equiv 0$.
\par
   Since $(m_i,n_i) \in \Lambda_0$ for $i=1,\ldots,p$ and since
the coefficients of $\lambda^{m_i}[\rho]_{n_i}$ 
($i=1,\ldots,p$) in $L(\lambda, \rho)$ are all positive, we have 
$L(k,l) \geq c_0 \phi(k,l)$ on 
$\{(k,l) \in \BN^* \times \BN \,;\, l \geq m \}$ for some $c_0>0$. 
Since $m^{n_i} \geq l^{n_i}$ for $0 \leq l \leq m-1$, by setting $\delta_i=1/m^{n_i}$ we have 
$L(k,l) \geq k^m \geq \delta_ik^{m_i}l^{n_i}$ on 
$\{(k,l) \in \BN^* \times \BN \,;\, l<m \}$.
Hence, we have $L(k,l) \geq c_1 \phi(k,l)$ on 
$\{(k,l) \in \BN^* \times \BN \,;\, l<m \}$ for some $c_1>0$.
Thus, by Lemma~\ref{lema22} we see that the generalized Poincar\'e condition 
(GP) is satisfied. Hence, by Theorem~\ref{maintheorem} we have 
$u(t,x) \in G\{t,x \}_{(s,\sigma)}$ provided that
$(s,\sigma)$ satisfies $s \geq s_0$ and $\sigma \geq \sigma_0$.

\subsection{Proof of the converse statement}

Suppose that 
$u(t,x) \in G\{t,x \}_{(s,\sigma)}$ holds for some $s \geq 1$ and
$\sigma \geq 1$. Since $s_0$ is expressed as (\ref{e317}) and since
$$j+\max\{\alpha, \mu+\sigma_0(\alpha-\mu) \}-m
    = \left\{ \begin{array}{ll}
      j+\alpha+(\sigma_0-1)(\alpha-\mu)-m, 
                         &\mbox{if $\alpha>\mu$}, \\
      j+\alpha-m \leq 0, &\mbox{if $\alpha \leq \mu$},
    \end{array}
               \right.
$$
in order to show the conditions $s \geq s_0$ and 
$\sigma \geq \sigma_0$, it is enough to prove that the two 
conditions
\begin{equation}\label{e51}
\sigma \geq \frac{p_{h,\beta}+d_{h,\beta}}{p_{h,\beta}},\quad 
        s-1 \geq \frac{j+\alpha+(d_{h,\beta}/p_{h,\beta})
               (\alpha-\mu)-m}{i+|\bnu|-1} 
\end{equation}
hold for any $(h,\beta) \in \Lambda_{out}$, 
$\mu \in \{0,1,\ldots,m-1 \}$, $(i,\bnu) \in J_{\mu}$ and 
$(j,\alpha) \in K_{\bnu}$ satisfying $\alpha>\mu$. Let us show
this now. Take any $(h,\beta) \in \Lambda_{out}$, 
$\mu \in \{0,1,\ldots,m-1 \}$, $(i,\bnu) \in J_{\mu}$ and 
$(j,\alpha) \in K_{\bnu}$ with $\alpha >\mu$. Note that equation (\ref{e11}) is written as
$$    L(t \partial_t, x \partial_x)u 
    = a(x)t + \sum_{(j,\alpha) \in \Lambda}
    x^{p_{j,\alpha}}b_{j,\alpha}(x)
           (t \partial_t)^j[x \partial_x]_{\alpha}u +\sum_{i+|\bnu| \geq 2} a_{i,\bnu}(x)t^i 
          \prod_{(j,\alpha) \in I_m}
                 \bigl[(t \partial_t)^j 
      \partial_x^{\alpha}u \bigr]^{\nu_{j,\alpha}},
$$
and that its formal solution 
$u(t,x)= \sum_{k \geq 1}u_k(x)t^k \in \BC[[t,x]]$ satisfies that
$u(t,x) \gg 0$ and $L(1,x\partial_x)u_1(x)=a(x)$.
Since $\partial_x^lu(t,x) \gg (\partial_x^lu_1)(0)t  = (\partial_x^la)(0)t/L(1,l)$ for any $l \in \BN$, we have
\begin{align*}
   L(t \partial_t, x \partial_x)u
   &\gg \frac{(\partial_x^ma)(0)}{m!}x^mt 
          + b_{h,\beta}(0)x^{p_{h,\beta}}
          (t \partial_t)^h[x \partial_x]_{\beta}u  \\
   &+ \frac{(\partial_x^{\mu}a_{i,\bnu})(0)}
                  {\mu !} x^{\mu}t^{i+|\bnu|-1}
      \prod_{(k,\gamma) \ne (j,\alpha)}
           \Bigl(\frac{(\partial^{\gamma}a)(0)}
           {L(1,\gamma)} \Bigr)^{\nu_{k,\gamma}} \times  \Bigl(\frac{(\partial^{\alpha}a)(0)}
                         {L(1,\alpha)} \Bigr)^{\nu_{j,\alpha}-1}
          \times (t \partial_t)^j\partial_x^{\alpha}u.
\end{align*}
Thus, by setting
$$A= \frac{(\partial_x^ma)(0)}{m!}, \quad
   B=b_{h,\beta}(0), \quad
   C=\frac{(\partial_x^{\mu}a_{i,\bnu})(0)}{\mu !}
      \prod_{(k,\gamma) \ne (j,\alpha)}
           \Bigl(\frac{(\partial^{\gamma}a)(0)}
           {L(1,\gamma)} \Bigr)^{\nu_{k,\gamma}} 
          \times \Bigl(\frac{(\partial^{\alpha}a)(0)}
                         {L(1,\alpha)} \Bigr)^{\nu_{j,\alpha}-1},
$$
we have $A>0$, $B>0$, $C>0$ and
\begin{equation}\label{e53}
   L(t \partial_t, x \partial_x)u \gg A x^mt  + Bx^{p_{h,\beta}}
          (t \partial_t)^h[x \partial_x]_{\beta}u  + C x^{\mu}t^{i+|\bnu|-1}(t \partial_t)^j\partial_x^{\alpha}u.
\end{equation}

Now, let us consider the equation
\begin{equation}\label{e54}
   L(t \partial_t, x \partial_x)w=Ax^mt + Bx^{p_{h,\beta}}
          (t \partial_t)^h [x \partial_x]_{\beta}w  +C x^{\mu}t^{i+|\bnu|-1}
                (t \partial_t)^j\partial_x^{\alpha}w.
\end{equation}

\begin{lemma}\label{lema51} Under the above situation, the equation (\ref{e54})
has a unique formal solution $w(t,x) \in \BC[[t,x]]$ satisfying
$w(0,x) \equiv 0$, and it belongs to the class
$G\{t,x \}_{(s',\sigma')}$ if and only if $(s',\sigma')$ 
satisfies 
$$\sigma' \geq \frac{p_{h,\beta}+d_{h,\beta}}{p_{h,\beta}}, \quad s'-1 \geq \frac{j+\alpha+(d_{h,\beta}/p_{h,\beta})(\alpha-\mu)-m}{i+|\bnu|-1}.$$
\end{lemma}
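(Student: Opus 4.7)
The plan is to analyze equation (\ref{e54}) directly through its power series recursion, and to establish both the sufficiency and necessity of the claimed Gevrey bounds via this explicit form. Writing $w(t,x) = \sum_{k\ge1} w_k(x) t^k$, setting $\ell = i + |\bnu| - 1 \ge 1$, and comparing coefficients of $t^k$, the equation decomposes into
$$
L(k, x\partial_x) w_k - B k^h x^{p_{h,\beta}}[x\partial_x]_\beta w_k = A \delta_{k,1} x^m + C(k-\ell)^j x^\mu \partial_x^\alpha w_{k-\ell},
$$
with $w_{k-\ell} \equiv 0$ for $k \le \ell$. For this equation $L(\lambda,\rho) = \lambda^m$, so $L(k,l) = k^m \ge 1$, and each $w_k(x) \in \BC[[x]]$ is uniquely determined. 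An induction shows $w_k \equiv 0$ unless $k = 1 + q\ell$; writing $v_q = w_{1+q\ell}$ gives $v_0 - B x^{p_{h,\beta}}[x\partial_x]_\beta v_0 = A x^m$ and, for $q \ge 1$,
$$
(1+q\ell)^m v_q - B(1+q\ell)^h x^{p_{h,\beta}}[x\partial_x]_\beta v_q = C(1 + (q-1)\ell)^j x^\mu \partial_x^\alpha v_{q-1}.
$$
In particular $v_0$ admits the explicit form $v_0(x) = A \sum_{q \ge 0} B^q \bigl(\prod_{r=0}^{q-1}[m + r p_{h,\beta}]_\beta\bigr) x^{m + q p_{h,\beta}}$.

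For the \emph{``if''} direction, equation (\ref{e54}) is itself of the form (\ref{e11}) and satisfies A${}_1$)--A${}_3$). Since $L(\lambda,\rho) = \lambda^m$, condition (N) is trivial; the Newton polygon of (\ref{e54}) is the single region $C(m,0)$, it consists of a single segment, and (GP) reduces to (N) by the remark in Section~\ref{sec22}. The only element of $\Lambda_1$ for (\ref{e54}) is $(h,\beta)$, so the irregularity equals $(p_{h,\beta}+d_{h,\beta})/p_{h,\beta}$ with $d_{h,\beta}$ read off from the Newton polygon of (\ref{e54}); a direct computation of $L_{\mu',j',\alpha'}$ from $R_2(t,x,\bz) = C x^\mu t^\ell z_{j,\alpha}$ yields precisely the $s_0$ of (\ref{e25}) claimed in the lemma. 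Applying Theorem~\ref{maintheorem}~(ii) to (\ref{e54}) then gives $w \in G\{t,x\}_{(s',\sigma')}$ under the stated inequalities.

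For the \emph{``only if''} direction, I would exhibit explicit lower bounds on $w_{k,l}$ along subsequences tailored to each of the two inequalities. From the closed form of $v_0$, the coefficient at $l = m + q p_{h,\beta}$ grows like $c^q (q!)^{d_{h,\beta}}$; applying the Hadamard criterion to $w$ along $k=1$, $l \to \infty$, and using Stirling forces $\sigma' - 1 \ge d_{h,\beta}/p_{h,\beta}$. For the $s'$ inequality, I would iterate the $v_q$-recursion: since $A,B,C > 0$ and both $[x\partial_x]_\beta$ and $x^\mu\partial_x^\alpha$ preserve non-negativity of Taylor coefficients, every partial contribution provides a genuine lower bound for $w_{k,l}$ without cancellation. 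Selecting, at each step, the path that applies $x^\mu\partial_x^\alpha$ once (possibly after a prescribed number of $[x\partial_x]_\beta$-iterations to land at a chosen power of $x$), I would track the product of the prefactors $C(1+(r-1)\ell)^j(1+r\ell)^{-m}$ and of the resulting $[\,\cdot\,]_\alpha$- and $[\,\cdot\,]_\beta$-factorials across $q$ iterations. Choosing a sequence $l_n = l(q_n)$ linear in $q_n$ and applying Stirling to the resulting product yields the second inequality. The main obstacle is this bookkeeping: the two factorial exponents governing $s'$ and $\sigma'$ have to be extracted simultaneously along one sequence $(k_n,l_n)$, and the correct interplay of the shift $\partial_x^\alpha$ (which lowers the $x$-power by $\alpha-\mu$ per step and contributes the $(\alpha-\mu)$-factor in the lemma's $s'$-bound) with the $[x\partial_x]_\beta$-factor (which is responsible for the $(q!)^{d_{h,\beta}}$-growth in $x$) must be maintained all the way through the iteration.
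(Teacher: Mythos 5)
There is a genuine gap, and it starts with a misreading of equation (\ref{e54}): its left-hand side is $L(t\partial_t,x\partial_x)$ with $L(\lambda,\rho)$ the full symbol (\ref{e14}) of the original equation at $x=0$, not $(t\partial_t)^m$. Hypothesis $c_2)$ only gives the inequality $L(k,l)\geq k^m$, not the identity $L(\lambda,\rho)=\lambda^m$. Consequently the set $\Lambda_0$ attached to (\ref{e54}) is that of the original equation, its Newton polygon is ${\mathcal N}_0$ itself, not $C(m,0)$, and the quantity $d_{h,\beta}$ in the lemma is the distance of $(h,\beta)$ to ${\mathcal N}_0$, which is in general strictly smaller than $\beta$ (it equals $\beta-n_i-s_i(m_i-h)$ for the edge $\Gamma_i$ with $m_{i+1}\leq h<m_i$). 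With your simplified polygon, the ``if'' direction would only deliver membership for $\sigma'\geq 1+\beta/p_{h,\beta}$, which does not cover the claimed range $\sigma'\geq 1+d_{h,\beta}/p_{h,\beta}$; and your ``only if'' claim that the $v_0$-coefficients grow like $(q!)^{d_{h,\beta}}$ is not what your own recursion produces (with $L=\lambda^m$ it gives $(q!)^{\beta}$, proving too much and contradicting the sufficiency part whenever $d_{h,\beta}<\beta$). Even with the correct $L$, the row $k=1$ is governed by $L(1,l)\asymp l^{n_p}$, so it only yields the exponent $\beta-n_p$, which falls short of $d_{h,\beta}$ whenever $h>m_p$; the sharp exponent cannot be read off from any single fixed $k$.

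This is precisely where the paper's proof of Proposition~\ref{prop56} invests its effort, and it is the idea missing from your plan: the lower bounds are extracted along the anisotropic subsequence $k_l=[((lp+m+p)^{s_i}-1)/q]$, i.e.\ $k\approx l^{s_i}$ with $-s_i$ the slope of the Newton-polygon edge lying under $(h,\beta)$. Along that subsequence one has the two-sided control $c_0\phi\leq L\leq c_1\phi$ together with $\phi(k,l)\leq c_3k^{m_i}l^{n_i}$ (Lemma~\ref{lema52}, via Lemmas~\ref{lema53}--\ref{lema54}), so the minorant coefficients $A_{k,lp+d_k}$ of Step~3 grow like $l!^{\,s_ih+\beta-s_im_i-n_i}=l!^{\,d_{h,\beta}}$ (Lemma~\ref{lema59}), which gives $\sigma\geq\sigma_0^*$ in Step~5. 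For the $s'$-inequality the paper does not attempt the path-by-path bookkeeping you describe (and correctly flag as the main obstacle): it first builds the minorant $w(t,x)$, then iterates the crude estimate $u_{kq+1,l}\geq C((k-1)q+1)^j(l-\mu+1)^{\alpha}u_{(k-1)q+1,(\alpha-\mu)+l}/L(kq+1,l)$ exactly $lp$ times starting from $A_{k,lp(\alpha-\mu)+d_k}$, again evaluated at $k=k_l\approx l^{s_i}$, obtaining (\ref{e521}) and hence $s\geq s_0^*$ in Step~8. Your proposal, as written, neither identifies the correct symbol $L$ (hence the correct polygon and the correct $d_{h,\beta}$) nor supplies the subsequence device that both halves of the necessity argument hinge on, so the two inequalities of the lemma are not actually reached.
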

The proof of this lemma will be given in Section~\ref{sec53}.
\par
   By (\ref{e53}) and (\ref{e54}), it holds that $u(t,x) \gg w(t,x)$. Since $u(t,x) \in G\{t,x \}_{(s,\sigma)}$ is assumed, we have
$w(t,x) \in G\{t,x\}_{(s,\sigma)}$, and so by Lemma~\ref{lema51} we have the conditions (\ref{e51}).

Thus, to complete the proof of Theorem~\ref{teo27} it is enough 
to show Lemma~\ref{lema51} above.

\subsection{Some lemmas}

Before the proof of Lemma~\ref{lema51}, let us give some lemmas which
are needed in that proof. We note that by the assumption
c${}_2$) we have $L(k,l) \geq k^m \geq 1$ for any 
$(k,l) \in \BN^* \times \BN$.

\begin{lemma}\label{lema52}
The following statements hold:
\begin{itemize}
\item[(i)] There is a constant $c_1>0$ such that $L(k,l) \leq c_1 \phi(k,l)$ for every $(k,l) \in \BN^* \times \BN$.
\item[(ii)] Let $a>0$ and $q \in \BN^*$. Then, there is $c_2>0$ 
with $L(kq+1,a) \leq c_2(k+1)^m$ for all $k \in \BN^*$.
\item[(iii)] Let $1 \leq i \leq p$, and let $-s_i$ be the slope of $\Gamma_i$. Then, there is a constant $c_3>0$ such that $\phi(k,l) \leq c_3 k^{m_i}l^{n_i}$ for every $l \in \BN^*$ and $k=[\,l^{s_i}]$.
\end{itemize}
\end{lemma}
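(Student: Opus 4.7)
All three estimates start from the expansion
\[
L(\lambda,\rho)=\lambda^{m}+\sum_{(j,\alpha)\in\Lambda_{0}\setminus\{(m,0)\}}(-c_{j,\alpha}(0))\,\lambda^{j}\,[\rho]_{\alpha},
\]
whose coefficients are nonnegative by condition $c_{2})$, together with the elementary bound $[l]_{\alpha}\le l^{\alpha}$ for $l\in\mathbb{N}$.

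For (i), every $(j,\alpha)$ with $c_{j,\alpha}(0)\ne 0$ belongs to $\Lambda_{0}\subset\mathcal{N}_{0}$, so Proposition~\ref{prop35}(i) gives $k^{j}l^{\alpha}\le\phi(k,l)$. The leading monomial $k^{m}=k^{m_{1}}l^{n_{1}}$ is itself a monomial of $\phi$. Summing these bounds over the finitely many indices in $\Lambda_{0}\setminus\{(m,0)\}$ produces the constant $c_{1}$.

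For (ii), I fix $a>0$ and view $L(\lambda,a)$ as a polynomial in $\lambda$ of degree $m$ with $\lambda$-independent real coefficients, so that $L(kq+1,a)$ is a polynomial in $k$ of degree $m$. Using $(kq+1)^{j}\le q^{m}(k+1)^{m}$ for every $0\le j\le m$ and summing the finitely many terms yields $L(kq+1,a)\le c_{2}(k+1)^{m}$. (Note $L(kq+1,a)\ge 1$, so there is no sign issue.)

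For (iii), the crucial input is the convex geometry of $\mathcal{N}_{0}$. With $k=[\,l^{s_{i}}]$, one has $k\le l^{s_{i}}$ and $k\ge l^{s_{i}}/2$ once $l$ is large enough, so $k^{m_{i'}}l^{n_{i'}}\asymp l^{s_{i}m_{i'}+n_{i'}}$ for every vertex index $i'\in\{1,\ldots,p\}$. Since $(m_{i},n_{i})$ and $(m_{i+1},n_{i+1})$ both lie on the segment $\Gamma_{i}$ of slope $-s_{i}$, one has $s_{i}m_{i}+n_{i}=s_{i}m_{i+1}+n_{i+1}$, and it therefore suffices to verify that the linear functional $(x,y)\mapsto s_{i}x+y$ attains its maximum over the vertices $(m_{1},n_{1}),\ldots,(m_{p},n_{p})$ on the edge $\Gamma_{i}$. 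This is precisely the statement that the affine line carrying $\Gamma_{i}$ is a supporting line of $\mathcal{N}_{0}$, which follows from the strict monotonicity $s_{0}>s_{1}>\cdots>s_{p}$: walking along the boundary away from $\Gamma_{i}$ to the left, the successive slopes $-s_{i-1},-s_{i-2},\ldots$ strictly exceed $-s_{i}$, which forces $s_{i}x+y$ to strictly decrease; symmetrically when walking to the right. Combining gives $\phi(k,l)\le c_{3}\,k^{m_{i}}l^{n_{i}}$.

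The only step that is not a one-line computation is (iii); inside it, the main care needed is to handle small values of $l$ (where $k=[\,l^{s_{i}}]$ may equal $1$), the extreme cases $i=1$ and $i=p$ (where only one adjacent edge is present, with $s_{p}=0$), and exponents $s_{i}m_{i'}+n_{i'}$ that may vanish. All of these issues are purely bookkeeping and are absorbed into the constant $c_{3}$ by the finiteness of the exceptional $l$-values and of $p$, so no new idea beyond Newton polygon convexity is required.
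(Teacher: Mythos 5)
Your proof is correct. Parts (i) and (ii) are exactly the paper's argument: for (i) you combine Proposition~\ref{prop35}(i) (every point of $\Lambda_0$ lies in ${\mathcal N}_0$) with $[l]_{\alpha}\le l^{\alpha}$, which is precisely how the paper obtains $c_1=1+\sum|c_{j,\alpha}(0)|$, and (ii) is the same degree-$m$ polynomial bound. For (iii) you take a genuinely more direct route than the paper: the paper handles $1\le i<p$ by invoking the two three-monomial comparison Lemmas~\ref{lema53} and~\ref{lema54} (pairwise/neighbouring-vertex comparisons, with the case $i=p$ treated separately), whereas you bound every monomial $k^{m_{i'}}l^{n_{i'}}$ by $l^{s_im_{i'}+n_{i'}}$ using $k\le l^{s_i}$, show that the exponent functional $(x,y)\mapsto s_ix+y$ is maximized over the vertices exactly on $\Gamma_i$ (the supporting-line property, which you correctly derive from the strict monotonicity $s_1>s_2>\cdots>s_p$), and convert back via $[l^{s_i}]\ge l^{s_i}/2$. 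Note this last inequality holds for every $l\ge 1$ because $l^{s_i}\ge 1$, so even the small-$l$ and $i=p$ caveats you flag are unnecessary: your argument covers all $l\in\BN^*$ and all $1\le i\le p$ uniformly. Both proofs ultimately rest on the substitution $k=[\,l^{s_i}]\asymp l^{s_i}$ and the convexity of ${\mathcal N}_0$; your global supporting-line formulation treats all $p$ vertices at once and dispenses with the auxiliary lemmas (whose chaining over all vertices the paper leaves implicit), while the paper's factorization isolates two elementary two-slope inequalities that it states and proves separately.
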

\begin{proof}
The first part is a consequence of (i) of Proposition~\ref{prop35}, for
$c_1= 1+ \sum_{(j,\alpha) 
                    \in (\Lambda_0 \setminus \{(m,0) \})}
              |c_{j,\alpha}|$.
							
The statement (ii) is a consequence of the fact that $L(\lambda,a)$ is a polynomial of degree $m$ in $\lambda$.

In the case $1 \leq i<p$, the statement (iii) follows from Lemmas~\ref{lema53} and~\ref{lema54} given below. In the case $i=p$, then $s_p=0$ and $k=1$, so
$\phi(k,l)=\phi(1,l) \leq c_3 l^{n_p}=c_3 k^{m_p}l^{n_p}$ for some
$c_3>0$ (which is independent of $l$). 
\end{proof}

Both situations described in Lemma~\ref{lema53} and~\ref{lema54} are illustrated in Figure~\ref{fig:a34}.
\begin{figure}[ht]
	\centering
		\includegraphics[width=0.48\textwidth]{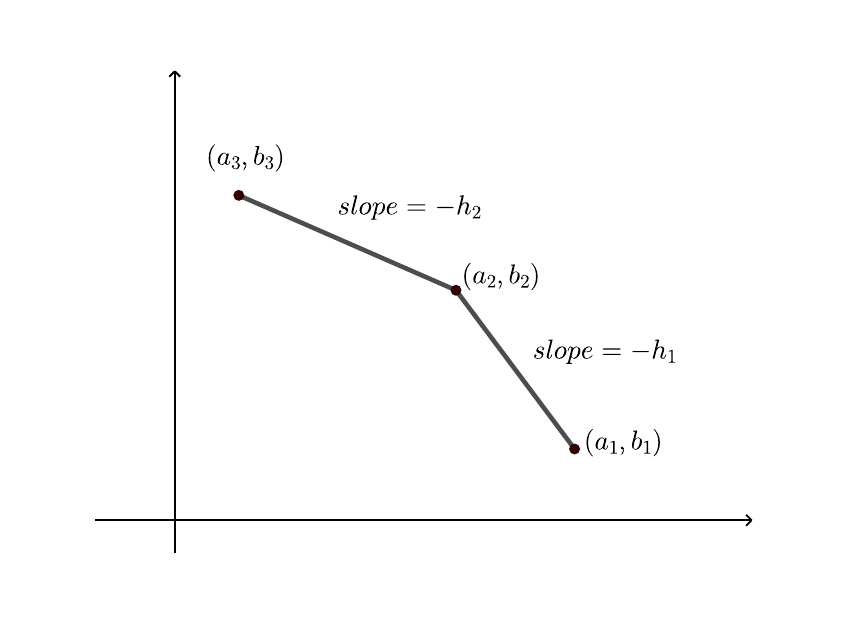}
		\includegraphics[width=0.48\textwidth]{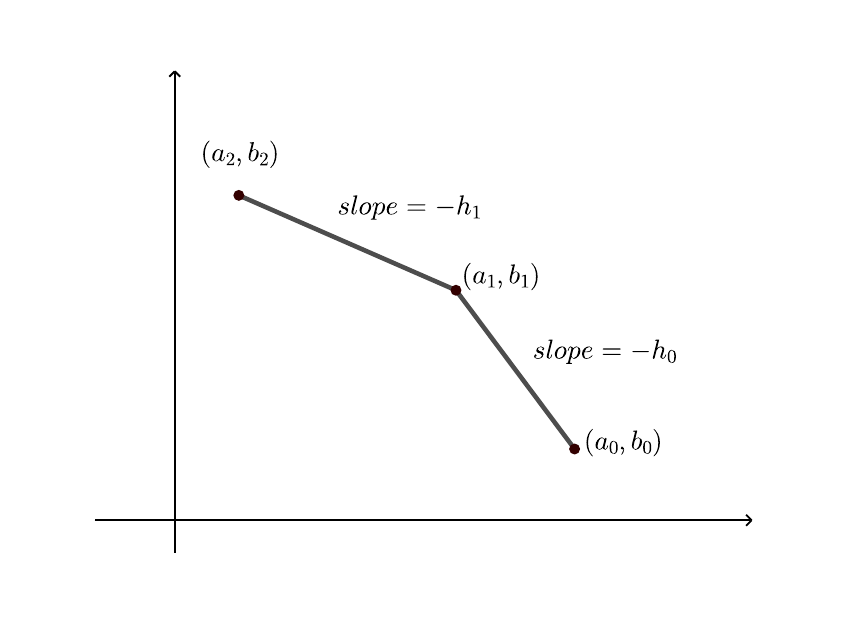} 
	\caption{Geometry in Lemma~\ref{lema53} (left) and Lemma~\ref{lema54} (right)}
			\label{fig:a34}
\end{figure}
\begin{lemma}\label{lema53} Let $0 \leq a_3<a_2<a_1$ and 
$0 \leq b_1<b_2<b_3$. Set $h_1=(b_2-b_1)/(a_1-a_2)$ and 
$h_2=(b_3-b_2)/(a_2-a_3)$.  If $h_1>h_2$ holds, there is 
$c>0$ such that
\begin{equation}\label{e55}
   k^{a_1}l^{b_1}+ k^{a_2}l^{b_2}+k^{a_3}l^{b_3}
     \leq c k^{a_1}l^{b_1}\quad 
    \mbox{for any $l \in \BN^*$ and $k=[l^{h_1}]$}.
\end{equation}
\end{lemma}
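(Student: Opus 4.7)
The plan is to exploit the two defining identities $h_1(a_1-a_2)=b_2-b_1$ and $h_2(a_2-a_3)=b_3-b_2$, which control the $l$-exponents of the three monomials once $k$ is tied to $l^{h_1}$. Substituting $k=l^{h_1}$ formally, the first and second monomials have the same $l$-exponent $a_1 h_1 + b_1 = a_2 h_1 + b_2$, while for the third I would compute
$$(a_1 h_1 + b_1)-(a_3 h_1 + b_3)=h_1(a_1-a_3)-(b_3-b_1)=(h_1-h_2)(a_2-a_3),$$
which is strictly positive by the assumption $h_1>h_2$. This is the heuristic reason $k^{a_1}l^{b_1}$ dominates the other two terms.

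To make this rigorous I would divide each term by $k^{a_1}l^{b_1}$ and bound the quotients
$$\frac{l^{b_2-b_1}}{k^{a_1-a_2}}\quad\text{and}\quad\frac{l^{b_3-b_1}}{k^{a_1-a_3}}.$$
The integer-part truncation gives $l^{h_1}-1\leq k\leq l^{h_1}$, so as soon as $l^{h_1}\geq 2$ one has $k\geq l^{h_1}/2$, which yields
$$\frac{l^{b_2-b_1}}{k^{a_1-a_2}}\leq 2^{a_1-a_2},\qquad \frac{l^{b_3-b_1}}{k^{a_1-a_3}}\leq \frac{2^{a_1-a_3}}{l^{(h_1-h_2)(a_2-a_3)}}\leq 2^{a_1-a_3},$$
using $l\geq 1$ and positivity of $(h_1-h_2)(a_2-a_3)$ for the last step. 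Since $h_1>0$, only finitely many values of $l\in\BN^*$ fail $l^{h_1}\geq 2$, and on this finite set the ratios are bounded by an absolute constant; all such corrections can be absorbed into the final $c$.

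I do not expect a real obstacle here: the proof is essentially a bookkeeping exercise in exponent arithmetic. The only subtlety is the integer-part correction, which is trivially controlled because $k\geq l^{h_1}/2$ for all but finitely many $l$. The same strategy, with $k=[l^{h_2}]$ and the roles of the segments shifted by one, will handle the companion Lemma~\ref{lema54} that is invoked together with Lemma~\ref{lema53} in the proof of part (iii) of Lemma~\ref{lema52}.
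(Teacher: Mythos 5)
Your proof is correct and follows essentially the same route as the paper: both arguments come down to the exponent identity $h_1a_1+b_1=h_1a_2+b_2$ together with the positive gap $(h_1-h_2)(a_2-a_3)$ (equivalently $(h_1-h_*)(a_1-a_3)$ with $h_*=(b_3-b_1)/(a_1-a_3)$, which is what the paper uses) for the third term, plus comparing $k=[l^{h_1}]$ with $l^{h_1}$ up to a bounded factor. If anything, your handling of the integer-part correction via $k\geq l^{h_1}/2$ is more explicit than the paper's, which leaves that final comparison implicit.
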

\begin{proof}
We set $h_*=(b_3-b_1)/(a_1-a_3)$. Then, $h_1>h_*$. If we fix $k=[l^{h_1}]$ we have
$$k^{a_1}l^{b_1}+ k^{a_2}l^{b_2}+k^{a_3}l^{b_3}\leq (l^{h_1})^{a_1}l^{b_1}+ (l^{h_1})^{a_2}l^{b_2}
               +(l^{h_1})^{a_3}l^{b_3} =(l^{h_1})^{a_1}l^{b_1} \Bigl( 1
          +1+ l^{-(h_1-h_*)(a_1-a_3)} \Bigr),$$
for every $l \in \BN^*$. Since $(h_1-h_*)(a_1-a_3)>0$, this leads us to (\ref{e55}).
\end{proof}
The proof of Lemma~\ref{lema54} is analogous to that of Lemma~\ref{lema53}, so we omit it.
\begin{lemma}\label{lema54}
Let $0 \leq a_2<a_1<a_0$ and $0 \leq b_0<b_1<b_2$. Set $h_0=(b_1-b_0)/(a_0-a_1)$ and 
$h_1=(b_2-b_1)/(a_1-a_2)$.  If $h_0>h_1$ holds, there is a constant $c>0$ such that
$$ k^{a_0}l^{b_0}+ k^{a_1}l^{b_1}+k^{a_2}l^{b_2} \leq c k^{a_1}l^{b_1}$$
for every $l \in \BN^*$ and $k=[l^{h_1}]$.
\end{lemma}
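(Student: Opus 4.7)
The plan is to mimic the proof of Lemma~\ref{lema53}, the geometry being the mirror image (see Figure~\ref{fig:a34}). The strict slope inequality $h_0>h_1$ forces the middle monomial $k^{a_1}l^{b_1}$ to dominate once we substitute $k=[l^{h_1}]$, while the definition of $h_1$ makes the third term $k^{a_2}l^{b_2}$ exactly balance with it.

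First I would use the bound $k\le l^{h_1}$ to pass from $k$ to $l^{h_1}$ in each summand on the left. The key exponent arithmetic is that $h_1 a_2+b_2=h_1 a_1+b_1$ by the definition of $h_1$, and
$$
   h_1 a_0+b_0-(h_1 a_1+b_1)=(h_1-h_0)(a_0-a_1)<0
$$
by the hypotheses $h_0>h_1$ and $a_0>a_1$. Factoring out $(l^{h_1})^{a_1}l^{b_1}$ would then yield
$$
   k^{a_0}l^{b_0}+k^{a_1}l^{b_1}+k^{a_2}l^{b_2}\le(l^{h_1})^{a_1}l^{b_1}\bigl(l^{-(h_0-h_1)(a_0-a_1)}+1+1\bigr)\le 3(l^{h_1})^{a_1}l^{b_1}
$$
for every $l\in\BN^*$, the last inequality using that the exponent $-(h_0-h_1)(a_0-a_1)$ is strictly negative.

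To finish, I would replace $(l^{h_1})^{a_1}$ by a constant multiple of $k^{a_1}$. Since $h_1=(b_2-b_1)/(a_1-a_2)>0$, for $l$ large enough that $l^{h_1}\ge 2$ one has $k=[l^{h_1}]\ge l^{h_1}/2$, whence $(l^{h_1})^{a_1}\le 2^{a_1}k^{a_1}$. The finitely many remaining values of $l$ contribute only a bounded amount to the left-hand side while $k^{a_1}l^{b_1}\ge 1$, so they are absorbed into the final constant $c$. The only mildly subtle point is this floor-function cleanup at small $l$; beyond that, as the paper itself notes, the argument is wholly parallel to Lemma~\ref{lema53} and I do not expect any real obstacle.
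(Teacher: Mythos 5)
Your proposal is correct and is essentially the paper's own argument: the paper omits the proof of Lemma~\ref{lema54} as analogous to Lemma~\ref{lema53}, and your substitution $k\le l^{h_1}$, the exponent identities $h_1a_2+b_2=h_1a_1+b_1$ and $(h_1-h_0)(a_0-a_1)<0$, and the factoring step are exactly that analogue. Your explicit handling of the floor-function step $[l^{h_1}]\ge l^{h_1}/2$ (which the paper leaves implicit even in Lemma~\ref{lema53}) is a welcome, correct addition.
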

%
%\par
%\smallskip
%   {\it Proof.} If we set $k=[l^{h_1}]$ we have
%\begin{align*}
%     &k^{a_0}l^{b_0}+ k^{a_1}l^{b_1}+k^{a_2}l^{b_2}\\
%     &\leq (l^{h_1})^{a_0}l^{b_0}+ (l^{h_1})^{a_1}l^{b_1}
%               +(l^{h_1})^{a_2}l^{b_2} \\
%     &= (l^{h_1})^{a_1}l^{b_1} \Bigl(l^{h_1(a_0-a_1)}l^{b_0-b_1}
%          +1+ l^{h_1(a_2-a_1)}l^{b_2-b_1}  \Bigr) \\
%     &=(l^{h_1})^{a_1}l^{b_1} \Bigl(l^{-(h_0-h_1)(a_0-a_1)}
%          + 1 +1 \Bigr)
%      \quad \mbox{for any $l \in \BN^*$}.
%\end{align*}
%Since $(h_0-h_1)(a_0-a_1)>0$, this leads us to (5.6).  \qed
%
%
%
%\par
%\vspace{5mm}
%%%   The following result is also very important.
%
%
%

\begin{lemma}\label{lema55}
The following statements hold:
\begin{itemize}
\item[(i)] For any $a>0$, $b>0$, $c>0$, $d>0$ 
and $0 \leq \delta<1$ we have
$\displaystyle\lim_{\BN^* \ni l \to \infty}
          \frac{l!^a}{[cl^{\delta}+d]!^b}=\infty$.
\item[(ii)] For $a>b \geq 0$, $c>0$ and $0 \leq \delta<1$ we have
$\displaystyle \lim_{\BN^* \ni l \to \infty}
          \frac{l!^{a}}{[cl^{\delta}+ l]!^b}
     = \infty$.
\end{itemize}
\end{lemma}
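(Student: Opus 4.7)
The plan is to reduce both parts to Stirling's estimate $\log n! = n\log n - n + O(\log n)$ and then compare leading terms in the logarithm of the ratio. In both cases, the numerator contributes $\log(l!^a) = al\log l + O(l)$, so the question reduces to showing that the denominator's logarithm grows strictly more slowly.

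For part (i), set $M_l = [cl^{\delta}+d]$. Since $0\leq \delta<1$, one has $M_l = O(l^{\delta})$ and $\log M_l = \delta\log l + O(1)$. Stirling therefore gives
\[
\log(M_l!^b) = b(M_l\log M_l - M_l) + O(\log M_l) = O(l^{\delta}\log l).
\]
Because $l^{\delta}\log l = o(l\log l)$ when $\delta<1$, one concludes
\[
\log\frac{l!^a}{M_l!^b} = al\log l\,(1+o(1)) \longrightarrow +\infty.
\]

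For part (ii), set $N_l = [cl^{\delta}+l]$. Since $\delta<1$, $\log N_l = \log l + \log(1+cl^{\delta-1}) + O(1/l) = \log l + O(l^{\delta-1})$, and a direct expansion yields
\[
N_l\log N_l = \bigl(l+cl^{\delta}+O(1)\bigr)\bigl(\log l + O(l^{\delta-1})\bigr) = l\log l + cl^{\delta}\log l + O(l^{\delta}).
\]
Inserting into Stirling's formula gives
\[
\log\frac{l!^a}{N_l!^b} = (a-b)l\log l - bcl^{\delta}\log l + O(l).
\]
Because $a>b$ strictly and $\delta<1$, the first term dominates both the $l^{\delta}\log l$ correction and the linear remainder, so the ratio tends to $+\infty$.

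The only real obstacle is bookkeeping the lower-order contributions: one must check that replacing $cl^{\delta}+d$ or $cl^{\delta}+l$ by its integer part shifts the factorial's argument by at most $1$, which perturbs $\log n!$ by only $O(\log n)$ and is absorbed in the error terms above. No tool beyond Stirling's formula is required, and the strictness of $a>b$ in part (ii) is essential precisely because otherwise the leading $l\log l$ contributions would cancel and the claim would fail.
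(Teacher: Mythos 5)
Your argument is correct and follows essentially the same route as the paper: bound the integer part, take logarithms, and compare leading terms via Stirling's formula (the paper phrases this through the Gamma function but the content is identical, and you supply more of the bookkeeping than the paper does). Nothing further is needed.
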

\begin{proof}
Since 
$[cl^{\delta}+d] \leq cl^{\delta}+d \leq (c+d)l^{\delta}$ holds, 
to show (i) it is enough to prove
$$ \lim_{x \to \infty} 
 \frac{\Gamma(x+1)^a}{\Gamma(c_1x^{\delta}+1)^b}=\infty, \quad
    \mbox{that is,} \quad 
     \lim_{x \to \infty} \log \Bigl(
      \frac{\Gamma(x+1)^a}{\Gamma(c_1x^{\delta}+1)^b} \Bigr)
         = \infty,  
$$
where $c_1=c+d$. This is a direct consequence of Stirling's formula. The second part of the proof is attained by analogous arguments. 

% Let us show this now. By Stirling's formula we 
%have
%\begin{align*}
%   \log \Bigl(
%      \frac{\Gamma(x+1)^a}{\Gamma(c_1x^{\delta}+1)^b} \Bigr) 
%   &\geq \log \Bigl(
%      \frac{(\sqrt{2\pi} x^{x+1/2} e^{-x})^a}
%           {(\sqrt{2\pi} (c_1x^{\delta})^{c_1x^{\delta}+1/2}
%            e^{-c_1x^{\delta}+1})^b} \Bigr) \\
%    &=O(1) + (x \log x) \Bigl(a+ \frac{a}{2x}-\frac{a}{\log x} 
%      - \frac{bc_1\log c_1}{x^{1-\delta}\log x} \\
%    &\qquad \qquad \qquad \qquad \qquad 
%       - \frac{bc_1 \delta}{x^{1-\delta}} - \frac{b \delta}{2x}
%            + \frac{bc_1}{x^{1-\delta}\log x} \Bigr).
%\end{align*}
%This proves (5.7). The part (2) can be proved in the same way.
\end{proof}

\subsection{Proof of Lemma~\ref{lema51}}\label{sec53}

Let $L(\lambda,\rho)$ be as in (\ref{e14}). By setting $p=p_{h,\beta}$ 
and $q=i+|\bnu|-1$, we can write the equation (\ref{e54}) as follows:
\begin{equation}\label{e58}
    L(t\partial_t,x \partial_x)u
     = Ax^mt + Bx^p(t\partial_t)^h[x\partial_x]_{\beta}u
          + Ct^qx^{\mu}(t\partial_t)^j \partial_x^{\alpha}u
\end{equation}
where $[x \partial_x]_0=1$
and 
$[x \partial_x]_{\beta}=(x \partial_x)
    (x \partial_x-1) \cdots(x \partial_x-\beta+1)$
for $\beta \geq 1$. 
For the sake of clarity, we summarize the main hypotheses on (\ref{e58}):
\begin{multicols}{2}
\begin{itemize}
\item[h${}_1$)] $A>0$, $B>0$, $C>0$;
\item[h${}_2$)] $p,q, \mu \in \BN$, with $p \geq 1$,  $q \geq 1$, 
         $0 \leq \mu<m$;
\item[h${}_3$)] $(h,\beta) \in I_m$ and 
             $(h,\beta) \not\in {\mathcal N}_0$;
\item[h${}_4$)]
         $(j,\alpha) \in I_m$ and $\alpha>\mu$.
\end{itemize}
\end{multicols}
Since $(h,\beta) \in I_m$, we have $0 \leq h<m$ and so we can find an $i \in \{1,\ldots,p \}$ such that $m_{i+1} \leq h<m_i$
holds. We set $d=d_{h,\beta}$. Then, $d= \min\{y \in \BR \,;\, (h,\beta-y) \in {\mathcal N}_0 \}
         =  \beta-n_i-s_i(m_i-h)$.
Since $(h,\beta) \not\in {\mathcal N}_0$ we have $d>0$. 
The situation is illustrated in Figure~\ref{e59}.
\begin{figure}[ht]
	\centering
		\includegraphics{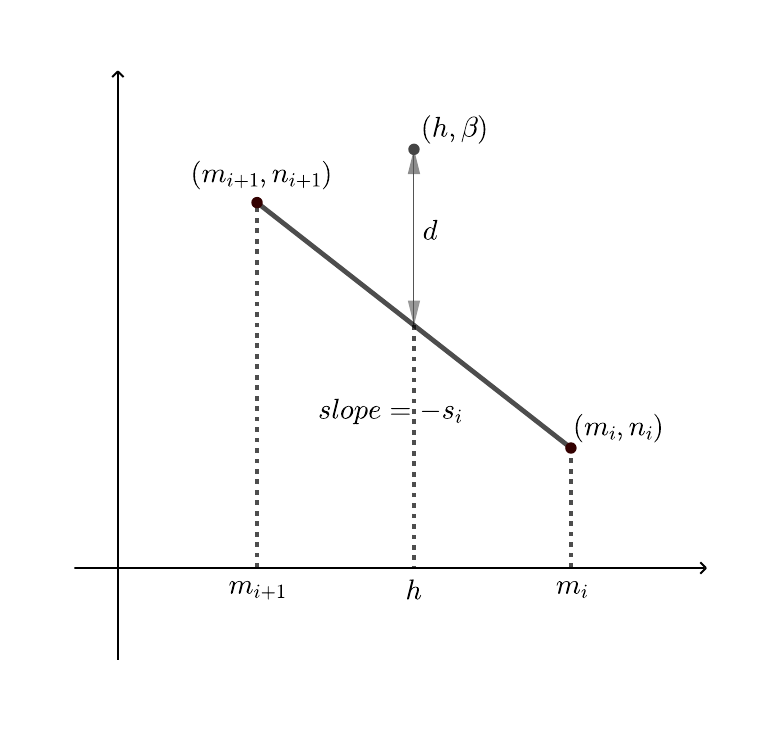}
	\caption{Geometry related to equation (\ref{e58})}
			\label{e59}
\end{figure}
Since $(h,\beta) \in I_m$ and $(h,\beta) \not\in {\mathcal N}_0$
we have $0 \leq s_i<1$. We set
$$\sigma_0^*= 1+\frac{d}{p}, \qquad
    s_0^* = 1+ \max \Bigl[ \, 0,\, 
        \frac{j+\alpha+(d/p)(\alpha-\mu)-m}{q} \Bigr].
$$
Then, Lemma~\ref{lema51} is stated in the following form:

\begin{prop}\label{prop56} The equation (\ref{e58}) has a unique
formal solution $u(t,x) \in \BC[[t,x]]$ satisfying 
$u(0,x) \equiv 0$, and it belongs to the class
$G\{t,x\}_{(s,\sigma)}$ if and only if $(s,\sigma)$ satisfies 
$s \geq s_0^*$ and $\sigma \geq \sigma_0^*$.
\end{prop}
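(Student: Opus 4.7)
My plan is to split the proof of Proposition~\ref{prop56} into three parts: existence and uniqueness, the sufficient direction ($s \geq s_0^*$ and $\sigma \geq \sigma_0^*$ imply the Gevrey conclusion), and the converse.

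For existence and uniqueness I expand $u(t,x) = \sum_{k \geq 1} u_k(x) t^k$ and, by comparing coefficients of $t^k$ in (\ref{e58}), obtain the cascade
\begin{equation*}
\bigl[L(k, x\partial_x) - Bk^h x^p [x\partial_x]_\beta\bigr] u_k(x) = \delta_{k,1}\, A x^m + C(k-q)^j x^\mu \partial_x^\alpha u_{k-q}(x),
\end{equation*}
with $u_{k-q} \equiv 0$ whenever $k \leq q$. The operator on the left sends $x^l \mapsto L(k,l)\,x^l - Bk^h [l]_\beta\, x^{l+p}$, and by hypothesis c${}_2$) we have $L(k,l) \geq k^m \geq 1$, so it is lower-triangular with invertible diagonal. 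Comparing coefficients of $x^l$ yields $u_{k,l} = (g_l + Bk^h [l-p]_\beta\, u_{k,l-p})/L(k,l)$, where $g_l$ depends only on previously determined data; this gives existence and uniqueness by induction on $(k,l)$, and positivity of $A, B, C, L$ immediately yields $u(t,x) \gg 0$.

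For the sufficient direction I view (\ref{e58}) as a new instance of (\ref{e11}) and invoke Theorem~\ref{maintheorem}. Since the operator $L$ is unchanged, the Newton polygon $\mathcal{N}_0$, conditions (N), (GP) and A${}_1$)-A${}_3$) are all inherited from the ambient equation (the new coefficient $b_{h,\beta}(x) = Bx^{p+\beta}$ satisfies A${}_3$) trivially). The auxiliary equation has $\Lambda_1 = \{(h,\beta)\}$ with $p_{h,\beta} = p$ and $d_{h,\beta} = d$, so its irregularity is exactly $\sigma_0^* = 1 + d/p$. The remainder $R_2 = C t^q x^\mu z_{j,\alpha}$ is the sole source of a finite $L_{\mu',j',\alpha'}$, giving $L_{\mu, j, \alpha} = q$ and $+\infty$ otherwise; substitution into (\ref{e25}), using the identity $\mu + \sigma_0^*(\alpha - \mu) = \alpha + (d/p)(\alpha - \mu)$, yields $s_0 = s_0^*$. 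Theorem~\ref{maintheorem} then delivers $u(t,x) \in G\{t,x\}_{(s,\sigma)}$ whenever $s \geq s_0^*$ and $\sigma \geq \sigma_0^*$.

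For the converse I exploit $u \gg 0$ to extract the one-step lower bounds
\begin{equation*}
u_{k,l} \geq \frac{B k^h [l-p]_\beta}{L(k,l)}\, u_{k,l-p}, \qquad u_{k,l} \geq \frac{C (k-q)^j [l+\alpha-\mu]_\alpha}{L(k,l)}\, u_{k-q, l+\alpha-\mu},
\end{equation*}
seeded by $u_{1,m} \geq A/L(1,m) > 0$, and iterate them along chains from $(1,m)$ that alternate blocks of $B$-applications (shifting $l \mapsto l+p$) with $C$-applications (shifting $(k,l) \mapsto (k+q,\, l-(\alpha-\mu))$), ending at a carefully chosen $(K,L) = (1+nq, L)$. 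The idea is to calibrate the number $n$ of $C$-steps against the total number of $B$-steps so that the trajectory runs along the edge $\Gamma_i$ of $\mathcal{N}_0$ containing the projection of $(h,\beta)$, i.e.\ $K \sim L^{s_i}$; by parts (ii)--(iii) of Lemma~\ref{lema52} this bounds $L(K,L)$ from above by $K^{m_i} L^{n_i}$, and Lemma~\ref{lema55} together with Stirling's formula converts the accumulated product of $[\cdot]_\beta/L$ and $[\cdot]_\alpha/L$ factors into a lower bound $u_{K,L} \gtrsim K!^{s_0^* - 1} L!^{\sigma_0^* - 1}$, contradicting any strictly smaller Gevrey index. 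The main obstacle lies in this calibration: one must identify the correct slope $s_i$ and the right ratio of $B$-steps to $C$-steps so that the denominators $\prod L(k_r, l_r)$ are bounded by the sharp edge-value $K^{m_i} L^{n_i}$ and the resulting exponent in the lower bound is exactly $d/p$, rather than the pessimistic $(\beta - n_p)/p$ produced by a naive pure-$B$ iteration along $k=1$; the geometric lemmas and the comparison of factorial growth on the $K$- and $L$-scales in Lemma~\ref{lema55} are designed precisely to close this gap.
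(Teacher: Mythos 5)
Your existence/uniqueness step and your sufficiency step coincide with the paper's: the paper also checks (N), (GP) for $L$, regards (\ref{e58}) as an instance of (\ref{e11}) (so that the irregularity of the auxiliary equation is $1+d/p=\sigma_0^*$ and $L_{\mu,j,\alpha}=q$ gives $s_0=s_0^*$), and quotes Theorem~\ref{maintheorem}. The gap is in the converse. You propose a \emph{single} test family of coefficients $u_{K,L}$, built by alternating $B$- and $C$-steps and calibrated so that $K\sim L^{s_i}$, together with a combined bound $u_{K,L}\gtrsim K!^{\,s_0^*-1}L!^{\,\sigma_0^*-1}$ that is supposed to contradict ``any strictly smaller Gevrey index''. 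This cannot yield the inequality $s\geq s_0^*$: since $s_i<1$, along such a family $K!$ grows only like $\exp\bigl(O(L^{s_i}\log L)\bigr)$, which is subexponential in $L$; hence if $u\in G\{t,x\}_{(s,\sigma)}$ with $s<s_0^*$ but $\sigma$ very large, the resulting inequality $K!^{\,s_0^*-s}\,L!^{\,\sigma_0^*-\sigma}\leq CM^{K+L}$ is satisfied trivially and no contradiction arises. In other words, any family in which the $x$-degree tends to infinity at a rate polynomially tied to the $t$-degree can only detect $\sigma\geq\sigma_0^*$, never $s\geq s_0^*$. (Moreover, the combined bound you state is stronger than what the iteration gives: along the edge the accumulated factors yield essentially $l!^{\,d}$ \emph{divided} by factorials of arguments of size $l^{s_i}$, i.e.\ $L!^{\,\sigma_0^*-1}$ alone, not an extra factor $K!^{\,s_0^*-1}$.)

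The paper accordingly uses two distinct families. First, chains made of one $C$-step followed by a long run of $B$-steps, evaluated at $t$-degree $k_lq+1\approx(lp+m+p)^{s_i}$ and $x$-degree $lp+d_{k_l}$; the edge estimate $\phi(k,l)\lesssim k^{m_i}l^{n_i}$ of Lemma~\ref{lema52} gives a lower bound $\gtrsim l!^{\,d}$ up to factorials of $O(l^{s_i})$, and Lemma~\ref{lema55} forces $\sigma\geq 1+d/p$. Second, starting from those coefficients it runs $lp$ \emph{consecutive} $C$-steps which consume all of the accumulated $x$-degree, so the endpoint has bounded $x$-degree $d_k\in[m,m+p]$ and $t$-degree $\approx lpq$; there the gain per $C$-step is $\approx(lp)^{\,j+\alpha}/\phi\gtrsim(lp)^{\,j+\alpha-m}$, producing $u_{(k_l+lp)q+1,d_{k_l}}\gtrsim(lp)!^{\,K}$ with $K=j+\alpha+(d/p)(\alpha-\mu)-m$, and comparison with $((k_l+lp)q+1)!^{\,s-1}\approx(lpq)!^{\,s-1}$ (the factor $d_k!^{\,\sigma-1}$ being bounded) forces $s\geq s_0^*$. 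Your sketch contains the ingredients of the first family, but without a second family whose $x$-degree stays bounded the condition $s\geq s_0^*$ cannot be extracted; this is the missing idea, together with the attendant bookkeeping (the paper's construction keeps the $x$-degrees in arithmetic progressions $lp+d_k$ with $m\leq d_k\leq m+p$, and checks that distinct $l$ give distinct endpoint $t$-degrees so the extracted terms really are distinct coefficients of the series).
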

\begin{proof} As is seen in the first part of Section~\ref{secproofmaintheo}, $L(\lambda,\rho)$ satisfies (N) and (GP); therefore, the sufficiency follows from Theorem~\ref{maintheorem}. Our purpose is to show the necessity of the condition:
$s \geq s_0^*$ and $\sigma \geq \sigma_0^*$. 
\par
   Now, we suppose
that $u(t,x) \in \BC\{t,x\}_{(s,\sigma)}$ holds for some $s \geq 1$ 
and $\sigma \geq 1$. Let us show that $s \geq s_0^*$ and 
$\sigma \geq \sigma_0^*$ hold, in different steps.

In the discussion below, 
for two sequences of positive numbers $\{A_l \,;\, l \in \BN^* \}$
and $\{B_l \,;\, l \in \BN^* \}$ we write
$A_l \gtrsim B_l$ if there are $M>0$ and $H>0$ such that 
$A_l \geq MH^l B_l$ holds for all $l \in \BN^*$. In this case,
for $\rho>0$ we also write
$$
      \sum_{l \geq 1}A_l \rho^l 
                  \gtrsim \sum_{l \geq 1}B_l \rho^l.
$$
By Stirling's formula we have

\begin{lemma}\label{lema57}
The following statements hold:
\begin{itemize}
\item[(i)] We have $l^l \gtrsim l!$ and 
$l! \gtrsim l^l$.
\item[(ii)] For a fixed $n \in \BN^*$ we have
$(nl)! \gtrsim l!^n$ and $l! \gtrsim (nl)!^{1/n}$.
\item[(iii)] For fixed $m,n \in \BN^*$ we have $(nl+m)^l \gtrsim l!^n$.
\end{itemize}
\end{lemma}

   \noindent {\bf Step 1.} Let $u(t,x)=\sum_{k \geq 1} u_k(x) t^k \in (\BC[[x]])[[t]]$ be a formal solution of (\ref{e58}). Then, the coefficients $u_k(x)$ 
($k=1,2,\ldots$) are determined by the recurrence 
formulas
$$
     L(1,x \partial_x)u_1 
            = Ax^m + B x^p [x \partial_x]_{\beta}u_1 
$$
$$
     L(k,x \partial_x)u_k =
         B x^p k^h[x \partial_x]_{\beta}u_k
        + C(k-q)^j x^{\mu}\partial_x^{\alpha}u_{k-q},\qquad k \geq 2.
$$
The function $u_1(x)$ is given by $u_1(x)= \sum_{l \geq 0} A_{0,lp+m} x^{lp+m}
$ with 
\begin{equation}\label{e510}
    A_{0,lp+m}
    = \frac{A B^l [m]_{\beta}[p+m]_{\beta}
                     \cdots [(l-1)p+m]_{\beta}}
       {L(1,m) L(1,p+m) \cdots L(1,lp+m)}, \quad l \geq 0.
\end{equation}
Moreover, one can check that $u(t,x)$ has the form
\begin{equation}\label{e511}
    u(t,x)= \sum_{k \geq 0} u_{kq+1}(x)t^{kq+1} 
\end{equation}
and the coefficients $u_{kq+1}(x)$ ($k=1,2,\ldots$) are determined 
by the following recurrence formulas:
\begin{equation}\label{e512}
     L(kq+1,x \partial_x)u_{kq+1}=B x^p (kq+1)^h[x \partial_x]_{\beta}u_{kq+1}
     + C((k-1)q+1)^j x^{\mu}\partial_x^{\alpha}u_{(k-1)q+1}.
\end{equation}

\noindent{\bf Step 2.} We set $d_0=m$, and define $(l_{k-1},d_k)$ 
($k=1,2,\ldots$) by 
$$l_{k-1} = \min \{ l \in \BN:  lp+d_{k-1} \geq m+\alpha-\mu \},\qquad d_k = l_{k-1}p+d_{k-1}-\alpha+\mu,
$$
inductively on $k$.  

\begin{lemma}\label{lema58}
For any $k \in \BN^*$ we have the following results:
\begin{itemize}
\item[(i)] $0 \leq l_{k-1} \leq \alpha$ and 
$m \leq d_k \leq m+p$.
\item[(ii)] $lp+d_{k-1}-\alpha+\mu=(l-l_{k-1})p+d_k$.
\item[(iii)] If $C_{lp+d_{k-1}} \geq 0$ holds for all $l \in \BN$, we 
have
$$ x^{\mu} \partial_x^{\alpha}
                 \sum_{l \geq 0}C_{lp+d_{k-1}}x^{lp+d_{k-1}}\gg \sum_{l \geq 0}C_{lp+d_{k}+(\alpha-\mu)}x^{lp+d_k}\gg C_{d_{k}+(\alpha-\mu)}x^{d_k}= C_{l_{k-1}p+d_{k-1}} x^{d_k}.
$$
\end{itemize}
\end{lemma}
\begin{proof}
Since 
$\alpha p+d_0 \geq \alpha+m \geq \alpha+m-\mu$ holds, we have 
$l_0 \leq \alpha$. By the definition of $d_1$ we have 
%\begin{align*}
%     d_1 &=m+ \min\{lp+d_0-m-\alpha+\mu \,;\,
%               lp+d_0 \geq m+\alpha-\mu, l \in \BN \} \\
%       &= m+ \min\{lp-(\alpha-\mu) \,;\,
%               lp \geq \alpha-\mu, l \in \BN \}\leq m+p;
%\end{align*}
%therefore, we have 
$m \leq d_1 \leq m+p$. Let us show the general case of (i) by induction on $k$. Let $k \geq 2$ and suppose that $0 \leq l_{k-2} \leq \alpha$ and 
$m \leq d_{k-1} \leq m+p$ are known. Since 
$\alpha p+d_{k-1} \geq \alpha+m \geq \alpha+m-\mu$ holds, 
we have $l_{k-1} \leq \alpha$. By the definition of $d_k$
%we have $d_k=m+ \min\{lp+d_{k-1}-m-\alpha+\mu \,;\,lp+d_{k-1} \geq m+\alpha-\mu, l \in \BN \}$
, and taking into account that $\alpha>\mu$ we get 
$$
    m \leq d_k \leq m+\max \{d_{k-1}-(m+\alpha-\mu), \, p \}\le m+\max \{p-(\alpha-\mu), \, p \}
            = m+p,
$$
which entails (i).

The result (ii) is clear from the definition of $d_k$.

In view of the second statement, and since 
$l_{k-1}p+d_{k-1}=d_k+\alpha-\mu \geq m+\alpha-\mu>\alpha$
holds, (iii) follows from the fact that
\begin{align*}
    &x^{\mu} \partial_x^{\alpha}
        \sum_{l \geq 0}C_{lp+d_{k-1}}x^{lp+d_{k-1}} = \sum_{{\scriptstyle l \geq 0}
              \atop{\scriptstyle (lp+d_{k-1} \geq \alpha)}}
       C_{lp+d_{k-1}} \frac{(lp+d_{k-1})!}{(lp+d_{k-1}-\alpha)!}
           x^{lp+d_{k-1}-\alpha+\mu} \\
    &\gg \sum_{l \geq l_{k-1}}
       C_{lp+d_{k-1}} 
           x^{lp+d_{k-1}-\alpha+\mu}\gg C_{d_k+(\alpha-\mu)} x^{d_k}
           = C_{l_{k-1}p+d_{k-1}} x^{d_k}.
\end{align*}

\end{proof}

\noindent{\bf Step 3.} We set $w_0(x)=u_1(x)$. By (iii) of Lemma~\ref{lema58} we have $       C x^{\mu}\partial_x^{\alpha}w_0
       \gg C A_{0,l_0p+m}x^{d_1}=K_1x^{d_1}$ with $K_1=C A_{0,l_0p+m}$.  Let us define $w_{1}(x)$ by the 
solution of
$$
     L(q+1,x \partial_x)w_{1}\\
     =B x^p (q+1)^h [x \partial_x]_{\beta}w_{1}
             + K_1x^{d_1}.
$$
Then we have $u_{q+1}(x) \gg w_{1}(x)= \sum_{l \geq 0} A_{1,lp+d_1}x^{lp+d_1}
$, where
$$
     A_{1,lp+d_1}
    = \frac{K_1 B^l (q+1)^{hl}[d_1]_{\beta}[p+d_1]_{\beta}
                 \cdots [(l-1)p+d_1]_{\beta}}
       {L(q+1,d_1) L(q+1,p+d_1) \cdots L(q+1,lp+d_1)}, 
        \quad l \geq 0.
$$
Regarding (iii) of Lemma~\ref{lema58} we derive $C(q+1)^jx^{\mu}\partial_x^{\alpha}w_{1}
       \gg C (q+1)^jA_{1,l_1p+d_1}x^{d_2}
       = K_2x^{d_2}
$ with $K_2=C (q+1)^jA_{1,l_1p+d_1}$.  
%We define $w_{2}(x)$ by the solution of
%$$
%     L(2q+1,x \partial_x)w_{2}\\
%     =B x^p (2q+1)^h [x \partial_x]_{\beta}w_{2}
%             + K_2x^{d_2}.
%$$
%Then we have $u_{2q+1}(x) \gg w_{2}(x)$.

The construction follows recursively. Assume $w_{k-1}(x)= \sum_{l \geq 0} A_{k-1,lp+d_{k-1}}x^{lp+d_{k-1}}$. By setting $K_k=C ((k-1)q+1)^jA_{k-1,l_{k-1}p+d_{k-1}}$ and defining $w_{k}(x)$ by the solution of 
$$
     L(kq+1,x \partial_x)w_{k}\\
     =B x^p (kq+1)^h [x \partial_x]_{\beta}w_{k}
             + K_kx^{d_k},
$$
then we have $u_{kq+1}(x) \gg w_{k}(x)$ and $w_{k}(x)= \sum_{l \geq 0} A_{k,lp+d_k}x^{lp+d_k}
$, where
\begin{equation}\label{e513}
    A_{k,lp+d_k} = \frac{K_k B^l (kq+1)^{hl}[d_k]_{\beta}[p+d_k]_{\beta}
                 \cdots [(l-1)p+d_k]_{\beta}}
       {L(kq+1,d_k) L(kq+1,p+d_k) \cdots L(kq+1,lp+d_k)}, 
       \quad l \geq 0.
\end{equation}

\noindent {\bf Step 4.} By the discussion in Step 3 we have the following
result. We can define $(K_k,A_{k,lp+d_k})$ 
($k \in \BN$ and $l \in \BN$) inductively on $k$: 
$K_0=A$, $A_{0,lp+d_0}$ as in (\ref{e510}), and for $k \geq 1$ we set
$K_k=C ((k-1)q+1)^jA_{k-1,l_{k-1}p+d_{k-1}}$ and $A_{k,lp+d_k}$ as in (\ref{e513}). We define
$$
    w(t,x)= \sum_{k \geq 0, l \geq 0}
                 A_{k,lp+d_k}t^{kq+1}x^{lp+d_k}.
$$
Then we have $u(t,x) \gg w(t,x)$.  

\begin{lemma}\label{lema59}
In the previous situation, the following statements hold:
\begin{itemize}
\item[(i)] There are $C_1>0$ and $H_1>0$ such that
\begin{equation}\label{e514}
     K_k \geq C_1{H_1}^k \frac{1}{k!^{m(\alpha+1)}}
     \quad \mbox{for any $k=0,1,2,\ldots$}.  
\end{equation}
\item[(ii)] Let $s_i$ be as in Figure~\ref{e59}. For $l \in \BN^*$ we set 
$k_l = [((lp+m+p)^{s_i}-1)/q]$.  Then, there are $C_2>0$, 
$H_2>0$, $a>0$ and $b>0$ such that
\begin{equation}\label{e515}
    A_{k_l,lp+d_{k_l}} \geq C_2{H_2}^l 
    \frac{l!^d}{[al^{s_i}+b]!^{m(\alpha+1)}}
    \quad \mbox{for any $l \in \BN^*$}.   
\end{equation}
\end{itemize}
\end{lemma}
\begin{proof} 
The definition of $K_k, A_{k,l_kp+d_k}$ entails
$$   K_{k+1}=C (kq+1)^jA_{k,l_kp+d_k} \geq   \frac{K_k C B^{l_k}}
          {L(kq+1,l_kp+d_k)^{l_k+1}}.
$$
Since $0 \leq l_k \leq \alpha$, we have
$B^{l_k} \geq b_1$ ($k \in \BN$) for some $b_1>0$. Since 
$l_kp+d_k \leq \alpha p+m+p$, by (ii) of Lemma~\ref{lema52} we have
$L(kq+1,l_kp+d_k) \leq b_2 (k+1)^m$ ($k \in \BN$) for some 
$b_2 \geq 1$. Therefore, 
$$
   K_{k+1} \geq  \frac{K_k C b_1}
          {(b_2(k+1)^m)^{l_k+1}}
        \geq  \frac{K_kC b_1}
          {(b_2(k+1)^m)^{\alpha+1}}
$$
for any $k \in \BN$. Since $K_0=A$, the previous inequality leads us to (\ref{e514}).

Let us show the second statement. By (\ref{e513}), (\ref{e514}) and (i) of Lemma~\ref{lema52} we have
\begin{equation}\label{e517}
    A_{k,lp+d_k}\geq \frac{C_1{H_1}^k}{k!^{m(\alpha+1)}} \times 
          \frac{B^l (kq+1)^{hl}\, l!^{\beta}}
                           {L(kq+1, lp+d_k)^{l+1}} %\\
 %   &\geq \frac{C_1{H_1}^k}{k!^{m(\alpha+1)}} \times 
 %         \frac{B^l (kq+1)^{hl}\, l!^{\beta}}
 %                   {(c_1\phi(kq+1, lp+d_k))^{l+1}}\\
    \geq \frac{C_1{H_1}^k}{k!^{m(\alpha+1)}} \times 
          \frac{B^l (kq+1)^{hl}\, l!^{\beta}}
                    {(c_1\phi(kq+1, lp+m+p))^{l+1}}.
\end{equation}
\par
   Since $k_l=[((lp+m+p)^{s_i}-1)/q]$, we have 
$k_lq+1 \leq (lp+m+p)^{s_i} \leq lp+m+p$ and so by (iii) of 
Lemma~\ref{lema52} we have
\begin{align*}
     \phi(k_lq+1, lp+m+p)
     &\leq \phi([(lp+m+p)^{s_i}], lp+m+p) \\
     &\leq c_3 (lp+m+p)^{s_im_i}(lp+m+p)^{n_i}, 
        \quad l \in \BN^*
\end{align*}
for some $c_3>0$.

 Similarly, we have $ {H_1}^{k_l} \geq (\min \{1,H_1\})^{(lp+m+p-1)/q}$. Since $0 \leq s_i<1$, we have
$(lp+m+p)^{s_i} \leq (lp)^{s_i}+(m+p)^{s_i}$ and so by setting
$a=p^{s_i}/q$ and $b=((m+p)^{s_i}-1)/q$ we have 
$k_l \leq [al^{s_i}+b]$. 
\par
   Since $(lp+m+p)^{s_i} \geq 1$ we have
$k_l \geq 0$ and so $k_lq+1 \geq 1$. Taking into account the previous statements, and  $k_lq+1 \geq (lp+m+p)^{s_i}-q$, we derive
$$    k_lq+1 \geq \max\{1, (lp+m+p)^{s_i}-q \} \geq \max\{1, (lp)^{s_i}-q \}
          \geq \frac{p^{s_i}}{q+1} l^{s_i}.
$$
In the last inequality we have used the fact that
$\max \{1,x-q \} \geq x/(q+1)$ for any $x \in \BR$.

Thus, by applying these estimates to (\ref{e517}), under the condition 
$k_l=[((lp+m+p)^{s_i}-1)/q]$ we have 
\begin{align*}
    A_{k_l,lp+d_{k_l}}
    &\geq \frac{C_1 (\min\{1,H_1 \})^{(lp+m+p-1)/q}}
                    {[al^{s_i}+b]!^{m(\alpha+1)}} \frac{B^l (p^{s_i}/(q+1))^{hl}(l^{s_i})^{hl}\, l!^{\beta}}
               {(c_0c_3 (lp+m+p)^{s_im_i}(lp+m+p)^{n_i})^{l+1}} \\
    &\gtrsim \frac{l!^{s_ih} l!^{\beta}}{[al^{s_i}+b]!^{m(\alpha+1)}
                \times l!^{s_im_i+n_i}}
       = \frac{l!^d}{[al^{s_i}+b]!^{m(\alpha+1)}}.
\end{align*}
In the above, we have used that $d=\beta-n_i-s_i(m_i-h)$. This proves (\ref{e515}).  
\end{proof}

{\bf Step 5.} Let us show the condition: $\sigma \geq \sigma_0^*$.
Since $u(t,x) \in \BC\{t,x\}_{(s,\sigma)}$ is supposed and since
$u(t,x) \gg w(t,x)$ is known, we have 
$w(t,x) \in \BC\{t,x\}_{(s,\sigma)}$, that is, 
$$
   \sum_{k \geq 0, l \geq 0} \frac{A_{k,lp+d_k}}
         {(kq+1)!^{s-1}(lp+d_k)!^{\sigma-1}} \rho^{kq+1}
       \rho^{lp+d_k} <\infty
$$
holds for some $0<\rho \leq 1$. 
\par
   If we set
$k_l=[((lp+m+p)^{s_i}-1)/q]$ we have $k_lq+1 \leq (lp+m+p)^{s_i}
\leq (lp)^{s_i}+(m+p)^{s_i}=a_1l^{s_i}+b_1$ with $a_1=p^{s_i}$
and $b_1=(m+p)^{s_i}$. Therefore, by (ii) of Lemma~\ref{lema59} we have
\begin{align}
   \infty &> \sum_{l \geq 1, k_l=[((lp+m+p)^{s_i}-1)/q]}
       \frac{A_{k_l,lp+d_{k_l}} 
                       {\rho}^{k_lq+1} {\rho}^{lp+d_{k_l}}}
         {(k_lq+1)!^{s-1}(lp+d_{k_l})!^{\sigma-1}}  \label{e518}\\
    &\geq \sum_{l \geq 1} 
      \frac{C_2 {H_2}^l l!^d {\rho}^{lp+m+p} {\rho}^{lp+m+p}}
        {[al^{s_i}+b]!^{m(\alpha+1)}
         [a_1l^{s_i}+b_1]!^{s-1}(lp+m+p)!^{\sigma-1}} \nonumber\\
    &\gtrsim \sum_{l \geq 1} 
      \frac{l!^d {\rho}^{lp+m+p} {\rho}^{lp+m+p}}
        {[al^{s_i}+b]!^{m(\alpha+1)}
         [a_1l^{s_i}+b_1]!^{s-1} l!^{p(\sigma-1)}}.\nonumber
\end{align}
If $d> p(\sigma-1)$ holds, we can derive a contradiction in the 
following way: if we set $2\epsilon=d-p(\sigma-1)>0$, by (\ref{e518}) 
and (i) of Lemma~\ref{lema55} we have
$$   \infty > \sum_{l \geq 1} \frac{l!^{\epsilon}}
            {[al^{s_i}+b]!^{m(\alpha+1)}
         [a_1l^{s_i}+b_1]!^{s-1}} l!^{\epsilon}
         {\rho_1}^{l} 
    \geq C_1 \sum_{l \geq 1} l!^{\epsilon}
         {\rho_1}^{l} = \infty
$$
for some $0<\rho_1<\rho$ and some $C_1>0$. Then, 
$\sigma \geq 1+d/p=\sigma_0^*$.

   {\bf Step 6.} We express every coefficient of $u(t,x)$ in (\ref{e511})  in the form
$$
     u_{kq+1}(x)= \sum_{l \geq 0} u_{kq+1,l}x^l.
$$
By (\ref{e512}) we have $L(kq+1,x \partial_x)u_{kq+1}
    \gg C((k-1)q+1)^j x^{\mu} \partial_x^{\alpha} u_{(k-1)q+1}
$, which entails
%$$   L(kq+1,l)u_{kq+1,l}\geq C((k-1)q+1)^j \frac{(l+\alpha-\mu)!}{(l-\mu)!}
%                u_{(k-1)q+1,(\alpha-\mu)+l}, \quad
%        l \geq \mu,
%$$
%that is,
$$
    u_{kq+1,l} 
    \geq \frac{C((k-1)q+1)^j(l-\mu+1)^{\alpha}}{L(kq+1,l)}
          u_{(k-1)q+1,(\alpha-\mu)+l}, \quad l \geq \mu.
$$
Hence, by using this estimate $lp$-times and by the estimate
$u_{kq+1}(x) \gg w_k(x)$ we have
\begin{align}
    u_{(k+lp)q+1,d_k} &\geq  
       \frac{C^{lp}\prod_{n=0}^{lp-1}((k+n)q+1)^j
          \prod_{n=0}^{lp-1}(n(\alpha-\mu)+d_k-\mu+1)^{\alpha}}
      {\prod_{n=0}^{lp-1}
           L((k+lp-n)q+1, n(\alpha-\mu)+d_k)} u_{kq+1, lp(\alpha-\mu)+d_k}\label{e519} \\
    &\geq
       \frac{C^{lp} (lp)!^j(lp)!^{\alpha}}
           {L((k+lp)q+1, lp(\alpha-\mu)+d_k)^{lp}}
                    A_{k, lp(\alpha-\mu)+d_k}\nonumber \\
    &\geq
       \frac{C^{lp} (lp)!^j(lp)!^{\alpha}}
           {(c_1 \phi((k+lp)q+1, lp(\alpha-\mu)+d_k))^{lp}}
                    A_{k, lp(\alpha-\mu)+d_k}.\nonumber
\end{align}
Here, we set $k_l=[((lp(\alpha-\mu)+m+p)^{s_i}-1)/q]$ 
($l \in \BN^*$).  Since $m_r+n_r\leq m$ ($r=1,\ldots,p$) hold,
we have
\begin{equation}\label{e520}\phi((k_l+lp)q+1, lp(\alpha-\mu)+d_{k_l}) \leq \sum_{r=1}^p
       \bigl((lp(\alpha-\mu)+m+p)^{s_i}+lpq \bigr)^{m_r}
               \bigl(lp(\alpha-\mu)+m+p \bigr)^{n_r}\leq c_3(lp)^m,
\end{equation}							
for $l \in \BN^*$, for some $c_3>0$. Therefore, under the condition
$k_l=[((lp(\alpha-\mu)+m+p)^{s_i}-1)/q]$, by applying (\ref{e520}) and 
(\ref{e515}) to (\ref{e519}) we have
\begin{align*}
    u_{(k_l+lp)q+1,d_{k_l}} 
    &\geq
       \frac{C^{lp} (lp)!^j(lp)!^{\alpha}}{(c_1c_3(lp)^m)^{lp}}
          \times C_2 {H_2}^{l(\alpha-\mu)}
        \frac{(l(\alpha-\mu))!^d}
             {[a(l(\alpha-\mu))^{s_i}+b]!^{m(\alpha+1)}}\\
    &\gtrsim
       \frac{(lp)!^j(lp)!^{\alpha}}{(lp)!^m}
          \frac{(lp)!^{(d/p)(\alpha-\mu)}}
             {[a_0(lp)^{s_i}+b]!^{m(\alpha+1)}}
\end{align*}
where $a_0=a((\alpha-\mu)/p)^{s_i}$.  Thus, by setting
$K=j+\alpha+(d/p)(\alpha-\mu)-m$ and 
$k_l=[((lp(\alpha-\mu)+m+p)^{s_i}-1)/q]$ we have
\begin{equation}\label{e521}
   u_{(k_l+lp)q+1,d_{k_l}}
    \gtrsim \frac{(lp)!^K}{[a_0(lp)^{s_i}+b]!^{m(\alpha+1)}},
    \quad l \in \BN^*. 
\end{equation}

   {\bf Step 7.} Let us write
$k_l=[((lp(\alpha-\mu)+m+p)^{s_i}-1)/q]$ (for $l \in \BN^*$). Then, it is straight that $l_1 \ne l_2$ implies 
$k_{l_1}+l_1p \ne k_{l_2}+l_2p$. %The proof is as follows.
%\par
%   Suppose that $l_1<l_2$ and
%$k_{l_1}+l_1p = k_{l_2}+l_2p$ hold. Then, we have
%\begin{align*}
%   k_{l_2}q+1 &= (k_{l_1}q+1) -(l_2-l_1)pq \\
%    &\leq (l_1p(\alpha-\mu)+m+p)^{s_i}-q
%\end{align*}
%and so we have
%\begin{align*}
%    k_{l_2} &\leq \frac{(l_1p(\alpha-\mu)+m+p)^{s_i}-1}{q}-1 \\
%    &<\Bigl[\frac{(l_1p(\alpha-\mu)+m+p)^{s_i}-1}{q} \Bigr]
%       = k_{l_1} \leq k_{l_2},
%\end{align*}
%which is a contradiction. 

\vspace{3mm}

{\bf Step 8.} Lastly, by using (\ref{e521}) let us show the 
condition $s \geq s_0^*$.
Since $u(t,x) \in \BC\{t,x \}_{(s,\sigma)}$ is 
supposed, there is a $0<\rho \leq 1$ such that
$$
    \sum_{k \geq 0, l \geq 0} 
        \frac{u_{kq+1,l}}{(kq+1)!^{s-1}l!^{\sigma-1}}
           \rho^{kq+1}\rho^l < \infty.
$$
Therefore, by (\ref{e521}) and Step 7 we have
\begin{align*}
   \infty &> \sum_{l \geq 1, k=k_l} 
             \frac{u_{(k+lp)q+1,d_k}}
                {((k+lp)q+1)!^{s-1} d_k!^{\sigma-1}}
             {\rho}^{(k+lp)q+1}\rho^{d_k} \\
    &\gtrsim \sum_{l \geq 1}
          \frac{(lp)!^K}{[a_0(lp)^{s_i}+b]!^{m(\alpha+1)}
                    ((k_l+lp)q+1)!^{s-1} d_k!^{\sigma-1}}
               {\rho}^{(k_l+lp)q+1}\rho^{d_k}.
\end{align*}
We have
\begin{align*}
   ((k_l+lp)q+1) &=(k_lq+1) + lpq \leq (lp(\alpha-\mu)+m+p)^{s_i}+lpq \\
    &\leq (lp(\alpha-\mu))^{s_i}+ (m+p)^{s_i}+lpq \leq a_2 (lpq)^{s_i} +lpq \leq (a_2+1)(lpq),
\end{align*}
with $a_2=((\alpha-\mu)/q)^{s_i}+(m+p)^{s_i}$. Hence, by 
taking a smaller $0<\rho_1<\rho$ we have
\begin{equation}\label{e522}
   \infty > \sum_{l \geq 1}
          \frac{(lpq)!^{K/q}}{[a_0(lp)^{s_i}+b]!^{m(\alpha+1)}
            [a_2 (lpq)^{s_i} +lpq]!^{s-1}}
               {\rho_1}^{(a_2+1)lpq}.  
\end{equation}
If $K/q>(s-1)$, we derive a contradiction. More precisely, set $\epsilon=(K/q-(s-1))/3$, by (\ref{e522}) and Lemma~\ref{lema55} we have
\begin{align*}
   \infty &> \sum_{l \geq 1}
      \frac{(lpq)!^{\epsilon}}{[a_0(lp)^{s_i}+b]!^{m(\alpha+1)}}
      \frac{(lpq)!^{\epsilon+(s-1)}}{
            [a_2 (lpq)^{s_i} +lpq]!^{s-1}} \times (lpq)!^{\epsilon}
               {\rho_1}^{(a_2+1)lpq} \\
    &\geq C_2 \sum_{l \geq 1}(lpq)!^{\epsilon}
            {\rho_1}^{(a_2+1)lpq}  = \infty
\end{align*}
for some $C_2>0$. This entails that $s \geq s_0^*$, and completes the proof of Proposition~\ref{prop56}.

\end{proof}	

In the following, we state a variant of Proposition~\ref{prop56}. Let us consider the equation
\begin{equation}\label{e523}
    L(t\partial_t,x \partial_x)u
     = Axt + Bx^p(t\partial_t)^h(x\partial_x)^{\beta}u
          +C t^qx^{\mu}(t\partial_t)^j \partial_x^{\alpha}u
\end{equation}
under the same assumptions h${}_1$)$\sim$h${}_4$) as 
in (\ref{e58}). 
%This is nothing but (5.8) with $x^m$ and $[x \partial_x]_{\beta}$ replaced by $x$ and $(x \partial_x)^{\beta}$. 
Let $\sigma_0^*$ and $s_0^*$ be as in Proposition~\ref{prop56}. Then, an analogous argument as above, the next result is attained.

\begin{prop}\label{prop510}
The equation (\ref{e523}) has a unique formal solution $u(t,x) \in \BC[[t,x]]$ satisfying $u(0,x) \equiv 0$, and it belongs to the class
$G\{t,x\}_{(s,\sigma)}$ if and only if $(s,\sigma)$ satisfies 
$s \geq s_0^*$ and $\sigma \geq \sigma_0^*$.
\end{prop}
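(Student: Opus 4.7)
The plan is to mimic the proof of Proposition~\ref{prop56}, pointing out the modifications needed for the two differences in (\ref{e523}): the inhomogeneous seed $Axt$ (instead of $Ax^mt$) and the iterated operator $(x\partial_x)^\beta$ (instead of the falling factorial $[x\partial_x]_\beta$).

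For the sufficiency direction, I would first rewrite $(x\partial_x)^\beta = \sum_{\gamma=0}^{\beta} S(\beta,\gamma)\, x^\gamma \partial_x^\gamma$ using Stirling numbers of the second kind, which places (\ref{e523}) into the form (\ref{e11}) with coefficients $c_{h,\gamma}(x) = B\, S(\beta,\gamma) x^p$ belonging to $\Lambda_1$. The associated Newton polygon ${\mathcal N}_0$, the irregularity $\sigma_0$, and the index $s_0$ all coincide with $\sigma_0^*$ and $s_0^*$, because the extremal pair $(h,\beta) \notin {\mathcal N}_0$ still governs the relevant maxima in (\ref{e24}) and (\ref{e25}). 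Conditions (N) and (GP) are verified by the same positivity argument as at the beginning of Section~\ref{secproofmaintheo}, so Theorem~\ref{maintheorem} yields $u(t,x) \in G\{t,x\}_{(s,\sigma)}$ for $s \geq s_0^*$ and $\sigma \geq \sigma_0^*$.

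For the necessity direction I would follow Steps 1--8 of the proof of Proposition~\ref{prop56} with essentially cosmetic changes. The formal solution again has the form $u(t,x) = \sum_{k \geq 0} u_{kq+1}(x) t^{kq+1}$, and the base case gives $u_1(x)$ supported on $\{lp+1 \,;\, l \geq 0\}$ with leading exponent $d_0 = 1$ in place of $m$. The recurrence (\ref{e513}) is replaced by
\[
   A_{k,lp+d_k} = \frac{K_k\, B^l (kq+1)^{hl}\, d_k^\beta (p+d_k)^\beta \cdots ((l-1)p+d_k)^\beta}{L(kq+1,d_k)\, L(kq+1,p+d_k) \cdots L(kq+1,lp+d_k)},
\]
with the ordinary power $(np+d_k)^\beta$ replacing the falling factorial $[np+d_k]_\beta$. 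The inductive bound $m \leq d_k \leq m+p$ for $k \geq 1$ in Lemma~\ref{lema58} still holds (the argument uses only $d_k = l_{k-1}p + d_{k-1} - \alpha + \mu$ together with the defining inequality for $l_{k-1}$, which is insensitive to the value of $d_0$), and since $\prod_{n=0}^{l-1}(np+d_k)^\beta$ has the same asymptotic growth $\gtrsim l!^\beta$ as the corresponding falling-factorial product, the lower bounds (\ref{e514}) and (\ref{e515}) of Lemma~\ref{lema59} carry through unchanged. The contradictions of Steps 5 and 8 then produce $\sigma \geq \sigma_0^*$ and $s \geq s_0^*$.

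The main technical point to verify is the asymptotic equivalence between $\prod_{n=0}^{l-1}(np+d_k)^\beta$ and $\prod_{n=0}^{l-1}[np+d_k]_\beta$ uniformly in $l$ and $k$. This reduces to the elementary estimate $[y]_\beta/y^\beta \to 1$ as $y \to \infty$, combined with the uniform bound $m \leq d_k \leq m+p$: the two products differ by at most a geometric factor in $l$, which is absorbed into the $\gtrsim$ notation of Section~\ref{sec53}. With this in hand, every inequality of the proof of Proposition~\ref{prop56} transfers to (\ref{e523}) with only the constants changed.
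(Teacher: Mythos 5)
Your proposal is correct and is exactly what the paper intends: the paper gives no separate proof of Proposition~\ref{prop510}, stating only that it follows by "an analogous argument" to Proposition~\ref{prop56}, and your adaptation (sufficiency by expanding $(x\partial_x)^{\beta}$ so that Theorem~\ref{maintheorem} applies with the same $\sigma_0^*,s_0^*$; necessity by rerunning Steps 1--8 with $d_0=1$ and ordinary powers $(np+d_k)^{\beta}\geq (n+1)^{\beta}$ in place of the falling factorials) is precisely the intended argument, with the effect of the changed seed and operator correctly confined to constants.
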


\section{A generalization}\label{sec6}

Let $C(x;\lambda,\rho)$ be as in (\ref{e13}), ${\mathcal M}$ be a 
finite subset of $\BN \times \BN$, and let 
$\bz=\{z_{j,\alpha} \}_{(j,\alpha) \in {\mathcal M}}$ be the complex
variables in $\BC^N$ (with $N=\# {\mathcal M}$). We consider
\begin{equation}\label{e61}
    C(x; t\partial_t,x \partial_x)u = a(x)t + G_2 \bigl(t,x,
         \bigl\{(t\partial_t)^j \partial_x^{\alpha}u 
         \bigr\}_{(j,\alpha) \in {\mathcal M}} \bigr),
\end{equation}

\noindent where $G_2(t,x,\bz)$ is a holomorphic function in a neighborhood
of $(0,0,0) \in \BC_t \times \BC_x \times \BC_{\bz}^N$ whose Taylor 
expansion in $(t,\bz)$ has the form
$$
     G_2(t,x,\bz)= \sum_{i+|\bnu| \geq 2}
         g_{i,\bnu}(x) t^i \bz^{\bnu}
$$
with $\bnu=\{\nu_{j,\alpha} \}_{(j,\alpha) \in {\mathcal M}} \in \BN^N$,
$|\bnu|=\sum_{(j,\alpha) \in {\mathcal M}}\nu_{j,\alpha}$ and 
$\bz^{\bnu}=\prod_{(j,\alpha) \in {\mathcal M}}
         {z_{j,\alpha}}^{\nu_{j,\alpha}}$.
\par
   If ${\mathcal M}=I_m$, equation (\ref{e61}) coincides with (\ref{e11})
(or (\ref{e15})).%; therefore, (\ref{e61}) is a slight generalization of (\ref{e11}).
We can define the 
irregularity $\sigma_0$ of (\ref{e61}) at $x=0$ in the same way as (\ref{e24}). For $\mu \in \BN$ we set 
$$
    J_{\mu}=\{(i,\bnu) \in \BN \times \BN^N \,;\,
         i+|\bnu| \geq 2, |\bnu| \geq 1, 
                 (\partial_x^{\mu}g_{i,\bnu})(0) \ne 0 \}.
$$
For $\mu \in \BN$ and 
$\bnu=\{\nu_{j,\alpha} \}_{(j,\alpha) \in {\mathcal M}}$ satisfying 
$|\bnu| \geq 1$ we set

$$
K_{\bnu}=\{(j,\alpha) \in {\mathcal M} \,;\, \nu_{j,\alpha}>0 \},\quad m_{\bnu,\mu} = \max_{(j,\alpha) \in K_{\bnu}}
         \bigl( j+ \max\{\alpha, \mu+\sigma_0(\alpha-\mu)\}
         \bigr),$$
				$$s_0= 1+ \max \biggl[\,0 \,, \sup_{\mu \geq 0} \,
            \Bigl( \sup_{(i,\bnu) \in J_{\mu}}
              \frac{m_{\bnu,\mu}-m}{i+|\bnu|-1} \Bigr)
             \biggr].
$$

The same arguments as in Section~\ref{sec4} apply to obtain the following results.
	
\begin{theo}\label{teo61}
Suppose the conditions (N) and (GP) hold. Then, the equation (\ref{e61}) has a unique formal solution $u(t,x) \in \BC[[t,x]]$ satisfying $u(0,x) \equiv 0$, and it 
belongs to the class $G\{t,x \}_{(s,\sigma)}$ for any $s \geq s_0$ 
and $\sigma \geq \sigma_0$.
\end{theo}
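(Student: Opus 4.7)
The plan is to transport the argument of Section~\ref{sec4} to this general setting, with $I_m$ replaced by the finite set ${\mathcal M}$ and $R_2$ replaced by $G_2$ throughout. I would first establish existence and uniqueness of $u(t,x) = \sum_{k \geq 1} u_k(x) t^k \in \BC[[t,x]]$ from condition (N): substituting into (\ref{e61}) yields the recurrence $C(x; k, x\partial_x) u_k(x) = f_k(x)$, where $f_1(x) = a(x)$ and $f_k$ for $k \geq 2$ is a polynomial expression in $u_1, \ldots, u_{k-1}$ and their derivatives $\partial_x^\alpha$ indexed by $(j,\alpha) \in {\mathcal M}$. Proposition~\ref{prop38} applies without modification, producing
$${\mathcal B}^{(m)}_{\sigma_0}[|u_k|](x) \ll \frac{A(x)}{k^m}\,{\mathcal B}^{(m)}_{\sigma_0}[|f_k|](x),$$
and a straightforward induction on $k$ would show that each $u_k$ lies in $G\{x\}_{\sigma_0}$.

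The central step is to prove an analog of Lemma~\ref{lema41}: for any $s \geq s_0$, every $(j,\alpha) \in {\mathcal M}$, every $0 \leq \mu \leq m$, and all $k \geq 1$,
$${\mathcal B}^{(m-\mu)}_{\sigma_0}[k^j \partial_x^\alpha |u_k|](x) \ll \frac{(k-1)!^{s-1}}{k^{L-j-\max\{\alpha,\,\mu+\sigma_0(\alpha-\mu)\}}}\,H Y_k(x),$$
where $Y_k$ comes from a majorant equation of the form (\ref{e45}) and $L \in \BN^*$ is chosen large enough. Since ${\mathcal M}$ is finite, $m_{\bnu,\mu}$ would remain bounded uniformly in $(\bnu,\mu)$ (for $\mu$ exceeding $\max_{(j,\alpha) \in {\mathcal M}} \alpha$, $m_{\bnu,\mu}$ stabilizes to $\max_{(j,\alpha) \in K_{\bnu}}(j+\alpha)$), so I may take $L$ to exceed $j + \max\{\alpha, \mu+\sigma_0(\alpha-\mu)\}$ for all relevant triples $(j,\alpha,\mu)$. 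The induction then proceeds exactly as in Section~\ref{sec4}: the combinatorial inequalities (\ref{e410})--(\ref{e411}) transcribe verbatim, the hypothesis $(i+|\bnu|-1)(s-1) \geq m_{\bnu,\mu}-m$ being furnished by the very definition of $s_0$ adopted in Section~\ref{sec6}. Once this estimate is in place, the argument of Section~\ref{sec43} closes the proof: the series is dominated by $\sum_k HY_k(r) t^k / k^L$, which converges for small $|t|$ by the implicit function theorem applied to the majorant equation.

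The main obstacle I anticipate is adapting (\ref{e414})--(\ref{e415}) to accommodate $(j,\alpha) \in {\mathcal M}$ with possibly $\alpha > m$ or $j \geq m$. For large $\alpha$ I would write $\partial_x^\alpha = \partial_x^{(\alpha-\mu)+\mu}$ with $\mu \leq m$ chosen maximal, and iterate Lemma~\ref{lema33} to obtain a variant of Corollary~\ref{coro34}; the polynomial prefactors in $k$ and the constants arising from iteration should be absorbed into $k^L$ by enlarging $L$. For $j \geq m$ the extra factor $k^j$ is absorbed in the same way. This is bookkeeping rather than a genuine analytic difficulty, reflecting the fact that the arguments in Section~\ref{sec4} never essentially used the constraints $j + \alpha \leq m$ or $j < m$, only the finiteness of the index set.
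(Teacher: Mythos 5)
Your overall strategy (rerun Section~\ref{sec4} with $I_m$ replaced by ${\mathcal M}$ and $R_2$ by $G_2$) is exactly what the paper intends, but your transcription contains one genuine gap: you keep the induction hypothesis of Lemma~\ref{lema41} only for $0 \leq \mu \leq m$, which implicitly means you also keep the Taylor splitting of the coefficients at order $m$, namely $g_{i,\bnu}(x)=\sum_{\mu<m}g_{i,\bnu,\mu}x^{\mu}+x^m g_{i,\bnu,m}(x)$, and the operators ${\mathcal B}^{(m-\mu)}_{\sigma_0}$. With that splitting, the remainder term is estimated through the induction hypothesis at $\mu=m$, so inequality (\ref{e410}) is needed with the exponent $m_{\bnu,m}$; but the Section~\ref{sec6} definition of $s_0$ (with $\sup_{\mu\ge 0}$) only guarantees $(i+|\bnu|-1)(s_0-1)\geq m_{\bnu,\mu'}-m$ for those $\mu'$ with $(\partial_x^{\mu'}g_{i,\bnu})(0)\neq 0$. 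Since $\mu\mapsto m_{\bnu,\mu}$ is nonincreasing and strictly decreasing as long as some $(j,\alpha)\in K_{\bnu}$ has $\alpha>\mu$, one has $m_{\bnu,\mu'}<m_{\bnu,m}$ whenever $g_{i,\bnu}$ vanishes at $x=0$ to order $\mu'>m$ and some $\alpha>m$ occurs in $K_{\bnu}$ --- precisely the new situation the generalization allows. So your claim that ``the hypothesis $(i+|\bnu|-1)(s-1)\ge m_{\bnu,\mu}-m$ is furnished by the very definition of $s_0$'' fails there, and your argument only proves the theorem with a larger index in $t$. The paper's own example (\ref{e62}) shows the loss is real: there $m=1$, ${\mathcal M}=\{(0,2)\}$, $g_{i,\bnu}(x)=x^3$, $\sigma_0=2$; splitting at $x^m=x$ leaves the remainder $x\cdot x^2$ and forces $(s-1)\ge m_{\bnu,1}-m=\max\{2,1+2\cdot 1\}-1=2$, i.e.\ $s\ge 3$, whereas Theorem~\ref{teo61} asserts $s\ge s_0=2$ suffices.

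The repair is not mere bookkeeping on $L$: you must enlarge the order of the splitting and of the Borel operators. Take $M\geq\max\{m,\ \max_{(j,\alpha)\in{\mathcal M}}\alpha\}$, write $g_{i,\bnu}(x)=\sum_{\mu<M}g_{i,\bnu,\mu}x^{\mu}+x^{M}g_{i,\bnu,M}(x)$, state the induction hypothesis for all $0\leq\mu\leq M$ with the operators ${\mathcal B}^{(M-\mu)}_{\sigma_0}$, and extend Proposition~\ref{prop38} and Corollary~\ref{coro34} to the superscript $M$ (the proofs go through verbatim, with constants now depending on $M$, since $l!/(l-p)!\leq C\,[l-M]_+!/[l-p-M]_+!$ and ${\mathcal B}^{(M)}_{\sigma}[x^{\mu}f]=x^{\mu}{\mathcal B}^{(M-\mu)}_{\sigma}[f]$ for $\mu\leq M$, while the gain $1/k^m$ in Proposition~\ref{prop38} is untouched because it comes from $\phi(k,l)\geq k^m$). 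Because $m_{\bnu,\mu}$ stabilizes to $\max_{(j,\alpha)\in K_{\bnu}}(j+\alpha)$ for $\mu\geq M$, any nonvanishing remainder $g_{i,\bnu,M}\not\equiv 0$ gives some $\mu'\geq M$ with $(i,\bnu)\in J_{\mu'}$ and $m_{\bnu,\mu'}=m_{\bnu,M}$, so the $\sup_{\mu\geq 0}$ definition of $s_0$ now does furnish exactly the inequality needed in (\ref{e410})--(\ref{e411}); the rest of your outline (choice of $L$, the majorant equation (\ref{e45}), and the summation argument of Section~\ref{sec4}) then closes the proof as you describe.
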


\noindent{\bf Remark.}
\begin{itemize}
\item[(i)] The index $s_0$ is also expressed 
in the form
$$
     s_0= 1 + \max \biggl[ \, 0, \, \sup_{\mu \geq 0}
          \Bigl( \max_{(j,\alpha) \in {\mathcal M}}
              \frac{j+ \max\{\alpha, \mu+ \sigma_0(\alpha-\mu) \}-m}
                   {L_{\mu,j,\alpha}} \Bigr) 
           \biggr]
$$
where $L_{\mu,j,\alpha} = \mathrm{val} \bigl(
          ( \partial_{z_{j,\alpha}}
           \partial_x^{\, \, \mu} G_2)(t,0,\bz) \bigr)$
($\mu \in \BN$ and $(j,\alpha) \in {\mathcal M}$).
\item[(ii)] If $\sigma_0=1$, we have
$$
     s_0= 1 + \max \biggl[ \, 0, \, \sup_{\mu \geq 0}
          \Bigl( \max_{(j,\alpha) \in {\mathcal M}}
              \frac{j+\alpha -m}{L_{\mu,j,\alpha}} \Bigr) 
           \biggr].
$$
\end{itemize}

Hence, if $\sigma_0=1$ and 
${\mathcal M} \subset \{(j,\alpha) \,;\, j+\alpha \leq m \}$, 
we have $s_0=1$ and the formal power series solution $u(t,x)$ is 
convergent in a neighborhood of $(0,0) \in \BC_t \times \BC_x$. 

As to the optimality, to get the same result as in Theorem~\ref{teo27} we need some additional condition. We set:
\begin{align*}
    &M=\max\{\alpha \,;\, (j,\alpha) \in {\mathcal M} \}, \qquad {\mathcal M}_{\mu}=\{(j,\alpha) \in {\mathcal M} \,;\,
      \alpha \geq \mu \}, \quad 0 \leq \mu \leq M, \\
    &s_1= 1 + \max \biggl[ \, 0, \, \max_{0 \leq \mu \leq M}
          \Bigl( \max_{(j,\alpha) \in {\mathcal M_{\mu}}}
              \frac{j+\mu+ \sigma_0(\alpha-\mu)-m}
                   {L_{\mu,j,\alpha}} \Bigr) 
           \biggr].
\end{align*}
We note that if $\alpha \geq \mu$ we have 
$\max\{\alpha, \mu+ \sigma_0(\alpha-\mu) \}=\mu+ \sigma_0(\alpha-\mu)$.
\par
   In general, we have $s_0 \geq s_1$. By the same argument 
as in Section~\ref{secproofmaintheo} we have

\begin{theo}\label{teo63} Suppose the condition $s_0=s_1$.
In addition, assume the following conditions:
\begin{itemize}
\item[c-1)] $(\partial_x^{\, m}a)(0)>0$, $(\partial_x^{\,\,\mu}a)(0)>0$ for 
             $0 \leq \mu \leq M$ and $a(x) \gg 0$;
\item[c-2)]  $c_{j,\alpha}(0) \leq 0$  for any
              $(j,\alpha) \in I_m$, 
\item[c-3)]  $c_{j,\alpha}(x)-c_{j,\alpha}(0) \gg 0$ for any
              $(j,\alpha) \in I_m$, 
\item[c-4)] $g_{i,\bnu}(x) \gg 0$ for any $(i,\bnu)$ with 
             $i+|\bnu| \geq 2$.
\end{itemize}

Then, equation (\ref{e61}) has a unique formal solution 
$u(t,x) \in \BC[[t,x]]$ satisfying $u(0,x) \equiv 0$, and it belongs 
to the class $G\{t,x \}_{(s,\sigma)}$ if and only if $(s,\sigma)$
satisfies $s \geq s_0$ and $\sigma \geq \sigma_0$.
\end{theo}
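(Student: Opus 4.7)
The plan is to follow closely the architecture of the proof of Theorem~\ref{teo27}: first secure existence and sufficiency by reducing to (N), (GP) and invoking Theorem~\ref{teo61}; then obtain necessity by constructing, for each candidate triple $(\mu,j,\alpha)$ and each $(h,\beta)\in\Lambda_{out}$ that could realise the maximum in $s_0$, a tractable minorant equation whose formal solution has a precisely computable Gevrey index.

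First I would dispatch existence. Writing $L(\lambda,\rho)$ as in the opening of Section~\ref{secproofmaintheo}, condition c-2) yields $L(k,l)\geq k^m\geq 1$ for $(k,l)\in\BN^*\times\BN$, giving (N); the same positivity of the vertex coefficients, together with Lemma~\ref{lema22}, gives (GP) by splitting into $l\geq m$ and $l<m$ exactly as on pages before Section~\ref{secproofmaintheo}. Theorem~\ref{teo61} then produces the unique formal solution $u(t,x)\in\BC[[t,x]]$ with $u(0,x)\equiv 0$ and establishes $u(t,x)\in G\{t,x\}_{(s,\sigma)}$ whenever $s\geq s_0$ and $\sigma\geq\sigma_0$. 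Note that c-1), c-3), c-4) guarantee $u(t,x)\gg 0$, which is the crucial positivity we will exploit below.

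For necessity, suppose $u(t,x)\in G\{t,x\}_{(s,\sigma)}$ for some $s,\sigma\geq 1$. The hypothesis $s_0=s_1$ means that the supremum defining $s_0$ is realised among triples $(\mu,j,\alpha)$ with $0\leq\mu\leq M$, $(j,\alpha)\in\mathcal{M}_\mu$, hence $\alpha\geq\mu$; and when the numerator $j+\mu+\sigma_0(\alpha-\mu)-m$ is strictly positive we may further restrict to $\alpha>\mu$ (the case $\alpha=\mu$ giving $j+\alpha-m$, which either is $\leq 0$ or forces $(j,\alpha)\notin I_m$ and is handled verbatim by the variant Proposition~\ref{prop510}). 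Fix now any $(h,\beta)\in\Lambda_{out}$, any $\mu\in\{0,\ldots,M\}$, any $(i,\bnu)\in J_\mu$ and any $(j,\alpha)\in K_\bnu$ with $\alpha\geq\mu$. Exactly as in formula (\ref{e53}), the positivity assumptions c-1), c-3), c-4) allow us to retain only three terms in the expansion of the right-hand side of (\ref{e61}) and to absorb the rest into the Taylor coefficients of the free term, producing a minorant of the shape
\begin{equation*}
    L(t\partial_t,x\partial_x)u \;\gg\; A x^{r}t \;+\; B x^{p_{h,\beta}} (t\partial_t)^{h}[x\partial_x]_{\beta}u \;+\; C x^{\mu} t^{i+|\bnu|-1}(t\partial_t)^{j}\partial_x^{\alpha}u,
\end{equation*}
for suitable $A,B,C>0$ and either $r=m$ (when $(j,\alpha)\in I_m$, using $(\partial_x^m a)(0)>0$) or $r=1$ (otherwise, using $(\partial_x a)(0)>0$ and the $a(x)\gg 0$ positivity of the auxiliary coefficients). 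Replacing $u$ by the formal solution $w(t,x)$ of the associated equation yields $u(t,x)\gg w(t,x)$, and Proposition~\ref{prop56} (case $r=m$) or Proposition~\ref{prop510} (case $r=1$) forces $s-1\geq (j+\alpha+(d_{h,\beta}/p_{h,\beta})(\alpha-\mu)-m)/(i+|\bnu|-1)$ and $\sigma\geq 1+d_{h,\beta}/p_{h,\beta}$. Taking the supremum over all admissible data and reading off the formula for $s_1$ then $s_0=s_1$ gives $s\geq s_0$ and $\sigma\geq\sigma_0$.

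The delicate point, and where I would spend the bulk of the write-up, is the minorant construction for the generalised equation (\ref{e61}): because $\mathcal{M}$ may contain pairs with $j+\alpha>m$ or $\alpha>m$, the index identity $s_0=s_1$ is essential to guarantee that a triple in $\mathcal{M}_\mu$ actually attains the maximum, so that the minorisation of the right-hand side in the $t^{i+|\bnu|-1}$--term only has to use $(j,\alpha)$ with $\alpha\geq\mu$, which is precisely what Propositions~\ref{prop56} and~\ref{prop510} can process. Once that bookkeeping is arranged, plus the mild cosmetic adjustment (using $(\partial_x a)(0)>0$ rather than $(\partial_x^m a)(0)>0$ when reducing to equation (\ref{e523}) in Proposition~\ref{prop510}), everything else is a direct transcription of Section~\ref{secproofmaintheo}.
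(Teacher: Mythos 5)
Your overall route is the paper's own: for Theorem~\ref{teo63} the paper simply repeats the argument of Section~\ref{secproofmaintheo}, i.e.\ c-2) gives $L(k,l)\ge k^m\ge 1$, hence (N) and, via the positivity of the vertex coefficients and Lemma~\ref{lema22}, (GP), so existence and sufficiency come from Theorem~\ref{teo61}; necessity comes from positivity of the formal solution, a three-term minorant as in (\ref{e53}), and the optimality statement for the model equation (Proposition~\ref{prop56}, resp.\ its variant Proposition~\ref{prop510}), the hypothesis $s_0=s_1$ serving exactly the purpose you describe. Two bookkeeping remarks: the dichotomy ``$r=m$ if $(j,\alpha)\in I_m$, $r=1$ otherwise'' is not the relevant one — the forcing term can always be taken as $Ax^mt$ (this is what $(\partial_x^{\,m}a)(0)>0$ in c-1) is for), while the positivity of $(\partial_x^{\,\mu}a)(0)$ up to $\mu=M$ is what makes the constant $C$, built from the factors $(\partial_x^{\gamma}a)(0)/L(1,\gamma)$ with $\gamma\le M$, strictly positive; and since Propositions~\ref{prop56} and~\ref{prop510} are stated under h${}_2$)--h${}_4$) ($0\le\mu<m$, $(j,\alpha)\in I_m$, $\alpha>\mu$), in the generalized setting you must invoke their analogues with $\mu\le M$ and $(j,\alpha)\in{\mathcal M}$; this is routine for the necessity half when $\alpha>\mu$ (sufficiency being replaced by Theorem~\ref{teo61}), but it should be said.

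There is, however, one genuine gap: your parenthetical disposal of the case $\alpha=\mu$. Proposition~\ref{prop510} is not a version of Proposition~\ref{prop56} with h${}_4$) removed; it keeps $\alpha>\mu$ and only replaces $Ax^mt$ by $Axt$ and $[x\partial_x]_{\beta}$ by $(x\partial_x)^{\beta}$, so it cannot ``handle verbatim'' the terms with $\alpha=\mu$. Nor can these terms be discarded: since ${\mathcal M}\not\subset I_m$, a pair with $\alpha=\mu$ and $j+\alpha>m$ may be the unique term realising the maximum in $s_1$. For instance, take the paper's example (\ref{e62}) with $x^3$ replaced by $x^2$ and $a(x)=1+x+x^2$, i.e.\ $t\partial_tu=a(x)t+x(x\partial_x)u+tx^2\partial_x^2u$: all of c-1)--c-4) hold, $\sigma_0=2$ and $s_0=s_1=2$, and the only contributing datum has $\mu=\alpha=2$. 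For such a term the necessity of $s\ge s_0$ is not delivered by the quoted propositions: in Steps 6--8 of the proof of Proposition~\ref{prop56} the $x$-degree is lowered by $\alpha-\mu$ at each application of the $C$-term, and the factorial gain $(lp)!^{\alpha}$ in (\ref{e519}) comes precisely from the factors $n(\alpha-\mu)+d_k-\mu+1$; when $\alpha=\mu$ these factors are bounded, the gain degenerates to a geometric one, and the resulting inequality only yields the vacuous bound $s-1\ge (j-m)/q$. One has to modify the chain: iterate the degree-preserving $C$-step at a large fixed $x$-degree (gaining roughly $l^{\alpha}$ per step from $\partial_x^{\alpha}$, seeded by the high-order coefficients of $w_k$ estimated in Lemma~\ref{lema59}) and balance the number of iterations against that degree. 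This is in the same spirit but is not a verbatim transcription, and as written your argument does not prove $s\ge s_0$ when the maximum in $s_1$ is attained only at $\alpha=\mu$ — a point which, to be fair, the paper's one-line proof of Theorem~\ref{teo63} does not spell out either.
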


In the case $s_0 > s_1$, our index $s_0$ is not optimal in general, as it is seen in the following example.

\vspace{3mm}

\noindent{\bf Example:} Let us consider
\begin{equation}\label{e62}
    t\partial_t u= xt + x(x \partial_x)u + tx^3 \partial_x^2u.
\end{equation}
In this case, we have ${\mathcal M}=\{(0,2) \}$, $\sigma_0=2$, $s_0=2$
and $s_1=1$. Equation (\ref{e62}) has a unique 
formal solution $u(t,x) \in \BC[[t,x]]$ satisfying $u(0,x) \equiv 0$, 
and it belongs to the class $G\{t,x \}_{(1,2)}$.
\begin{proof} We set $u(t,x)=\sum_{k \geq 1}u_k(x)t^k$. Then,  $u_k(x) \in \BC[[x]]$ ($k=1,2,\ldots$) are uniquely 
determined inductively on $k$ by the relations:
$u_1=x+x(x\partial_x)u_1$ and for $k \geq 2$
\begin{equation}\label{e63}
    ku_k = x(x \partial_x)u_k + x^3 \partial_x^2u_{k-1}. 
\end{equation}
In addition, for any $k=1,2,\ldots$ we have $u_k(x) \gg 0$ and 
\begin{equation}\refstepcounter{equation}\label{e64}
     {\mathcal B}_2[u_k](x) \ll \frac{2^{k-1}}{(1-x)^{2k-1}}.\tag*{(\theequation)$_k$}
\end{equation}
This proves that $u(t,x) \in G\{t,x \}_{(1,2)}$. 

The proof of \ref{e64} is as follows. The case $k=1$ is verified
by a direct calculation. Let $k \geq 2$ and suppose that this property holds for $k-1$. Then, we have
\begin{equation}\label{e65}
    {\mathcal B}_2[x^3 \partial_x^2u_{k-1}]
    \ll x^2 \partial_x{\mathcal B}_2[u_{k-1}]
      \ll \frac{2^{k-2} x^2(2k-3)}{(1-x)^{2k-2}}
      \ll \frac{2^{k-2}(2k-3)}{(1-x)^{2k-2}}. 
\end{equation}
Since ${\mathcal B}_2[x(x\partial_x)u_{k}]
            \ll x {\mathcal B}_2[u_{k}]$, by (\ref{e63}) and (\ref{e65}) we have
$$ {\mathcal B}_2[u_{k}]
   \ll \frac{1}{k-x} {\mathcal B}_2[x^3 \partial_x^2u_{k-1}](x)
   \ll \frac{1}{k(1-x)}\frac{2^{k-2}(2k-3)}{(1-x)^{2k-2}}
   \ll  \frac{2^{k-1}}{(1-x)^{2k-1}}.
$$
This proves \ref{e64}.
\end{proof}

%
%  --------------------------------
%        << End of Document >
%  --------------------------------
%
\end{document}